\DeclareMathAlphabet{\mathpzc}{OT1}{pzc}{m}{it}
\newtheorem{thm}{Theorem}[section] 
\newtheorem{corollary}[thm]{Corollary}
\newtheorem{lemma}[thm]{Lemma} 
\newtheorem{Prop}[thm]{Proposition}
\newtheorem{conj}[thm]{Conjecture}
\theoremstyle{definition}
\theoremstyle{remark}
\newtheorem{remqee}[thm]{Remark}
\numberwithin{equation}{section}
\title[Semistable Higgs bundles on elliptic surfaces]{Semistable Higgs bundles on elliptic surfaces}
\author{Ugo Bruzzo$^{\P\S}$ and Vitantonio Peragine$^{\P}$}
\address{\small $^\P$Scuola Internazionale Superiore di Studi Avanzati (SISSA), \\
Via Bonomea 265, 34136 Trieste, Italia\\
$^\S$Departamento de Matem\'atica, Universidade Federal da Para\'iba,  \\ Campus I, Jo\~ao Pessoa, PB, Brazil\\
$^\S$INFN (Istituto Nazionale di Fisica Nucleare), Sezione di Trieste\\
$^\S$IGAP (Institute for Geometry and Physics), Trieste\\
$^\S$Arnold-Regge Center for Algebra, Geometry   and Theoretical Physics, Torino}
\date{19 August 2020; revised 12 November 2020 and 3 January 2021} 
\subjclass[2010]{14F05, 14H60, 14J27, 14J60}
\keywords{Semistable Higgs sheaves, elliptic surfaces, curve semistability}
\thanks{U.B.'s research is partly supported by PRIN ``Geometria delle variet\`a algebriche" {and INdAM-GNSAGA. He 
  is} a member of the VBAC group.}
\begin{document}

\begin{abstract}
We analyze Higgs bundles $(V,\phi)$ on a class of elliptic surfaces $\pi:X\to B$, whose underlying vector bundle $V$ has vertical determinant and is fiberwise semistable. We prove that if the spectral curve of $V$ is reduced, then the Higgs field $\phi$ is \emph{vertical}, while if the bundle $V$ is fiberwise regular with reduced (resp., integral) spectral curve, and if its rank and second Chern number satisfy an inequality involving the genus of $B$ and the degree of the fundamental line bundle of $\pi$ (resp., if the fundamental line bundle is sufficiently ample), then $\phi$ is \emph{scalar}. We apply these results to the problem of characterizing slope-semistable Higgs bundles with vanishing discriminant on the class of elliptic surfaces considered, in terms of the semistability of their pull-backs via maps from arbitrary (smooth, irreducible, complete) curves to $X$.
\end{abstract}

\maketitle 
\setcounter{tocdepth}{1}
\tableofcontents 
\thispagestyle{empty}
\section{Introduction}
One of the aims of the present paper is to study a conjecture about semistable Higgs bundles on elliptic surfaces; in doing so, we also find some structural results about such bundles, which may be of independent interest.

Let $Y$ be a non-singular, projective scheme of dimension $n\geqslant 2$, defined over an algebraically closed field $k$ of characteristic $0$, and let $F$ be a coherent sheaf on $Y$, with rank $r$ and Chern classes $\mathrm{c}_i$. The \emph{discriminant} of $F$ is the characteristic class
$$
\Delta(F):=2r\mathrm{c}_2-(r-1)\mathrm{c}_1^2.
$$
Fix a polarization $H$ on $Y$. The Bogomolov inequality asserts that, if the sheaf $F$ is torsion-free and slope-semistable with respect to the polarization $H$, then its discriminant satisfies
\begin{equation}
\label{bog ineq}
\Delta(F)\cdot  H^{n-2}\geqslant 0.
\end{equation}
This was first proved in \cite{bog}
for locally free sheaves in the case $k=\mathbb{C},n=2$; see  \cite{langer}  for the general case. It is then natural to look for some sort of characterization of slope-semistable sheaves for which the lower bound $0$ for  the left hand side of \eqref{bog ineq} is attained. The main result in this direction is the following theorem, first proved in \cite{nakayama}, and then, independently and with a different proof, in \cite{bruzzo herna} (there the condition $\Delta(F)\cdot H^{n-2}=0$ is replaced by the stronger $\Delta(F)=0$ {in $ H^4(Y,\mathbb Q)$}; assuming $k=\mathbb{C}$, the two {conditions} are in fact equivalent, as soon as $F$ is locally free and slope-semistable with respect to the polarization $H$, as can be proved using Theorem 2 of \cite{simp 2}):

\begin{thm}
\label{caratt}
Let $(Y,H)$ be a \emph{complex} polarized variety. Then, for a locally free sheaf $F$ on $Y$, the following are equivalent:
\begin{enumerate}
\item[(i)] $F$ is slope-semistable and {$\Delta(F) =0$ in $ H^4(Y,\mathbb Q)$;}
\item[(ii)] for each pair $(C,f)$, where $C$ is an irreducible, non-singular, projective curve, and $f:C\to Y$ a morphism, the pull-back $f^\ast F$ of $F$ to $C$ along $f$ is semistable.
\end{enumerate}
\end{thm}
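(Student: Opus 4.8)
The plan is to run both implications through the endomorphism bundle $\mathrm{End}(F)=F\otimes F^\vee$, which has $\mathrm c_1=0$ and satisfies the Chern-character identity $\Delta(\mathrm{End}(F))=2r^2\,\Delta(F)$; in particular $\Delta(F)=0$ in $H^4(Y,\mathbb Q)$ if and only if $\Delta(\mathrm{End}(F))=0$. Two further facts will be used freely. First, over $\mathbb C$ the operation $E\mapsto\mathrm{End}(E)$ both preserves and reflects semistability on a smooth projective curve, since $\mathrm{End}(E)$ is semistable exactly when $\mu_{\max}(E)=\mu_{\min}(E)$ (using $\mu_{\max}(A\otimes B)=\mu_{\max}(A)+\mu_{\max}(B)$ for bundles on a curve). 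Second, and crucially, the equivalence---valid over $\mathbb C$---that a locally free sheaf $G$ with $\mathrm c_1(G)=0$ is slope-semistable with $\Delta(G)=0$ in $H^4$ if and only if $G$ is \emph{numerically flat}, i.e.\ both $G$ and $G^\vee$ are nef. For the ``only if'' direction, the Jordan--Hölder factors of such a $G$ inherit vanishing $\mathrm c_1$ and $\Delta$ (Hodge index theorem applied to the Bogomolov inequality for each factor) and hence, carrying a Hermite--Einstein metric forced to be flat and unitary by the equality case of Bogomolov's inequality, are Hermitian flat; then $G$ and $G^\vee$, being iterated extensions of Hermitian flat bundles, are nef. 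The ``if'' direction is the Demailly--Peternell--Schneider structure theorem: a numerically flat bundle is a successive extension of Hermitian flat bundles, whose rational Chern classes vanish in positive degree.

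Granting this, (i)$\,\Rightarrow\,$(ii) is immediate. If $F$ is slope-semistable with $\Delta(F)=0$, then $G:=\mathrm{End}(F)$ is slope-semistable (tensor products of $\mu$-semistable sheaves are $\mu$-semistable in characteristic zero), has $\mathrm c_1(G)=0$ and $\Delta(G)=2r^2\Delta(F)=0$, hence is numerically flat. For any morphism $f\colon C\to Y$ from a smooth projective curve, $f^\ast G$ is again numerically flat---nef bundles pull back to nef bundles, and $\mathrm c_1$ stays zero---and a numerically flat bundle on a curve is precisely a semistable bundle of degree $0$. Thus $\mathrm{End}(f^\ast F)=f^\ast G$ is semistable, and since $\mathrm{End}$ reflects semistability on curves, $f^\ast F$ is semistable.

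For (ii)$\,\Rightarrow\,$(i), first observe that (ii) forces $F$ to be slope-semistable with respect to $H$: a subsheaf $F'\subset F$ with $\mu_H(F')>\mu_H(F)$ would, after restriction to a general complete intersection curve $C$ cut out by $n-1$ members of $|mH|$ with $m\gg0$ (only the elementary direction of Mehta--Ramanathan is needed), yield $F'|_C\subset F|_C$ of strictly larger slope, contradicting the semistability of $F|_C$. Since over $\mathbb C$ the endomorphism bundle of a semistable bundle on a curve is semistable, $G:=\mathrm{End}(F)$ is curve-semistable: for every $f\colon C\to Y$ the bundle $f^\ast G=\mathrm{End}(f^\ast F)$ is semistable of degree $0$, so every quotient line bundle of $f^\ast G$ has nonnegative degree; as this holds for all $f$, the tautological line bundle on $\mathbb P(G)$ is nef, that is, $G$ is nef. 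The same argument applied to $G^\vee=\mathrm{End}(F)^\vee$ (whose pullbacks are duals of semistable degree-$0$ bundles, hence again semistable of degree $0$) shows $G^\vee$ is nef, so $G$ is numerically flat. By the Demailly--Peternell--Schneider theorem the rational Chern classes of $G$ in positive degree vanish, so $\Delta(\mathrm{End}(F))=0$ in $H^4(Y,\mathbb Q)$ and therefore $\Delta(F)=0$.

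The main obstacle is the transcendental equivalence between ``slope-semistable with $\mathrm c_1=\Delta=0$'' and ``numerically flat'': its two halves rest respectively on the Kobayashi--Hitchin correspondence together with the equality case of the Bogomolov inequality, and on the full Demailly--Peternell--Schneider analysis of numerically effective bundles, neither of which is available over an arbitrary algebraically closed field of characteristic zero---this is precisely why Theorem \ref{caratt} is stated for complex $Y$. Everything else---the Chern-class bookkeeping, the behaviour of nefness under pullback and extensions, the identification of numerical flatness on a curve with degree-$0$ semistability, and the elementary theory of Harder--Narasimhan filtrations---is formal.
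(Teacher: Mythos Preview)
The paper does not actually prove Theorem~\ref{caratt}; it is quoted as a known result with references to \cite{nakayama} and \cite{bruzzo herna}, so there is no ``paper's own proof'' to compare against. Your argument is correct and self-contained, and in spirit it is precisely the approach of \cite{bruzzo herna}: reduce to the endomorphism bundle $G=\mathit{End}(F)$ (which has $\mathrm c_1=0$ and $\Delta(G)=2r^2\Delta(F)$), and then use the transcendental equivalence, over $\mathbb C$, between ``slope-semistable with $\mathrm c_1=\Delta=0$'' and ``numerically flat'', the latter being stable under pullback and equivalent on a curve to ``semistable of degree $0$''. The Demailly--Peternell--Schneider structure theorem supplies the vanishing of Chern classes for the converse direction. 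One minor simplification you could note: $\mathit{End}(F)$ is self-dual via the trace pairing, so once $G$ is nef you get $G^\vee$ nef for free. Your invocation of the easy direction of Mehta--Ramanathan (restriction to a general complete-intersection curve) to deduce semistability of $F$ from curve-semistability is also exactly how the paper phrases this step in the Higgs setting (see the discussion after Conjecture~\ref{conjecture}).
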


One also has a version of Bogomolov inequality for Higgs sheaves: again, let $(Y,H)$ be a polarized variety of dimension $n\geqslant 2$, defined over an algebraically closed field $k$ of characteristic $0$. Then any torsion-free, slope-semistable Higgs sheaf $(F,\phi)$ on $(Y,H)$ satisfies $\Delta(F)\cdot  H^{n-2}\geqslant 0$. It was first proved by Simpson in \cite{simp 1} in the case $k=\mathbb{C}$ (and   for stable holomorphic Higgs bundles on compact K\"ahler manifolds), using his \emph{generalized Kobayashi-Hitchin correspondence};\footnote{Simpson indeed proved the inequality only for stable Higgs bundles; the semistable case follows from the generalized (approximate) Hitchin-Kobayashi correspondence for \emph{semistable} Higgs bundles \cite{62, li}.} see again \cite{langer} for the general case. It is then somehow natural to expect the following analogue of Theorem \ref{caratt} to hold \cite{bruzzo herna, bruzzo beatriz}:

\begin{conj}
\label{conjec}
Let $(Y,H)$ be a \emph{complex} polarized variety, and let $(F,\phi)$ be a Higgs bundle on $Y$. Then the following are equivalent:
\begin{enumerate}
\item[(i)] $(F,\phi)$ is semistable with vanishing discriminant;
\item[(ii)] for each morphism $f:C\to Y$, with $C$ and $f$ as in Theorem \ref{caratt}, the pull-back $f^\ast(F,\phi)$
 is semistable.
\end{enumerate}
\end{conj}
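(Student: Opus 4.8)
The plan is to reduce the statement, in both directions, to a positivity property of the \emph{endomorphism Higgs bundle} together with Simpson's nonabelian Hodge correspondence. Given $(F,\phi)$, write $(\mathcal{E}nd(F),\mathrm{ad}\,\phi)$ for the Higgs bundle of endomorphisms, with Higgs field $\mathrm{ad}\,\phi=[\phi,-]$; one has $c_1(\mathcal{E}nd(F))=0$ and $c_2(\mathcal{E}nd(F))=\Delta(F)$ in $H^4(Y,\mathbb Q)$, so the two invariants controlling condition (i) are both encoded in $(\mathcal{E}nd(F),\mathrm{ad}\,\phi)$. Since the tensor product and the dual of slope-semistable Higgs sheaves are again slope-semistable in characteristic $0$, semistability of $(F,\phi)$ is equivalent to that of $(\mathcal{E}nd(F),\mathrm{ad}\,\phi)$, and for every $f\colon C\to Y$ the pull-back $f^\ast(F,\phi)$ is semistable if and only if $f^\ast(\mathcal{E}nd(F),\mathrm{ad}\,\phi)=(\mathcal{E}nd(f^\ast F),\mathrm{ad}\,f^\ast\phi)$ is. Thus I may work throughout with $(\mathcal{E}nd(F),\mathrm{ad}\,\phi)$, a Higgs bundle of vanishing first Chern class.

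For (i) $\Rightarrow$ (ii) I would invoke the structure theory of semistable Higgs bundles with vanishing discriminant. If $(F,\phi)$ is semistable with $\Delta(F)=0$, then $(\mathcal{E}nd(F),\mathrm{ad}\,\phi)$ is semistable with $c_1=0$ and $c_2=0$; by the Higgs analogue of the theory of numerically flat bundles (developed by Bruzzo and Gra\~na Otero) together with Simpson's correspondence, such a Higgs bundle is \emph{Higgs numerically flat}, i.e. it admits a filtration by Higgs subbundles whose graded pieces are stable Higgs bundles with vanishing Chern classes, hence correspond to irreducible representations of $\pi_1(Y)$. Numerical flatness is preserved under pull-back, so for each $f\colon C\to Y$ the graded pieces of $f^\ast(\mathcal{E}nd(F),\mathrm{ad}\,\phi)$ are flat, in particular semistable of degree $0$; an iterated extension of semistable degree-$0$ Higgs bundles on a curve is semistable of degree $0$, and reversing the endomorphism reduction yields that $f^\ast(F,\phi)$ is semistable.

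For (ii) $\Rightarrow$ (i) I would treat semistability and the vanishing of the discriminant separately. Semistability should follow from a Higgs version of the Mehta--Ramanathan restriction theorem: if $(F,\phi)$ were not semistable, its maximal destabilizing subsheaf is automatically $\phi$-invariant and, by restriction to a general complete-intersection curve of large degree, would survive to destabilize $f^\ast(F,\phi)$, contradicting (ii). For the discriminant, the Bogomolov inequality for semistable Higgs sheaves \cite{simp 1,langer} gives $\Delta(F)\cdot H^{n-2}\geqslant 0$, and the aim is to promote curve-semistability into the equality $\Delta(F)\cdot H^{n-2}=0$, after which Theorem~2 of \cite{simp 2}, exactly as in the remark preceding Theorem~\ref{caratt}, upgrades it to $\Delta(F)=0$ in $H^4(Y,\mathbb Q)$. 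To obtain the equality I would restrict to a general smooth surface $S=H_1\cap\cdots\cap H_{n-2}$, on which $\Delta(F|_S)=\Delta(F)\cdot H^{n-2}$, and try to show that a curve-semistable Higgs bundle on a surface with $\Delta>0$ must admit a curve along which the pull-back fails to be Higgs-semistable.

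This last step is where I expect the genuine obstacle to lie, and it is precisely what distinguishes the present conjecture from Theorem~\ref{caratt}. In the classical case one detects positivity on curves through the endomorphism bundle: curve-semistability forces $\mathcal{E}nd(F)$ to be nef, hence numerically flat, hence $\Delta(F)=0$. In the Higgs setting a destabilizing subsheaf produced on a curve by a Bogomolov-type argument need not be $\phi$-invariant, so Higgs-curve-semistability is a \emph{strictly weaker} hypothesis than semistability of all pull-backs of the underlying bundle, and the Higgs field may obstruct the construction of a Higgs-destabilizing curve even when $\Delta(F)>0$; equivalently, the difficulty is to show that the ``Higgs nef-ness'' of $(\mathcal{E}nd(F),\mathrm{ad}\,\phi)$ can be detected on curves, and without control on $\phi$ this is exactly the open part of the conjecture. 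This is what motivates the strategy of the present paper for elliptic surfaces: by first proving that the Higgs field is forced to be \emph{vertical}, and under suitable hypotheses \emph{scalar}, one decouples $\phi$ from the destabilization problem and reduces the Higgs statement to the already established characterization of Theorem~\ref{caratt} for the underlying bundle.
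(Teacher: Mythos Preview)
The statement you were asked to prove is a \emph{conjecture} in the paper, not a theorem; the paper does not contain a proof of it, and indeed presents it as open in general. Your proposal is therefore not to be compared against a proof in the paper but against the paper's discussion of what is known and what remains open, and on that score your analysis is accurate. You correctly attribute (i)~$\Rightarrow$~(ii) to the theory of Higgs-numerically-flat bundles (this is what the paper cites as \cite{bruzzo herna, bruzzo beatriz}), and you correctly identify that the semistability part of (ii)~$\Rightarrow$~(i) follows from the Higgs Mehta--Ramanathan theorem \cite{simp 2}, exactly as the paper states. Most importantly, you correctly isolate the genuine gap: deducing $\Delta(F)=0$ from curve-semistability. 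Your explanation of \emph{why} this is hard --- that a destabilizing subsheaf produced on a curve by a Bogomolov-type argument need not be $\phi$-invariant, so Higgs-curve-semistability is strictly weaker than curve-semistability of the underlying bundle --- is precisely the obstruction, and your final paragraph accurately summarizes the paper's strategy of forcing $\phi$ to be scalar in order to collapse the Higgs problem to Theorem~\ref{caratt}.

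One further point of contact worth noting: your reduction to the endomorphism Higgs bundle $(\mathit{End}(F),\mathrm{ad}\,\phi)$, with $c_1=0$ and $c_2=\Delta(F)$, is essentially the content of the paper's Proposition~\ref{Prop 56}, which uses the same tensor/dual argument to reduce the conjecture on an elliptic surface to the case of Higgs bundles with trivial determinant that are fiberwise semistable. So your framework is not only correct as far as it goes, but also anticipates the first reduction step the paper actually carries out.
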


The implication  (i) $\Rightarrow$ (ii) of \ref{conjec} was proved in \cite{bruzzo herna, bruzzo beatriz}. Moreover, by the Higgs version of Mehta-Ramanathan theorem \cite{simp 2}, any Higgs bundle satisfying (ii)  of {Conjecture} \ref{conjec} is semistable. So, what is left to be proved in order to establish the validity of the conjecture, is the statement that a Higgs bundle with non-zero discriminant is unstable when pulled back to a suitable curve. We remark that Theorem \ref{caratt} is true, more generally, for reflexive sheaves \cite{nakayama}. Thus one may formulate a \emph{reflexive version} of Conjecture \ref{conjec}. {However}, in this paper we shall work with smooth surfaces, where there is no distinction between vector bundles and reflexive sheaves, and so we will stick to the locally free case.

Conjecture \ref{conjec} is by now known to be true for several classes of varieties, including those with nef tangent bundle \cite{bruzzo alessio}, K3 surfaces \cite{bruzzo valeriano}, and, more generally, Calabi-Yau varieties \cite{bruzzo capasso}. In this paper we study Conjecture \ref{conjec} in the case of elliptic surfaces. 
We restrict ourselves to the case of non-isotrivial Weierstrass fibrations $\pi:X\to B$ without cuspidal singular fibers. Generalizations (e.g., to elliptic surfaces without sections, or with multiple fibers) will be treated elsewhere. Using, among other things, the equivalence of ordinary and Higgs-semistability for Higgs bundles on curves which are either rational or elliptic \cite{bruzzo alessio,garcia-prada}, we remark (Proposition \ref{Prop 56}) that it is enough to prove the conjecture for Higgs bundles $(V,\phi)$ on $X$ whose underlying vector bundle $V$ has vertical (or even trivial) determinant, and has semistable restriction to the closed fibers of $\pi$. This allows us to focus our attention on Higgs bundles $(V,\phi)$ on $X$, with $V$ assumed to have vertical determinant, and to be fiberwise semistable. As shown by Morgan, Friedman and Witten in \cite{mfw}, it is possible to associate to such a $V$ an effective divisor  $C_V$ on $X$, called the \emph{spectral curve} of $V$, belonging to the the linear system $\left|r\Sigma+\pi^\ast\mu\right|$. Here $\Sigma$ is the identity section of $\pi:X\to B$, $r$ is the rank of $V$, and $\mu$ is a suitable line bundle on $B$, whose degree equals the second Chern number of $V$. In the fiberwise regular case, the degree $r$ cover $C_V\hookrightarrow X\xrightarrow{\pi}B$ determines (the isomorphism class of) $V$, up to the choice of an invertible sheaf on $C_V$.

The cotangent bundle of $\Omega_X$ possesses a distinguished invertible subsheaf, i.e., the pull-back along $\pi$ of the canonical line bundle of the base curve $B$. We call Higgs fields on $V$ factoring through the inclusion $V\otimes\pi^\ast\omega_B\hookrightarrow V\otimes\Omega_X$  \emph{vertical}; by analyzing the restrictions of $V$ and $\Omega_X$ to a general closed fiber of $\pi$,
we show (Proposition \ref{propo} and Corollary \ref{corollll}) that, when the spectral curve of $V$ is reduced, these are the only Higgs fields which $V$ supports.

Every vector bundle on a smooth variety has a canonical family of Higgs fields on it, which we call \emph{scalar}, parametrized by the space of global $1$-forms on the variety. Using a Lemma on the relative incidence correspondence of divisors of the form $r\Sigma$ on the total space of our fibration $\pi:X\to B$ (Lemma \ref{il lemma bla bla bla}), we show (Proposition \ref{uno dei risultati}) that if $V$ is fiberwise regular, with reduced (resp., integral) spectral curve, and if the rank $r\geqslant 2$ and second Chern number $e$ of $V$ satisfy the inequality $e\geqslant (r-1)d+2g-1$ (resp., if $d\geqslant 2g-1$), where $d\geqslant 1$ is the degree of the fundamental line bundle of $\pi$, and $g$ the genus of $B$, then every Higgs field on $V$ is necessarily scalar; in particular, for every Higgs field $\phi$ on $V$, the Higgs bundle $(V,\phi)$ (resp., any of its pull-backs $f^\ast(V,\phi)$ along some morphism $f:Y\to X$) is semistable if and only if the vector bundle $V$ (resp., its pull-back $f^\ast V$) is.

Our results on Higgs bundles on elliptic surfaces allow us to prove (Proposition \ref{conj rel}) that Conjecture \ref{conjec} is true for Higgs bundles $(V,\phi)$ on $X$ {when  $V$ is} fiberwise semistable and {has} vertical determinant, as soon as the the rank $r$ and second Chern number $e$ of $V$ satisfy the inequality $e\geqslant rd+2g$, and the spectral curve of $V$ is sufficiently general.

The content of the paper is as follows: in Section \ref{uno} we make precise the class of elliptic surfaces we shall work with, and we prove a result (Proposition \ref{le potenze}) concerning the direct images of the (co)tangent sheaf of an elliptic surface belonging to this class (which will be interpreted in Section \ref{3} in terms of Hitchin bases); in Section \ref{due} we recall, following \cite{mfw}, the construction of the spectral cover of a fiberwise semistable vector bundle with vertical determinant on a Weierstrass fibration;
in Section \ref{3} we prove our results on the structure of Higgs fields on fiberwise semistable (resp., regular) bundles with vertical determinant on elliptic surfaces; finally, in Section \ref{ultima sezione}, we apply the results of Section \ref{3} to the study of Conjecture \ref{conjec}.

	  \subsection*{Acknowledgments} The second author wishes to thank the Department of Mathematics of Universidade Federal da Para\'iba, Jo\~ao Pessoa,  for hospitality; and Valeriano Lanza for the invitation to give a talk on a preliminary version of {this} paper at Universidade Federal Fluminense, Rio de Janeiro.

	  \subsection*{Notations and conventions}
We work over an algebraically closed field $k$ of characteristic $0$ (though at some point in Section \ref{ultima sezione} we specialize to the case $k=\mathbb{C}$). The word \emph{scheme} without further qualifiers will mean {\emph{separated $k$-scheme of finite-type}}.  Coherent sheaves of modules on a scheme $S$ will be referred to simply as sheaves; the dimensions (as $k$-vector spaces) of the cohomology groups (with respect to the Zariski topology) $ H^i(S,F)$ of a sheaf $F$ on $S$ will be denoted by $ h^i(S,F)$.  For a sheaf $F$ on $S$, 
we sometimes denote $F\otimes k(x)$ as $F(x)$, if $x$ is a point of $S$ (and $k(x)$ its residue field), and $F\otimes\mathcal{O}_S(D)$ as $F(D)$, if $D$ is a Cartier divisor on $S$. For a global section $s$ of $F$, $\mathrm{Z}(s)$ is the scheme of zeros of $s$.
Sheaves of K\"ahler differentials are denoted by $\Omega $, and their duals by $\Theta $; for a morphism of schemes $f:T\to S$, the morphism of structure sheaves $\mathcal{O}_S\to f_\ast\mathcal{O}_T$, and the canonical map $f^\ast\Omega_S\to\Omega_T$ will both be denoted by $f^\ast$; if $x\in S$ is any point, the scheme-theoretic fiber of $f$ {at} $x$ will be denoted by $T_x$, and the pull-back of a sheaf $F$ {to} $T$ via the canonical morphism $T_x\to T$ by $F_x$.
 For a locally free sheaf $V$ on a scheme $S$, we set $\mathbb{P}_S(V):=\mathrm{Proj}_S(\mathrm{Sym}V^\vee)$.
 
 \medskip
\section{Elliptic surfaces}
\label{uno}

\subsection{Elliptic fibrations}

Let $B$ be an integral scheme, with generic point $\eta$ and function field $K$. An \emph{elliptic fibration over $B$} is a proper and flat $B$-scheme $X\to B$, whose geometric generic fiber $X_\eta\times_KK^{\mathrm{al}}$ is an irreducible, complete, non-singular $K^{\mathrm{al}}$-curve of genus one, for some (hence, for any) algebraic closure $K^{\mathrm{al}}$ of $K$.
\emph{We assume, from now on, that the base and the total space of our elliptic fibrations are \emph{non-singular}}.

\subsubsection{Sections}

Let $\pi:X\to B$ be an elliptic fibration. A \emph{section} of $\pi$ is a closed subscheme  $\Sigma$ of $X$ such that the restriction of $\pi$ to $\Sigma$ is an isomorphism $\Sigma\to B$, or, equivalently, a morphism $\sigma:B\to X$ satisfying $\pi\circ\sigma=\mathrm{id}_B$.

Let $\Sigma$ be a section of $\pi$. Then $\Sigma$ intersects each closed fiber $X_b$ of $\pi$ transversely in the smooth point $\sigma(b)$; moreover, $\Sigma$ is a (non-singular, prime) divisor of $X$, and thus its conormal sheaf is invertible (and canonically isomorphic to $\mathcal{O}_\Sigma(-\Sigma):=\left.\mathcal{O}_X(-\Sigma)\right|_\Sigma$). The image in $\mathrm{Pic}(B)$ of $\mathcal{O}_\Sigma(-\Sigma)$ via the isomorphism $\pi|_\Sigma:\Sigma\to B$ (or, equivalently, the conormal sheaf of the closed immersion $\sigma:B\to X$) will be denoted by $\mathbb{L}$, and called the \emph{fundamental line bundle} of the fibration.  It turns out that $\mathbb{L}$ is isomorphic to $(\mathrm{R}^1\pi_\ast\mathcal{O}_X)^{\vee}$; in particular, it does not depend on the chosen section. We recall the following well-known {fact} \cite{miranda, libro friedman}:

\begin{Prop}
\label{immagini multipli sezione}
Let $\pi:X\to B$ be an elliptic fibration with section $\Sigma$. Then:
\begin{enumerate}
\item[(i)]  the map $\pi^\ast:\mathcal{O}_B\to\pi_\ast\mathcal{O}_X$ is an isomorphism;
\item[(ii)] for each non-zero integer $r$, the sheaf $\pi_\ast\mathcal{O}_X(r\Sigma)$ on $B$ is $0$ for $r<0$, while it is (non-canonically) isomorphic to
$$
\mathbb{L}^{-2}\oplus\cdots\oplus\mathbb{L}^{-r}\oplus\mathcal{O}_B
$$
for $r>0$.
\end{enumerate}

\end{Prop}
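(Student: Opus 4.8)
The plan is to treat (i), then the case $r<0$ of (ii), then the case $r>0$; the first two are essentially formal. Since $X$ is integral and $\pi$ is dominant, $\pi_\ast\mathcal{O}_X(r\Sigma)$ is a torsion-free $\mathcal{O}_B$-module for every $r$, and by flat base change to the generic point its generic rank equals $\dim H^0$ of the restriction of $\mathcal{O}_X(r\Sigma)$ to the generic fibre $X_\eta$; the latter is a line bundle of degree $r$ on the geometrically integral genus-one curve $X_\eta$, so this rank is $0$ for $r<0$, $1$ for $r=0$, and $r$ for $r>0$. Consequently $\pi_\ast\mathcal{O}_X(r\Sigma)$ is torsion-free of rank $0$, hence zero, when $r<0$, which is the first part of (ii). For (i), the section $\sigma$ provides a left inverse $\sigma^\ast$ of $\pi^\ast\colon\mathcal{O}_B\to\pi_\ast\mathcal{O}_X$, so $\pi_\ast\mathcal{O}_X\cong\mathcal{O}_B\oplus Q$ with $Q$ torsion-free of rank $0$; hence $Q=0$ and $\pi^\ast$ is an isomorphism.

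For the case $r>0$ of (ii) I would first reduce to the case of a Weierstrass fibration. In general $X$ dominates its relative Weierstrass model $\bar X=\mathrm{Proj}_B\bigl(\bigoplus_{r\geqslant0}\pi_\ast\mathcal{O}_X(r\Sigma)\bigr)$ via a birational $B$-morphism $\rho\colon X\to\bar X$ which is an isomorphism near $\bar\Sigma:=\rho(\Sigma)$ and contracts to rational double points exactly the fibre components disjoint from $\Sigma$; hence $\rho_\ast\mathcal{O}_X=\mathcal{O}_{\bar X}$, $R^{>0}\rho_\ast\mathcal{O}_X=0$, and $\rho^\ast\mathcal{O}_{\bar X}(\bar\Sigma)=\mathcal{O}_X(\Sigma)$ (as $\bar\Sigma$ avoids the exceptional locus), so the projection formula together with the Leray spectral sequence give $\pi_\ast\mathcal{O}_X(r\Sigma)\cong\bar\pi_\ast\mathcal{O}_{\bar X}(r\bar\Sigma)$ and $R^1\pi_\ast\mathcal{O}_X(r\Sigma)\cong R^1\bar\pi_\ast\mathcal{O}_{\bar X}(r\bar\Sigma)$. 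Now every closed fibre of $\bar\pi$ is an integral plane cubic, hence a Gorenstein curve of arithmetic genus one met transversely in one point by $\bar\Sigma$, so Serre duality and Riemann--Roch give $h^1=0$ and $h^0=r$ for the restriction of $\mathcal{O}_{\bar X}(r\bar\Sigma)$ to each fibre when $r\geqslant1$; by Grauert's theorem $\bar\pi_\ast\mathcal{O}_{\bar X}(r\bar\Sigma)$ is then locally free of rank $r$ and $R^1\bar\pi_\ast\mathcal{O}_{\bar X}(r\bar\Sigma)=0$ (and, for $r=0$, $R^1\bar\pi_\ast\mathcal{O}_{\bar X}\cong\mathbb{L}^{-1}$, in accordance with the description of $\mathbb{L}$ recalled before the statement). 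Pushing forward the exact sequences $0\to\mathcal{O}_{\bar X}((r-1)\bar\Sigma)\to\mathcal{O}_{\bar X}(r\bar\Sigma)\to\mathcal{O}_{\bar\Sigma}(r\bar\Sigma)\to0$, and using this $R^1$-vanishing together with the fact that the conormal sheaf of $\bar\Sigma$ is $\mathbb{L}$ (so $\bar\pi_\ast\mathcal{O}_{\bar\Sigma}(r\bar\Sigma)\cong\mathbb{L}^{-r}$), one gets $\bar\pi_\ast\mathcal{O}_{\bar X}(\bar\Sigma)=\mathcal{O}_B$ and short exact sequences $0\to\bar\pi_\ast\mathcal{O}_{\bar X}((r-1)\bar\Sigma)\to\bar\pi_\ast\mathcal{O}_{\bar X}(r\bar\Sigma)\to\mathbb{L}^{-r}\to0$ for $r\geqslant2$. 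Inductively on $r$, $\pi_\ast\mathcal{O}_X(r\Sigma)$ then admits a filtration whose successive quotients are $\mathcal{O}_B,\mathbb{L}^{-2},\mathbb{L}^{-3},\dots,\mathbb{L}^{-r}$.

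The remaining point --- that this filtration splits --- is where I expect the main difficulty to lie, for it is not a formal consequence of the above: the obstruction to splitting off $\mathbb{L}^{-r}$ at the last stage sits in $\bigoplus_j\mathrm{Ext}^1(\mathbb{L}^{-r},\mathbb{L}^{-j})=\bigoplus_j H^1(B,\mathbb{L}^{r-j})$, and already $H^1(B,\mathbb{L})$ can be non-zero when $\deg\mathbb{L}\leqslant 2g-2$. One has instead to exploit the explicit Weierstrass presentation of $\bar X$ as a Weierstrass cubic inside a $\mathbb{P}^2$-bundle over $B$, cut out by a global Weierstrass equation with coefficients in suitable powers of $\mathbb{L}$: the homogeneous coordinates of the bundle restrict on $\bar X$ to the Weierstrass functions $x\in H^0\bigl(\bar X,\mathcal{O}_{\bar X}(2\bar\Sigma)\otimes\bar\pi^\ast\mathbb{L}^2\bigr)$ and $y\in H^0\bigl(\bar X,\mathcal{O}_{\bar X}(3\bar\Sigma)\otimes\bar\pi^\ast\mathbb{L}^3\bigr)$, of pole order exactly $2$, respectively $3$, along $\bar\Sigma$, and for each $r\geqslant2$ the unique monomial $x^ay^b$ with $2a+3b=r$ and $b\in\{0,1\}$ has pole order exactly $r$ and lies in $H^0\bigl(\bar X,\mathcal{O}_{\bar X}(r\bar\Sigma)\otimes\bar\pi^\ast\mathbb{L}^r\bigr)$, hence splits off the quotient $\mathbb{L}^{-r}$ (with $1$ accounting for $\mathcal{O}_B$). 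This identifies $\bigoplus_{r\geqslant0}\bar\pi_\ast\mathcal{O}_{\bar X}(r\bar\Sigma)$ with the graded $\mathcal{O}_B$-algebra $\mathcal{O}_B[x,y]/(\text{Weierstrass relation})$ ($\deg x=2$, $\deg y=3$), whence the asserted form of $\pi_\ast\mathcal{O}_X(r\Sigma)$ degree by degree. The construction of the Weierstrass model and the verification of these facts are classical, and for the details I would refer to \cite{miranda, libro friedman}.
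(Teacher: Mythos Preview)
The paper does not supply its own proof of this proposition: it is stated as a ``well-known fact'' with references to \cite{miranda, libro friedman}, and no argument is given. Your proposal is a correct and detailed sketch of precisely the standard proof found in those references --- in particular the reduction to the Weierstrass model, the inductive filtration obtained from the exact sequences $0\to\mathcal{O}((r-1)\bar\Sigma)\to\mathcal{O}(r\bar\Sigma)\to\mathcal{O}_{\bar\Sigma}(r\bar\Sigma)\to 0$, and the splitting via the Weierstrass monomials $x^ay^b$ --- and you yourself cite the same sources for the fine points. So there is nothing to compare: your write-up \emph{is} the omitted proof, and it is sound.
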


\subsection{Weierstrass fibrations}

A \emph{Weierstrass fibration} is a pair $(\pi,\Sigma)$, where $\pi$ is an elliptic fibration \emph{with integral fibers}, and $\Sigma$ is a section of $\pi$. 

Let $\mathbb{L}$ be the fundamental line bundle of a Weierstrass fibration $(\pi:X\to B,\Sigma)$. By the second item of Proposition \ref{immagini multipli sezione}, the sheaf $\pi_\ast\mathcal{O}_X(3\Sigma)$ is isomorphic to 
$$
\mathbb{L}^{-2}\oplus\mathbb{L}^{-3}\oplus\mathcal{O}_B=:\mathcal{S}_3.
$$
Set $\mathbb{P}:=\mathbb{P}_B(\mathcal{S}_3^\vee)$, and denote by $p:\mathbb{P}\to B$ the projection. Next, observe that
$$
 H^0(\mathbb{P},\mathcal{O}_\mathbb{P}(3)\otimes p^\ast\mathbb{L}^6)= H^0(B,\mathbb{L}^6\otimes\mathrm{Sym}^3\mathcal{S}_3)\simeq\bigoplus_{i+j+k=3} H^0(B,\mathbb{L}^{6-2i-3j})x^iy^jz^k.
$$
The last vector space contains
$$
 H^0(\mathcal{O}_B)y^2z\oplus H^0(\mathcal{O}_B)x^3\oplus H^0(\mathbb{L}^4)xz^2\oplus H^0(\mathbb{L}^6)z^3.
$$
It follows that, for each pair $(a_4,a_6)\in H^0(\mathbb{L}^4)\oplus H^0(\mathbb{L}^6)$, the expression
\begin{equation}
\label{wei equa}
y^2z-(x^3+a_4xz^2+a_6z^3)
\end{equation}
can be interpreted as a non-zero section of the line bundle $\mathcal{O}_\mathbb{P}(3)\otimes p^\ast\mathbb{L}^6$ on $\mathbb{P}$; thus its scheme of zeroes is an effective divisor $\bar X $ on $\mathbb{P}$. Moreover, if the \emph{discriminant} section
\begin{equation}
\label{il discri}
\Delta:=-16(4a_4^3+27a_6^2)\in H^0(\mathbb{L}^{12})
\end{equation}
is non-zero, then the pair $(\bar{\pi},\bar{\sigma})$, where $\bar{\pi}:\bar X \to B$ is the composition
\begin{equation}
\label{can w fib}
\bar X \hookrightarrow\mathbb{P}\xrightarrow{p}B,
\end{equation}
while $\bar{\sigma}:B\to\bar X $ is obtained by factoring, via $\bar X \hookrightarrow \mathbb{P}$, the section of $p$ corresponding to the natural map $\mathcal{S}_3\twoheadrightarrow\mathbb{L}^{-3}$, is a Weierstrass fibration on $B$. The divisor $\bar X $ 
may be singular, but a local computation shows that it is actually non-singular provided the divisor of the discriminant section is reduced. Conversely, one has the following result \cite{miranda, libro friedman}:

\begin{Prop}
\label{sull w fib}
Let $(\pi:X\to B,\Sigma)$ be a Weierstrass fibration, and $\mathbb{L}$ its fundamental line bundle. Then the divisor $3\Sigma$ on $X$ is very ample relatively to $\pi$, and there exists an isomorphism $\pi_\ast\mathcal{O}_X(3\Sigma)\simeq\mathbb{L}^{-2}\oplus\mathbb{L}^{-3}\oplus\mathcal{O}_B=:\mathcal{S}_3$, such that the image of the immersion of $B$-schemes $X\to\mathbb{P}_B(\mathcal{S}_3^\vee)$ corresponding to the surjection
$$
\pi^\ast \mathcal{S}_3\xrightarrow{\simeq}\pi^\ast\pi_\ast\mathcal{O}_X(3\Sigma)\xrightarrow{\mathrm{ev}}\mathcal{O}_X(3\Sigma)
$$
is the divisor of zeros of \eqref{wei equa} for suitable $a_i\in H^0(\mathbb{L}^i)$, $i=4,6$. Moreover, the pair $(a_4,a_6)$ is well defined up to the $k^\times $-action $c\cdot(a_4,a_6):=(c^4a_4,c^6a_6)$.

\end{Prop}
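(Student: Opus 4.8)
The plan is to leverage Proposition \ref{immagini multipli sezione} to pin down both $\pi_\ast\mathcal{O}_X(3\Sigma)$ and the embedding $X\hookrightarrow\mathbb{P}_B(\mathcal{S}_3^\vee)$, and then to extract the Weierstrass form from the structure of the graded algebra $\bigoplus_{r\geqslant 0}\pi_\ast\mathcal{O}_X(r\Sigma)$. First I would establish relative very ampleness of $3\Sigma$: since the fibers are integral genus-one curves and $\Sigma$ meets each fiber in a single smooth point, $\mathcal{O}_{X_b}(3\sigma(b))$ has degree $3$ and is very ample on each fiber; combined with the base-change isomorphism coming from $R^1\pi_\ast\mathcal{O}_X(3\Sigma)=0$ (which follows because $h^1$ vanishes fiberwise for a degree-$3$ line bundle on a genus-one curve), this gives that the evaluation $\pi^\ast\pi_\ast\mathcal{O}_X(3\Sigma)\to\mathcal{O}_X(3\Sigma)$ is surjective and defines a closed immersion over $B$. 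The isomorphism $\pi_\ast\mathcal{O}_X(3\Sigma)\simeq\mathbb{L}^{-2}\oplus\mathbb{L}^{-3}\oplus\mathcal{O}_B=\mathcal{S}_3$ is item (ii) of Proposition \ref{immagini multipli sezione} with $r=3$.

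Next I would identify the image. Work relatively over an affine open $U\subseteq B$ trivializing $\mathbb{L}$; on $\pi^{-1}(U)$ the sheaves $\pi_\ast\mathcal{O}_X(r\Sigma)$ are free, and the filtration by order of pole along $\Sigma$ shows there is a section $z$ generating $\pi_\ast\mathcal{O}_X(r\Sigma)$ at pole order $0$ (the image of $1\in\mathcal{O}_B$), a section $x$ appearing first at pole order $2$ (using the $\mathbb{L}^{-2}$ summand), and a section $y$ appearing first at pole order $3$ (the $\mathbb{L}^{-3}$ summand). Then $x^3$, $y^2z$, $xz^2$, $z^3$, and $xyz$ all lie in $H^0(X_U,\mathcal{O}_X(6\Sigma))$, which by Proposition \ref{immagini multipli sezione} has rank $6$ over $U$; counting, there must be exactly one linear relation among a suitable set of six of these monomials together with $x^2z$, and analyzing pole orders along $\Sigma$ forces the relation to take the shape $y^2z = x^3 + a_2 x^2z + a_1 xyz + a_3 yz^2 + a_4 xz^2 + a_6 z^3$ with $a_i\in H^0(U,\mathbb{L}^i)$ (the coefficient of $y^2z$ and of $x^3$ being units, normalized to $1$ after rescaling $x,y$). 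In characteristic $0$ one completes the square in $y$ and the cube in $x$ to eliminate $a_1,a_2,a_3$, arriving at \eqref{wei equa}; the line-bundle bookkeeping (each $x^iy^j$ contributes a twist by $\mathbb{L}^{-2i-3j}$, so $a_k\in H^0(\mathbb{L}^k)$) matches the weights. Globalizing over $B$ is then a matter of checking the local choices of $x,y,z$ patch, which they do because they are canonically determined by the associated-graded of the pole-order filtration up to the stated $k^\times$-scaling.

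Finally, the ambiguity: the isomorphism $\pi_\ast\mathcal{O}_X(3\Sigma)\simeq\mathcal{S}_3$ is canonical on the $\mathcal{O}_B$ and $\mathbb{L}^{-3}$ graded pieces only up to a global unit on $B$, i.e.\ an element of $k^\times$ (since $B$ is a complete variety, $H^0(B,\mathcal{O}_B^\times)=k^\times$); tracking how rescaling $z\mapsto c\, z$ propagates through the normalization forces $x\mapsto c^2 x$, $y\mapsto c^3 y$, and hence $(a_4,a_6)\mapsto(c^4 a_4, c^6 a_6)$, which is precisely the asserted action. The reduction to the reduced-discriminant smoothness criterion referenced before the statement is the local Jacobian computation on \eqref{wei equa} and is not needed here.

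The main obstacle I anticipate is the bookkeeping in the middle step: showing cleanly, and over a not-necessarily-trivial $\mathbb{L}$, that the unique sextic relation among the chosen monomials has coefficients that are honest global sections of the correct powers of $\mathbb{L}$ (rather than merely rational sections), and that the characteristic-zero completion of the square and cube is compatible with the grading. This is where one must be careful that the ``suitable $a_i$'' are genuinely in $H^0(B,\mathbb{L}^i)$; everything else is a fiberwise Riemann--Roch count plus cohomology-and-base-change, which is standard.
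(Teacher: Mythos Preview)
The paper does not supply its own proof of this proposition: it is stated with a citation to Miranda and Friedman, and no argument is given in the text. Your proposal is essentially the standard proof one finds in those references---fiberwise very ampleness of a degree-$3$ divisor on a genus-one curve plus cohomology-and-base-change for the relative statement, Proposition~\ref{immagini multipli sezione}(ii) for the structure of $\pi_\ast\mathcal{O}_X(r\Sigma)$, and a pole-order count in $\pi_\ast\mathcal{O}_X(6\Sigma)$ to produce the unique relation, followed by completion of the square and cube in characteristic zero.

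One small point to tighten in your final paragraph: the section $z$ is the image of $1\in\mathcal{O}_B$ and is canonical, so the ambiguity does not arise from rescaling $z$. What is non-canonical is the choice of \emph{splittings} of the pole-order filtration $\mathcal{O}_B\subset\pi_\ast\mathcal{O}_X(2\Sigma)\subset\pi_\ast\mathcal{O}_X(3\Sigma)$, i.e.\ the lifts of the canonical generators of the graded pieces $\mathbb{L}^{-2}$ and $\mathbb{L}^{-3}$ to sections $x,y$. After completing the square in $y$ and the cube in $x$ the additive freedom is eliminated, and the residual multiplicative freedom $x\mapsto\lambda x$, $y\mapsto\mu y$ is constrained by the requirement that the leading coefficients of $y^2z$ and $x^3$ both remain $1$, forcing $\mu^2=\lambda^3$, hence $(\lambda,\mu)=(c^2,c^3)$ and $(a_4,a_6)\mapsto(c^4a_4,c^6a_6)$. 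With this correction your outline is sound; the ``main obstacle'' you flag (that the $a_i$ are honest global sections of $\mathbb{L}^i$) is handled by the fact that the relation is a map of locally free $\mathcal{O}_B$-modules, so its coefficients in the canonical graded pieces are automatically global.
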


In the {remainder} of {this} paper we shall denote by $B$ an irreducible, smooth, projective \emph{curve} of genus $g$, and by $\pi:X\to B$ a Weierstrass fibration, with section $\Sigma$, and fundamental line bundle $\mathbb{L}$. Moreover, we set
$$
d:=\mathrm{deg}(\mathbb{L}),
$$
and we remark that, as a consequence of Proposition \ref{sull w fib}, we have $d\geqslant 0$, and $d=0$ if and only if  $\mathbb{L}^{12}\simeq\mathcal{O}_B$. \emph{We will always assume that $\pi$ has at worst nodal fibers} (for example, for a Weierstrass fibration as in \eqref{can w fib}, this is equivalent to the divisors of the sections $a_4,a_6$ having disjoint supports).

Let $Z$ be the scheme of singularities of $\pi$. Then $Z$ is a local complete intersection closed subscheme of $X$ of dimension $0$ and length $12d$, supported on the nodes of the singular fibers, and $\pi$ maps $Z$ isomorphically onto the scheme of zeros of the discriminant section $\Delta$ from \eqref{il discri}. The us state a lemma which will be useful in the following.

\begin{lemma}
\label{le prop del fascio di id di Z}
For each $n\geqslant 0$
the sheaf $\mathrm{Sym}^n\mathcal{I}_Z$ is torsion-free.
\end{lemma}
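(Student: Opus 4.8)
The claim is that $\mathrm{Sym}^n\mathcal{I}_Z$ is torsion-free for every $n\geqslant 0$, where $Z\subset X$ is the singular scheme of $\pi$, a length-$12d$ local complete intersection supported on the nodes of the singular fibers. The key structural fact is that $Z$ is a local complete intersection of codimension $2$ in the smooth surface $X$, so at each node $z$ the ideal $\mathcal{I}_{Z,z}$ is generated by a regular sequence $(f,g)$ in the regular local ring $\mathcal{O}_{X,z}$. My plan is to reduce the whole statement to a purely local computation at the (finitely many) nodes, and there to exploit that the symmetric powers of an ideal generated by a regular sequence of length $2$ are completely understood via the Koszul complex.

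**Localizing.** First I would observe that $\mathrm{Sym}^n\mathcal{I}_Z$ is a coherent sheaf on $X$, and away from the support of $Z$ it coincides with $\mathrm{Sym}^n\mathcal{O}_X=\mathcal{O}_X$ (since $\mathcal{I}_Z$ is invertible, indeed trivial, on $X\setminus Z$); hence any torsion of $\mathrm{Sym}^n\mathcal{I}_Z$ would be a torsion subsheaf supported on the finitely many nodes. So it suffices to show that for each node $z$, the module $\mathrm{Sym}^n(\mathcal{I}_{Z,z})$ over the regular local ring $R:=\mathcal{O}_{X,z}$ (a $2$-dimensional regular local ring) is torsion-free, i.e.\ has no nonzero element killed by a nonzerodivisor of $R$. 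Equivalently, since $R$ is a domain and $\mathcal{I}_{Z,z}=(f,g)$ with $(f,g)$ a regular sequence, I want to show $\mathrm{Sym}^n(f,g)$ is torsion-free as an $R$-module.

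**The local computation.** For an ideal $I=(f,g)$ generated by a regular sequence of length $2$ in a Noetherian ring, there is a classical presentation: the natural surjection $\mathrm{Sym}^n(R^2)\to\mathrm{Sym}^n I$ (sending a basis to $f,g$) has kernel generated by the single ``Koszul'' relation $e_1 g - e_2 f$ in degree $1$ times $\mathrm{Sym}^{n-1}(R^2)$; concretely $\mathrm{Sym}^n I \cong R^{n+1}$ modulo the image of the map $\mathrm{Sym}^{n-1}(R^2)\xrightarrow{(g,-f)} \mathrm{Sym}^n(R^2)$. In fact, because $(f,g)$ is a regular sequence, the Koszul complex resolves $R/I$ and one gets that $I$ has no ``excess'' relations, so the symmetric algebra of $I$ equals its Rees algebra; the Rees algebra $R[It]=\bigoplus_n I^n t^n$ is a subring of the domain $R[t]$, hence a domain, and in particular each graded piece $I^n$ is torsion-free, and the equality $\mathrm{Sym}^n I \cong I^n$ (valid for ideals generated by a regular sequence — these are of linear type) finishes the argument. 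So concretely: I would (1) recall or cite that an ideal generated by a regular sequence is of linear type, so $\mathrm{Sym}^n I\xrightarrow{\sim} I^n$; (2) note $I^n\subseteq R$ is an ideal in a domain, hence torsion-free; (3) globalize by the localization remark above, observing that torsion-freeness of a coherent sheaf on the integral scheme $X$ is checked stalkwise and is automatic at points outside $\operatorname{Supp}Z$.

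**Main obstacle.** The only real content is the ``linear type'' statement $\mathrm{Sym}^n I \cong I^n$ for $I$ generated by a regular sequence of length $2$; everything else is bookkeeping. I would either invoke the standard fact that ideals generated by regular sequences (more generally $d$-sequences, or ideals whose Koszul homology vanishes appropriately) are of linear type, or — to keep the paper self-contained — give the two-line argument in the length-$2$ case: the symmetric algebra $\mathrm{Sym}_R(I)$ is $R[u,v]/(gu-fv)$, and since $gu-fv$ is a nonzerodivisor in $R[u,v]$ (as $R$ is a domain and $f,g\neq 0$) the quotient is a domain of the right dimension, whence it maps isomorphically onto the Rees algebra $R[It]$. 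Thus each $\mathrm{Sym}^n\mathcal{I}_{Z,z}\cong \mathcal{I}_{Z,z}^{\,n}$ is torsion-free, and the lemma follows.
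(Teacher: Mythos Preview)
Your approach is correct and conceptually cleaner than the paper's. The paper also localizes (implicitly: it reduces to the algebraic statement for a UFD $R$ and coprime $f,g$), writes down the same Koszul-type presentation $G_1\xrightarrow{\delta}G_0\to\mathrm{Sym}^rI\to 0$, but then proceeds by a direct linear-algebra computation: it dualizes, shows explicitly that $(\mathrm{Sym}^rI)^\vee$ is free of rank~$1$ generated by (the preimage of) $\omega=\sum_i f^ig^{r-i}u_i^\vee$, and then checks by hand that $\ker\omega\subseteq\mathrm{im}\,\delta$, so that $\mathrm{ev}_M$ is injective. Your route via ``ideals generated by a regular sequence are of linear type,'' i.e.\ $\mathrm{Sym}^nI\xrightarrow{\sim}I^n\subseteq R$, bypasses this explicit computation entirely; once that isomorphism is granted, torsion-freeness is immediate. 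The paper's proof is fully self-contained but longer; yours is shorter but relies on (or must re-prove) a standard commutative-algebra fact.

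One small correction to your sketched ``two-line argument'': the claim that $gu-fv$ being a \emph{nonzerodivisor} in $R[u,v]$ makes the quotient a \emph{domain} is false as stated (think of $xy$ in $k[x,y]$). What you actually need is that $gu-fv$ is \emph{irreducible}, hence prime, in the UFD $R[u,v]$; this follows because a degree-$1$ factorization $gu-fv=c\cdot(au+bv)$ would force $c\mid\gcd(f,g)$, and $(f,g)$ being a regular sequence of length~$2$ in the $2$-dimensional regular local ring $R$ forces $\gcd(f,g)$ to be a unit. With that fix (or simply by citing the linear-type result), your argument is complete.
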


\begin{proof}
It is enough to prove the following algebraic statement: \emph{let $R$ be a UFD, and let $I$ be the ideal of $R$ generated by two relatively prime elements $f,g\in R$. Then the symmetric powers $\mathrm{Sym}^r I$ of $I$ are torsion-free $R$-modules}. This is trivial for $r\in\{0,1\}$. So, let us fix an $r\geqslant 2$, and set $M:=\mathrm{Sym}^rI$. The claim is then equivalent to the map $\mathrm{ev}_M:M\to M^{\vee\vee}$ being injective. From the presentation (actually, free resolution) of $I$
$$
F_1\xrightarrow{\beta}F_0\xrightarrow{\alpha} I\to 0,
$$
where $F_0,F_1$ are free $R$-modules with bases $(e_1,e_2)$ and $(e)$ respectively, while $\alpha$ and $\beta$ are the maps
$$
\alpha:\left\{
\begin{array}{c}
e_1\mapsto f\\
e_2\mapsto g
\end{array}\right.,
\qquad\beta:e\mapsto ge_1-fe_2,
$$
we get the presentation of $M$
\begin{equation}
\label{presentazione di M}
G_1\xrightarrow{\delta} G_0\xrightarrow{\gamma} M\to 0,
\end{equation}
where the modules $G_0:=\mathrm{Sym}^rF_0$ and $ G_1:=F_1\otimes\mathrm{Sym}^{r-1}F_0$ are free with bases 
$$
(u_i:=e_1^ie_2^{r-i})_{0\leqslant i\leqslant r},\qquad (v_i:=e\otimes e_1^ie_2^{r-1-i})_{0\leqslant i\leqslant r-1}
$$
respectively, $\gamma:=\mathrm{Sym}^r\alpha$, and $\delta$ is induced by the multilinear map (symmetric in the $y_i$'s)
$$
F_1\times F_0^{r-1}\to G_0,\qquad (x,y_1,\dots,y_{r-1})\mapsto \beta(x)y_1\cdots y_{r-1};
$$
thus, with respect to the bases $(u_i), (v_i)$, $\delta$ is represented by the $(r+1)\times r$ matrix with entries in $R$
$$
\left(
\begin{array}{ccccc}
-f & &  & &\\
g & -f &  & \\
& g & \ddots & \\
&& \ddots &-f & \\
 && &g & -f\\
 & &&& g
\end{array}
\right).
$$
Dualizing \eqref{presentazione di M}, we obtain the exact sequence
$$
0\to M^\vee\xrightarrow{\gamma^\vee}G_0^\vee\xrightarrow{\delta^\vee}G_1^\vee,
$$
showing that $\gamma^\vee$ maps $M^\vee$ isomorphically onto the kernel of $\delta^\vee$. To compute this kernel,  let $K$ be the field of fractions of $R$. An easy computation shows that $\ker(\delta^\vee\otimes_RK)$ is generated by $\omega\otimes 1\in G_0^\vee\otimes_RK$, where
$$
\omega:=\sum_{i=0}^rf^{i}g^{r-i}u_i^\vee\in G_0^\vee;
$$
a standard argument (based on the assumptions that $R$ is factorial, and that $\mathrm{gcd}(f,g)=1$) then shows that $\ker(\delta^\vee)=R\omega$. Thus $M^\vee =R\bar{\omega}$, where $\bar{\omega}\in M^\vee$ is the preimage of $\omega$ via $\gamma^\vee$. This implies, in particular, that $\ker(\mathrm{ev}_M)=\ker(\bar{\omega})=\gamma(\ker(\omega))$. Now,  $\ker(\omega)$ consists of the elements $x:=\sum_{i=0}^ra_iu_i$ of $G_0$ ($a_i\in R$) such that the pair $(f,g)$ is a root of the (homogeneous, degree $r$) polynomial $
\phi:=\sum_{i=0}^ra_iX^iY^{r-i}\in R[X,Y]$. An easy argument (again based on $R$ being a UFD) shows that $\phi(f,g)=0$ if and only if $\phi$ factors in $R[X,Y]$  as $(gX-fY)\cdot\psi$, for some homogeneous, degree $r-1$ polynomial $\psi$, say $\psi=\sum_{i=0}^{r-1}b_iX^iY^{r-1-i}$. Expanding the equation $\phi=(gX-fY)\cdot\psi$ then shows that the element $y:=\sum_{i=0}^{r-1}b_iv_i\in G_1$ satisfies $\delta(y)=x$. Thus $\ker(\omega)\subseteq\mathrm{im}(\delta)$, and $\mathrm{ev}_M$ is injective, as claimed.
\end{proof}

\subsection{Differentials}

Let $\Omega_\pi$ (resp., $\omega_\pi^\circ$) be the sheaf of relative K\"ahler differentials (resp., the relative dualizing sheaf) of the projection $\pi$. The following useful isomorphisms will be used without further reference in the sequel:
$$
\Omega_\pi\simeq\mathcal{I}_Z\otimes\omega_\pi^\circ,
\qquad
\omega_\pi^\circ\simeq\pi^\ast\mathbb{L}\in\mathrm{Pic}(X).
$$
They show, in particular, that $\Omega_\pi$ is torsion-free,
with determinant $\det\Omega_\pi\simeq\omega_\pi^\circ$,
and that, for each closed fiber $X_b$ of $\pi$, the restriction $\mathcal{I}_{Z,b}$ of $\mathcal{I}_Z$ to $X_b$ is isomorphic to the sheaf of K\"ahler differentials of $X_b$; thus, $ h^1(X_b,\mathcal{I}_{Z,b})= h^1(X_b,\Omega_{X_b})=1$. The last equality is obvious for $X_b$ smooth, while, for $X_b$ singular, it can be seen, for example, as a consequence of the following {fact}.
\begin{lemma}
Let $C$ be an integral, projective, nodal curve of arithmetic genus $1$, and let $x\in C(k)$ be the node of $C$. Then there is an exact sequence of sheaves on $C$
\begin{equation}
\label{il cotangentente della nodale}
0\to k(x)\to\Omega_C\to\mathfrak{m}_x\to 0.
\end{equation}
\end{lemma}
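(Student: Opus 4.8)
The plan is to exhibit the exact sequence \eqref{il cotangentente della nodale} explicitly in terms of the local structure of $C$ at the node $x$ and its smoothness elsewhere. First I would recall that $C$ is a projective curve (hence a local complete intersection, being nodal), so that $\Omega_C$ is a coherent sheaf which is locally free of rank $1$ away from $x$; at the node, if $\widehat{\mathcal{O}}_{C,x}\simeq k[[u,v]]/(uv)$, then $\Omega_{C,x}$ is generated by the two differentials $du,dv$ subject to the single relation $u\,dv+v\,du=0$, so it is \emph{not} locally free there. The key observation is that the torsion subsheaf of $\Omega_C$ is supported at $x$ and is precisely $k(x)$: indeed over $k[[u,v]]/(uv)$ the element $v\,du=-u\,dv$ is annihilated by both $u$ and $v$, hence by $\mathfrak{m}_x$, giving a copy of $k(x)$ inside $\Omega_{C,x}$, and a direct computation shows this is the full torsion.

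The heart of the argument is to identify the torsion-free quotient $\Omega_C/k(x)$ with the maximal ideal sheaf $\mathfrak{m}_x\subset\mathcal{O}_C$. I would do this locally at $x$: the quotient $\big(k[[u,v]]/(uv)\big)\,du\oplus\big(k[[u,v]]/(uv)\big)\,dv$ modulo the relation $u\,dv=-v\,du$ and modulo the torsion line $k\cdot(v\,du)$ can be presented as the submodule of the normalization generated by $u$ and $v$ — concretely, on the branch $v=0$ the surviving differential is $du$, scaled by functions vanishing along the other branch, and symmetrically; this matches exactly the local description of $\mathfrak{m}_x$, whose pushforward to the normalization is $(u)\oplus(v)$. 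Away from $x$ the sheaf $\mathfrak{m}_x$ restricts to $\mathcal{O}_C$, and $\Omega_C$ restricts to the invertible dualizing sheaf $\omega_C$; since $C$ has arithmetic genus $1$, $\omega_C\simeq\mathcal{O}_C$, so the two line bundles agree on $C\setminus\{x\}$, and one checks the gluing of the local identifications is consistent (there is no ambiguity because $C\setminus\{x\}$ is connected, or one simply builds the global map directly). Thus one obtains a short exact sequence $0\to k(x)\to\Omega_C\to\mathfrak{m}_x\to 0$.

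Alternatively — and this may be cleaner to write up — I would start from the conductor/normalization square. Let $\nu:\widetilde{C}\to C$ be the normalization, with $\widetilde{C}\simeq\mathbb{P}^1$ (since $p_a(C)=1$ and $C$ has one node), and let $y_1,y_2$ be the two preimages of $x$. There is the standard exact sequence $0\to\mathcal{O}_C\to\nu_\ast\mathcal{O}_{\widetilde C}\to k(x)\to 0$, and $\mathfrak{m}_x$ sits in $0\to\mathfrak{m}_x\to\mathcal{O}_C\to k(x)\to 0$; combining, $\mathfrak{m}_x$ is the kernel of $\nu_\ast\mathcal{O}_{\widetilde C}\to k(x)\oplus k(x)$, i.e. $\mathfrak{m}_x\simeq\nu_\ast\mathcal{O}_{\widetilde C}(-y_1-y_2)$. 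On the other hand $\omega_C\simeq\nu_\ast\big(\omega_{\widetilde C}(y_1+y_2)\big)=\nu_\ast\mathcal{O}_{\widetilde C}$ (as $\deg(\omega_{\mathbb P^1}(y_1+y_2))=0$), so $\omega_C\simeq\nu_\ast\mathcal{O}_{\widetilde C}$ and then $\mathfrak{m}_x\simeq\omega_C(-x)$ in an appropriate sense; the residue/adjunction sequence relating $\Omega_C$, its torsion at the node, and $\omega_C$ then produces \eqref{il cotangentente della nodale}. I would pick whichever of these is shortest given the machinery already in the paper.

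The main obstacle I anticipate is purely bookkeeping: making the local identification of $\Omega_C/(\text{torsion})$ with $\mathfrak{m}_x$ canonical enough to glue with the generic identification $\Omega_C|_{C\setminus x}=\omega_C|_{C\setminus x}\simeq\mathcal{O}_C|_{C\setminus x}=\mathfrak{m}_x|_{C\setminus x}$ without sign or scaling errors. One must be careful that the isomorphism $\omega_C\simeq\mathcal{O}_C$ coming from $p_a=1$ is fixed once and for all, and that the map $\Omega_C\to\mathfrak{m}_x$ one writes down is $\mathcal{O}_C$-linear and surjective with kernel exactly the skyscraper $k(x)$; the surjectivity at $x$ and the exactness of the torsion sequence there is the one genuinely local computation that cannot be skipped.
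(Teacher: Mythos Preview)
The paper states this lemma without proof, so there is nothing to compare your argument against. Your first approach is sound: the local computation at the node showing that the torsion of $\Omega_C$ is the line $k\cdot(v\,du)$ is correct, and so is the identification of the quotient with $\mathfrak m_x$ via $du\mapsto u$, $dv\mapsto v$. The cleanest way to handle the gluing you worry about is to use the canonical map $\Omega_C\to\omega_C$ (available for any Gorenstein curve): at a node it has kernel $k(x)$ and image $\mathfrak m_x\cdot\omega_C$, and since $p_a(C)=1$ gives $\omega_C\simeq\mathcal O_C$, the image is $\mathfrak m_x$. This globalizes your local map without any ad hoc choices.

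Your alternative route via the normalization contains an error: the identification $\omega_C\simeq\nu_\ast\mathcal O_{\widetilde C}$ is false. The sheaf $\nu_\ast\mathcal O_{\widetilde C}$ is not locally free at $x$ (its fibre there is two-dimensional), whereas $\omega_C$ is invertible because $C$ is Gorenstein. What is true is that $\omega_C$ sits inside $\nu_\ast\big(\omega_{\widetilde C}(y_1+y_2)\big)\simeq\nu_\ast\mathcal O_{\widetilde C}$ as the subsheaf of forms whose residues at $y_1,y_2$ sum to zero, with quotient $k(x)$. Also, $\omega_C(-x)$ is not well defined as a line bundle since the node $x$ is not a Cartier divisor; one should write $\mathfrak m_x\otimes\omega_C$ instead. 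These points are fixable, but the first approach (or its streamlined form via $\Omega_C\to\omega_C$) is the one to write up.
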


{The sheaf $\mathrm{R}^1\pi_\ast\mathcal{I}_Z$ on $B$ is  thus invertible by
Grauert's theorem, and it  can be computed explicitly using relative Serre duality \cite{kleiman}}:
\begin{eqnarray*}
\mathrm{R}^1\pi_\ast\mathcal{I}_Z
&\simeq &
\mathit{Ext}_\pi^0(\mathcal{I}_Z,\omega_\pi^\circ)^\vee\simeq
(\pi_\ast\mathit{Hom}_X(\mathcal{I}_X,\pi^\ast\mathbb{L}))^\vee\\
&\simeq & \pi_\ast(\mathcal{I}_Z^\vee\otimes\pi^\ast\mathbb{L})^\vee\simeq \mathbb{L}^{-1}\otimes\pi_\ast\mathcal{O}_X\simeq\mathbb{L}^{-1}.
\end{eqnarray*}
The last isomorphism allows us to prove the following {result.}

\begin{lemma}
\label{immagine diretta del fascio di ideali dello schema singolare}
There is a canonical isomorphism 
$$
\pi_\ast\mathcal{I}_Z\simeq\mathbb{L}^{-12}.
$$
\end{lemma}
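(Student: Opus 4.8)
The plan is to push the ideal-sheaf sequence of $Z$ forward along $\pi$ and read off $\pi_\ast\mathcal{I}_Z$ from the resulting long exact sequence, identifying every other term with something already available. Concretely, I would start from $0\to\mathcal{I}_Z\to\mathcal{O}_X\to\mathcal{O}_Z\to 0$ on $X$ and apply $\pi_\ast$, using: $\pi_\ast\mathcal{O}_X\simeq\mathcal{O}_B$ (Proposition \ref{immagini multipli sezione}(i)); the fact that $Z$ is finite over $B$ — indeed $\pi$ restricts to an isomorphism of $Z$ onto the zero scheme $W:=\mathrm{Z}(\Delta)\subset B$ of the discriminant section $\Delta\in H^0(B,\mathbb{L}^{12})$ — whence $\pi_\ast\mathcal{O}_Z\simeq\mathcal{O}_W=\mathcal{O}_B/\mathcal{I}_W$ and $\mathrm{R}^{>0}\pi_\ast\mathcal{O}_Z=0$; and the isomorphism $\mathrm{R}^1\pi_\ast\mathcal{I}_Z\simeq\mathbb{L}^{-1}$ just established. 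The long exact sequence then has the shape $0\to\pi_\ast\mathcal{I}_Z\to\mathcal{O}_B\xrightarrow{\,\epsilon\,}\mathcal{O}_W\xrightarrow{\,\partial\,}\mathrm{R}^1\pi_\ast\mathcal{I}_Z$.

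The key point is that the connecting map $\partial$ vanishes: it is an $\mathcal{O}_B$-linear map from the torsion sheaf $\mathcal{O}_W$ (supported on the finite set $W$) to the invertible, hence torsion-free, sheaf $\mathrm{R}^1\pi_\ast\mathcal{I}_Z\simeq\mathbb{L}^{-1}$, and on the integral scheme $B$ every such map is zero. (One can also deduce the surjectivity of $\epsilon$ from the fact that $\mathrm{R}^1\pi_\ast\mathcal{I}_Z\to\mathrm{R}^1\pi_\ast\mathcal{O}_X$ is a surjective endomorphism of $\mathbb{L}^{-1}$, hence injective.) Consequently $\epsilon\colon\mathcal{O}_B\to\mathcal{O}_W$ is surjective and $\pi_\ast\mathcal{I}_Z=\ker\epsilon$ is a nonzero ideal sheaf of $\mathcal{O}_B$, of the form $\mathcal{O}_B(-D)$ with $\deg D=\mathrm{length}(\mathcal{O}_W)=\mathrm{length}(Z)=12d$.

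It remains to identify $\ker\epsilon$ with $\mathcal{I}_W$ itself. As $\epsilon$ is a surjection of $\mathcal{O}_B$ onto the cyclic $\mathcal{O}_B$-module $\mathcal{O}_W=\mathcal{O}_B/\mathcal{I}_W$, it sends the section $1$ to a generator of $\mathcal{O}_W$, i.e. to a unit of the ring $\mathcal{O}_W$; hence $\ker\epsilon=\{\,s:\bar s=0\text{ in }\mathcal{O}_W\,\}=\mathcal{I}_W$. (Alternatively, $\mathcal{I}_W\subseteq\ker\epsilon$ by $\mathcal{O}_B$-linearity, and both are invertible ideals of the same degree $-12d=-\mathrm{length}(\mathcal{O}_W)$, so they coincide.) Finally, by definition of the zero scheme of a section, $\mathcal{I}_W=\mathcal{I}_{\mathrm{Z}(\Delta)}$ is the image of $\mathbb{L}^{-12}\xrightarrow{\,\Delta\,}\mathcal{O}_B$, a map that is injective because $\Delta\neq 0$ and $B$ is integral; thus $\Delta$ induces an isomorphism $\mathbb{L}^{-12}\xrightarrow{\ \sim\ }\pi_\ast\mathcal{I}_Z$ (canonical once $\Delta$ is fixed, which by Proposition \ref{sull w fib} is the case up to the $k^\times$-scaling $(a_4,a_6)\mapsto(c^4a_4,c^6a_6)$).

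There is no genuinely difficult step here: the argument is essentially a computation with the Leray long exact sequence built out of facts proved earlier in this section. The only two points I would actually write out are the vanishing of $\partial$ — for which the relevant observation is the torsion versus torsion-free dichotomy, combined with the invertibility of $\mathrm{R}^1\pi_\ast\mathcal{I}_Z$ — and the identification of $\ker\epsilon$ with the ideal sheaf of $\mathrm{Z}(\Delta)$.
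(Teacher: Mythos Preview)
Your proof is correct and follows essentially the same route as the paper: push forward the ideal-sheaf sequence of $Z$, identify the terms, and conclude that $\pi_\ast\mathcal{I}_Z$ is the ideal sheaf of $\mathrm{Z}(\Delta)$, hence $\mathbb{L}^{-12}$. The only difference is cosmetic: the paper deduces surjectivity of $\mathcal{O}_B\to\mathcal{O}_{Z'}$ by extending the long exact sequence two steps further (using $\mathrm{R}^1\pi_\ast\mathcal{O}_X\simeq\mathbb{L}^{-1}$ and $\mathrm{R}^1\pi_\ast\mathcal{O}_Z=0$ to get a surjective, hence injective, endomorphism of $\mathbb{L}^{-1}$), whereas your primary argument---that $\partial$ maps a torsion sheaf to a torsion-free one and hence vanishes---is a touch more direct; you in fact mention the paper's argument as your parenthetical alternative.
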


\begin{proof}

We have the exact sequence of sheaves on $X$:
$$
0\to\mathcal{I}_Z\to\mathcal{O}_X\to\mathcal{O}_Z\to 0;
$$
thus, setting $Z':=\mathrm{Z}(\Delta)$, and taking the exact sequence of higher direct $\pi$-images,
we get an exact sequence
$$
0\to\pi_\ast\mathcal{I}_Z\to\mathcal{O}_B\to\mathcal{O}_{Z'}\to\mathbb{L}^{-1}\to\mathbb{L}^{-1}\to 0,
$$
showing that the map $\mathcal{O}_B\to\mathcal{O}_{Z'}$ is surjective. But, up to non-zero scalars, there is only one surjection $\mathcal{O}_B\to\mathcal{O}_{Z'}$; so
$$
\pi_\ast\mathcal{I}_Z\simeq\mathcal{O}_B(-Z')\simeq\mathbb{L}^{-12},
$$
as claimed.
\end{proof}

\subsubsection{} In the canonical exact sequence of sheaves of K\"ahler differentials
$$
\pi^\ast\Omega_B\xrightarrow{\pi^\ast} \Omega_X\to\Omega_\pi\to 0,
$$
the morphism $\pi^\ast$ is generically injective by rank counting, so 
that (the sheaf $\pi^\ast\Omega_B$ being locally free on the integral scheme $X$) the sequence
\begin{equation}
\label{exact sequence of kahler differntials}
\mathcal{E}:\quad 0\to\pi^\ast\Omega_B\xrightarrow{\pi^\ast} \Omega_X\to\Omega_\pi\to 0
\end{equation}
is exact. In particular, one has the following special case of \emph{Kodaira's formula} \cite{miranda} for the canonical line bundle of $X$
$$
\omega_X=\det\Omega_X\simeq\pi^\ast\omega_B\otimes\det\Omega_\pi\simeq\pi^\ast(\omega_B\otimes\mathbb{L}).
$$

\subsubsection{}
 For a closed point $b$ of $B$, the restriction
$$
\mathcal{E}_b:\quad 0\to\mathcal{O}_{X_b}\otimes_{k(b)}\Omega_B(b)\to\Omega_{X,b}\to\Omega_{X_b}\to 0
$$
of $\mathcal{E}$ to $X_b$ is exact (by the usual arguments); one can thus consider the \emph{relative extension class} of \eqref{exact sequence of kahler differntials}, or \emph{Kodaira-Spencer map} of $\pi$ \cite{huy}. This is a global section $\xi(\mathcal{E}/B)$ of the sheaf on $B$
\begin{eqnarray*}
\mathit{Ext}^1_\pi(\Omega_\pi,\pi^\ast\Omega_B)&\simeq &\mathit{Ext}_\pi^1(\Omega_\pi\otimes\pi^\ast\omega_B^{-1}\otimes\omega_\pi^\circ,\omega_\pi^\circ)\simeq \mathit{Ext}_\pi^1(\pi^\ast(\mathbb{L}^{2}\otimes\omega_B^{-1})\otimes\mathcal{I}_Z,\omega_\pi^\circ)\\
&\simeq &(\mathrm{R}^0\pi_\ast(\pi^\ast(\mathbb{L}^{2}\otimes\omega_B^{-1})\otimes\mathcal{I}_Z))^\vee
\simeq
\mathbb{L}^{-2}\otimes\omega_B\otimes(\pi_\ast\mathcal{I}_Z)^\vee\simeq\mathbb{L}^{10}\otimes\omega_B
\end{eqnarray*}
(the last isomorphism is due to Lemma \ref{immagine diretta del fascio di ideali dello schema singolare}),
having the property that, for each $b\in B(k)$, the extension $\mathcal{E}_b$ splits if and only if the image of $\xi(\mathcal{E}/B)$
in $\mathbb{L}^{10}\otimes\omega_B\otimes k(b)$ is zero. It is clear that $\mathcal{E}_b$ is non-split if $X_b$ is singular. It follows that if our fibration has at least one singular fiber (which is equivalent to $d>0$; one says in this case that the fibration is \emph{non-isotrivial}), then $\xi(\mathcal{E}/B)$ is a non-zero section of the line bundle $\mathbb{L}^{10}\otimes\omega_B$, and thus, for almost all $b\in B(k)$, the extension $\mathcal{E}_b$ is non-split. For such a $b$, the bundle $\Omega_{X,b}$ is a then a non-split self-extension of $\mathcal{O}_{X_b}$. Weak isomorphism classes of such extensions correspond bijectively to closed points of the projective space $\mathbb{P}\mathrm{Ext}^1_{X_b}(\mathcal{O}_{X_b},\mathcal{O}_{X_b})\simeq\mathbb{P} H^1(\mathcal{O}_{X_b})$, which is zero-dimensional; the sheaf corresponding to its closed point is the rank $2$ \emph{Atiyah bundle}  $\mathrm{I}_2$  of $X_b$ \cite{atiyah}. \emph{From now on we will assume our fibration to be non-isotrivial, so that $\Omega_{X,b}$ will be isomorphic to the rank $2$ Atiyah bundle on $X_b$, for $b\in B(k)$} general.

\subsubsection{}

We remark that, for each $r\geqslant 0$, we have a natural map
\begin{equation}
\label{la mappa da sym bla a sym bla}
\pi^\ast\omega_B^r\xrightarrow{\simeq}\pi^\ast\mathrm{Sym}^r\Omega_B\xrightarrow{\simeq}\mathrm{Sym}^r\pi^\ast\Omega_B\xrightarrow{\mathrm{Sym}^r\pi^\ast}\mathrm{Sym}^r\Omega_X.
\end{equation}
We then have the following Proposition, which will be applied in Section \ref{3} to the study of Higgs bundles on elliptic surfaces:

\begin{Prop}
\label{le potenze}
For each $r\geqslant 0$,  the adjoint
$$
\omega_B^r\to\pi_\ast\mathrm{Sym}^r\Omega_X
$$
of the map \eqref{la mappa da sym bla a sym bla} is an isomorphism; furthermore, there is a canonical isomorphism
$$
\pi_\ast\mathrm{Sym}^r\Theta_X\simeq\mathbb{L}^{-r}.
$$
\end{Prop}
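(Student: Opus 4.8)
The plan is to deduce the formula for $\pi_\ast\mathrm{Sym}^r\Theta_X$ from the one for $\pi_\ast\mathrm{Sym}^r\Omega_X$, and to prove the latter directly — without induction on $r$ — by analysing the cokernel of the map \eqref{la mappa da sym bla a sym bla}.

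\emph{Reduction of the second statement.} Since $X$ is a smooth surface, $\Omega_X$ is locally free of rank $2$, so the pairing $\Omega_X\otimes\Omega_X\to\omega_X$ yields a canonical isomorphism $\Theta_X\cong\Omega_X\otimes\omega_X^{-1}$; hence $\mathrm{Sym}^r\Theta_X\cong\mathrm{Sym}^r\Omega_X\otimes\omega_X^{-r}$, and $\omega_X\cong\pi^\ast(\omega_B\otimes\mathbb L)$ by Kodaira's formula. Applying $\pi_\ast$ and the projection formula, $\pi_\ast\mathrm{Sym}^r\Theta_X\cong(\omega_B\otimes\mathbb L)^{-r}\otimes\pi_\ast\mathrm{Sym}^r\Omega_X$, so once the first isomorphism $\pi_\ast\mathrm{Sym}^r\Omega_X\cong\omega_B^r$ is established canonically, the canonical isomorphism $\pi_\ast\mathrm{Sym}^r\Theta_X\cong\mathbb L^{-r}$ drops out.

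\emph{The cokernel of \eqref{la mappa da sym bla a sym bla}.} The map \eqref{la mappa da sym bla a sym bla} is the $r$-th symmetric power of the injection $\pi^\ast\omega_B\hookrightarrow\Omega_X$ appearing in $\mathcal E$, and it is injective, since locally it is multiplication by a nonzero linear form in a polynomial ring over the domain $\mathcal O_X$. Let $\mathcal Q$ be its cokernel: $0\to\pi^\ast\omega_B^r\to\mathrm{Sym}^r\Omega_X\to\mathcal Q\to 0$. The first point is that $\mathcal Q$ is torsion-free. Indeed, the symmetric-power filtration associated with $\mathcal E$ equips $\mathcal Q$ with a finite filtration whose successive quotients are the sheaves $\pi^\ast\omega_B^j\otimes\mathrm{Sym}^{r-j}\Omega_\pi$, $0\leqslant j\leqslant r-1$; because $\Omega_\pi\cong\mathcal I_Z\otimes\pi^\ast\mathbb L$ and the symmetric powers $\mathrm{Sym}^{r-j}\mathcal I_Z$ are torsion-free by Lemma \ref{le prop del fascio di id di Z}, every such quotient is torsion-free, and an iterated extension of torsion-free sheaves is again torsion-free. (Alternatively: $\mathcal Q$ is locally free away from $Z$, and since $Z$ has codimension $2$ in the smooth $X$ and the two outer terms of the sequence are locally free, the local-cohomology exact sequence gives $\mathcal H^0_Z(\mathcal Q)=0$, i.e.\ $\mathcal Q$ has no torsion.) Pushing the short exact sequence forward then gives an exact sequence $0\to\omega_B^r\to\pi_\ast\mathrm{Sym}^r\Omega_X\to\pi_\ast\mathcal Q$, in which the first arrow is the adjoint of \eqref{la mappa da sym bla a sym bla}; consequently the cokernel of $\omega_B^r\hookrightarrow\pi_\ast\mathrm{Sym}^r\Omega_X$ embeds into the torsion-free sheaf $\pi_\ast\mathcal Q$, so it too is torsion-free.

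\emph{$\pi_\ast\mathrm{Sym}^r\Omega_X$ is a line bundle.} It is torsion-free, being the pushforward of the locally free sheaf $\mathrm{Sym}^r\Omega_X$ along the dominant morphism $\pi$ out of the integral scheme $X$; and its rank equals $h^0\bigl(X_\eta,(\mathrm{Sym}^r\Omega_X)|_{X_\eta}\bigr)$, where $X_\eta$ is the generic fibre — an elliptic curve over $K=k(B)$, with $K$-point cut out by $\Sigma$. Restricting $\mathcal E$ to $X_\eta$ displays $(\Omega_X)|_{X_\eta}$ as a self-extension of $\mathcal O_{X_\eta}$, and this extension is non-split: its class lives in the one-dimensional $K$-vector space $H^1(X_\eta,\mathcal O_{X_\eta})$ and is the value at the generic point of $\xi(\mathcal E/B)$, which is nonzero because $\pi$ is non-isotrivial. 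Hence $(\Omega_X)|_{X_\eta}$ is the rank-two Atiyah bundle $\mathrm{I}_2$ over $K$, so $(\mathrm{Sym}^r\Omega_X)|_{X_\eta}\cong\mathrm{Sym}^r\mathrm{I}_2\cong\mathrm{I}_{r+1}$ by \cite{atiyah}, and $h^0(\mathrm{I}_{r+1})=1$. Therefore $\pi_\ast\mathrm{Sym}^r\Omega_X$ has rank $1$, i.e.\ is a line bundle.

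\emph{Conclusion and main obstacle.} Putting the two points together, $\omega_B^r\hookrightarrow\pi_\ast\mathrm{Sym}^r\Omega_X$ is an inclusion of line bundles, so its cokernel is torsion; being simultaneously torsion-free, it must vanish, and the adjoint of \eqref{la mappa da sym bla a sym bla} is an isomorphism. The step I expect to be the main obstacle is the torsion-freeness of $\mathcal Q$ — precisely where Lemma \ref{le prop del fascio di id di Z}, or else the local-cohomology analysis along the non-flat locus $Z$, is indispensable — together with the rank count on $X_\eta$, which requires some care because one is working over the non-algebraically-closed field $K$ when invoking Atiyah's classification.
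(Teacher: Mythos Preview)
Your proof is correct and closely parallels the paper's, though the organization differs. The paper argues by induction on $r$: at the inductive step it uses the single exact sequence
\[
0\to\pi^\ast\omega_B\otimes\mathrm{Sym}^{r-1}\Omega_X\to\mathrm{Sym}^r\Omega_X\to\mathrm{Sym}^r\Omega_\pi\to 0,
\]
pushes it forward, observes (via Lemma~\ref{le prop del fascio di id di Z} and Lemma~\ref{lemma utile}) that all three pushforwards are locally free of rank~$1$ (the rank count being done on a general \emph{closed} fibre, where $\Omega_X|_{X_b}\cong\mathrm{I}_2$), and concludes. You instead bypass the induction by working with the full cokernel $\mathcal Q=\mathrm{Sym}^r\Omega_X/\pi^\ast\omega_B^r$: the symmetric-power filtration with graded pieces $\pi^\ast\omega_B^j\otimes\mathrm{Sym}^{r-j}\Omega_\pi$ that you invoke is exactly what the paper's induction unwinds to, and the same lemma on $\mathrm{Sym}^n\mathcal I_Z$ does the work. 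Your parenthetical local-cohomology argument ($\mathcal H^1_Z$ of a locally free sheaf vanishes in codimension~$2$ on a smooth surface) is a genuine shortcut, since it gives torsion-freeness of $\mathcal Q$ without appealing to Lemma~\ref{le prop del fascio di id di Z} at all. The derivation of the $\Theta_X$ statement from the $\Omega_X$ statement is identical in both proofs.

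The only point to tidy is the rank computation over the generic fibre. Your worry about $K$ not being algebraically closed is harmless: cohomology commutes with the flat base change $\bar K/K$, and the nonsplit extension stays nonsplit after base change (its class in $H^1(X_\eta,\mathcal O_{X_\eta})\otimes_K\bar K$ is still nonzero), so Atiyah's results over $\bar K$ give $h^0=1$. Equivalently, you may simply compute at a general closed fibre as the paper does, since $\pi_\ast\mathrm{Sym}^r\Omega_X$ is locally free and its rank can be read off at any point of $B$.
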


Its proof we will make use of the following elementary lemma, which will also be applied multiple times in the sequel:

\begin{lemma}
\label{lemma utile}
Let $F$ be a torsion-free sheaf on $X$. Then its direct image $\pi_\ast F$ is a locally free sheaf on $B$.
\end{lemma}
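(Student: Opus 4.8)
The statement is that for $F$ a torsion-free sheaf on the surface $X$, the direct image $\pi_\ast F$ is locally free on the smooth curve $B$. Over a Dedekind scheme (such as the smooth curve $B$) a coherent sheaf is locally free if and only if it is torsion-free, so the plan is to reduce to that criterion. First I would observe that $\pi_\ast F$ is coherent, since $\pi$ is proper. Then, since $B$ is a smooth curve, it suffices to show that $\pi_\ast F$ has no torsion, i.e.\ that for every closed point $b\in B$ the multiplication map by a uniformizer $t\in\mathcal{O}_{B,b}$ is injective on the stalk $(\pi_\ast F)_b$, equivalently that no nonzero local section of $\pi_\ast F$ is supported on a proper closed subscheme of $B$.

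The key point is to relate such torsion sections to torsion in $F$ itself. A section $s$ of $\pi_\ast F$ over an open $U\subseteq B$ is by definition a section of $F$ over $\pi^{-1}(U)$, and if $s$ is annihilated by the nonzero function $t$ on $U$, then $\pi^\ast t$ is a nonzero regular function on $\pi^{-1}(U)$ annihilating $s$ as a section of $F$. Here I use that $\pi$ is surjective (indeed flat with one-dimensional fibers, being an elliptic fibration) and $X$ is integral, so $\pi^\ast t$ is a nonzero element of the function field of $X$, hence not a zero-divisor in $\mathcal{O}_X$. Therefore $s$ is a torsion section of $F$; since $F$ is torsion-free, $s=0$. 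This shows $\pi_\ast F$ is torsion-free.

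Finally, invoking that $\pi_\ast F$ is a coherent, torsion-free sheaf on the smooth (hence regular, one-dimensional) scheme $B$, it is locally free; indeed its stalks are finitely generated torsion-free modules over the discrete valuation rings $\mathcal{O}_{B,b}$, and such modules are free. This completes the argument.

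I do not anticipate a serious obstacle here: the only mild subtlety is making sure that pulling back a nonzero function from $B$ yields a non-zero-divisor on $X$, which follows from $X$ being integral together with $\pi$ being dominant (guaranteed since $\pi$ is flat and surjective, or simply since $X\to B$ is an elliptic fibration with geometrically irreducible generic fiber). Everything else is the standard equivalence, over a Dedekind base, between torsion-freeness and local freeness.
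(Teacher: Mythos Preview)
Your argument is correct and is the standard one: coherence of $\pi_\ast F$ from properness of $\pi$, torsion-freeness of $\pi_\ast F$ because a torsion section would pull back to a torsion section of $F$, and then the equivalence of torsion-free and locally free over the regular one-dimensional base $B$. The paper itself does not supply a proof of this lemma; it is stated as an ``elementary lemma'' and left unproved, so there is nothing to compare against beyond confirming that your approach is exactly the intended (and essentially only) one.
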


\begin{proof}[Proof of Proposition \ref{le potenze}]
We will prove the claim by induction on $r$. The case $r=0$ is just the statement that the map $\pi^\ast:\mathcal{O}_B\to\pi_\ast\mathcal{O}_X$ is an isomorphism, which we already discussed (see Proposition \ref{immagini multipli sezione}). So, let us fix an integer $r\geqslant 1$, and let us assume the claim to be true for $r-1$. Starting from \eqref{exact sequence of kahler differntials}, we get a canonical exact sequence of sheaves on $X$:
\begin{equation}
\label{successione che mi serve}
\mathrm{Sym}^{r-1}\Omega_X\otimes\pi^\ast\omega_B\to\mathrm{Sym}^r\Omega_X\to\mathrm{Sym}^r\Omega_\pi\to 0,
\end{equation}
in which the first two sheaves are locally free of rank $r$ and $r+1$, respectively, while the third
$$
\mathrm{Sym}^r\Omega_\pi\simeq\mathrm{Sym}^r(\mathcal{I}_Z\otimes\pi^\ast\mathbb{L})\simeq\mathrm{Sym}^r\mathcal{I}_Z\otimes\pi^\ast\mathbb{L}^r
$$
it torsion-free of rank $1$ by Lemma \ref{le prop del fascio di id di Z}. The usual arguments then show that the first map in \eqref{successione che mi serve} is injective. We thus have a short exact sequence of torsion-free sheaves on $X$:
$$
0\to \mathrm{Sym}^{r-1}\Omega_X\otimes\pi^\ast\omega_B\to\mathrm{Sym}^r\Omega_X\to\mathrm{Sym}^r\Omega_\pi\to 0;
$$
taking direct images we get an exact sequence of (locally free, by Lemma \ref{lemma utile}) sheaves on $B$
$$
0\to\omega_B\otimes\pi_\ast\mathrm{Sym}^{r-1}\Omega_X\to\pi_\ast\mathrm{Sym}^{r}\Omega_X\to\mathbb{L}^r\otimes\pi_\ast\mathrm{Sym}^r\mathcal{I}_Z.
$$
All the bundles appearing {here} actually  have  rank $1$ (this is clear for $\mathbb{L}^r\otimes\pi_\ast\mathrm{Sym}^r\mathcal{I}_Z$. For $\omega_B\otimes\pi_\ast\mathrm{Sym}^{r-1}\Omega_X$ and $\pi_\ast\mathrm{Sym}^r\Omega_X$ one uses that the restriction of $\Omega_X$ to $X_b$ is isomorphic to $\mathrm{I}_2$ for $b$ general; thus
$$
(\mathrm{Sym}^r\Omega_X)_b\simeq\mathrm{Sym}^r\mathrm{I}_2\simeq\mathrm{I}_{r+1},
$$
the Atiyah bundle of rank $r+1$ on $X_b$, which satisfies $ h^0(X_b,\mathrm{I}_{r+1})=1$). An easy argument now shows that the map $\omega_B\otimes\pi_\ast\mathrm{Sym}^{r-1}\Omega_X\to\pi_\ast\mathrm{Sym}^{r}\Omega_X$ is an isomorphism. The inductive hypothesis guarantees that the first arrow in 
$$
\omega_B^r=\omega_B\otimes\omega_B^{r-1}\to\omega_B\otimes\pi_\ast\mathrm{Sym}^{r-1}\Omega_X\to\pi_\ast\mathrm{Sym}^{r}\Omega_X,
$$
is an isomorphism. This completes the proof of the first claim, since this composition is nothing but the adjoint of \eqref{la mappa da sym bla a sym bla}.

The second claim is proved by taking the direct image of the isomorphism
$$
\mathrm{Sym}^r\Theta_X\simeq\mathrm{Sym}^r(\Omega_X\otimes\omega_X^{-1})\simeq\mathrm{Sym}^r\Omega_X\otimes\pi^\ast(\mathbb{L}\otimes\omega_B)^{-r},
$$
and then using the projection formula and the first claim.
\end{proof}

\medskip
\section{Vector bundles on elliptic surfaces}
\label{due}

Let $(\pi:X\to B,\Sigma)$ be a non-isotrivial Weierstrass fibration with nodal singular fibers. We will be interested in vector bundles $V$ on $X$ whose restrictions to the closed fibers of $\pi$ have trivial determinant, and are semistable (we recall that there is, essentially, only one notion of semistability for a torsion-free sheaf on an integral, complete curve $C$, which does not require the choice of a polarization on $C$; also, if $C$ has arithmetic genus $1$, the \emph{degree} of a sheaf on $C$ is just its Euler-Poincar\'e characteristic).

Following \cite{mfw}, we will recall how it is possible to associate, to each such $V$, a finite map $C_V\to B$ of degree equal to the rank of $V$, called the \emph{spectral  cover of $V$}. Properties of $C_V$ such as, for instance, reducedness and integrality, strongly influence those of $V$; in particular, we will see in the next section that they restrict the types of Higgs fields which $V$ can support.

\subsection{Spectral covers}

Let $V$ be a rank $r$ bundle on $X$, and let us assume that the restriction of $V$ to each closed fiber $X_b$ of $\pi$ is semistable and has trivial determinant. Then, first of all, the sheaf $\delta:=\pi_\ast\det V$ on $B$ is invertible, and the natural map
$$
\pi^\ast\delta=\pi^\ast\pi_\ast\det V\to\det V
$$
is an isomorphism (one also says that $\det V$ is \emph{vertical} in this case). Moreover, the twisted bundle
$$
V(\Sigma):=V\otimes\mathcal{O}_X(\Sigma)
$$
restricts to a semistable, degree $r$ bundle on each $X_b$, and thus satisfies $
 h^0(X_b,V(\Sigma)_b)=r$, $ h^1(X_b,V(\Sigma)_b)=0$. It follows that the sheaf $\mathrm{R}^0\pi_\ast V(\Sigma)$ on $B$ is locally free of rank $r$, while the sheaf $\mathrm{R}^1\pi_\ast V(\Sigma)$ is $0$. Let us recall the following result from \cite{mfw}:

\begin{Prop}
For each closed point $b$ of $B$, the restriction of the natural map
\begin{equation}
\label{valua}
\mathrm{ev}:\pi^\ast\pi_\ast V(\Sigma)\to V(\Sigma)
\end{equation}
to the fiber $X_b$ of $\pi$ over $b$ is an isomorphism at the generic point of $X_b$.
\end{Prop}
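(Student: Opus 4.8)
The plan is to reduce the assertion, via cohomology and base change, to a single fibre, and then to a purely curve-theoretic statement about semistable sheaves of slope~$1$ on a Weierstrass cubic. Since $\pi$ is flat and proper, $V(\Sigma)$ is $B$-flat, $\mathrm{R}^0\pi_\ast V(\Sigma)$ is locally free of rank~$r$, $\mathrm{R}^1\pi_\ast V(\Sigma)=0$, and $h^0(X_b,V(\Sigma)_b)=r$ is independent of~$b$, the natural maps $\pi_\ast V(\Sigma)\otimes k(b)\to H^0(X_b,V(\Sigma)_b)$ are isomorphisms for every closed~$b$, compatibly with evaluation; writing $E:=X_b$ and $W:=V(\Sigma)_b$, the restriction of $\mathrm{ev}$ to $X_b$ is thereby identified with the evaluation map $\mathrm{ev}_W\colon H^0(E,W)\otimes_k\mathcal{O}_E\to W$. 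Here $E$ is integral, projective, of arithmetic genus~$1$ with at worst one node, $\omega_E\simeq\mathcal{O}_E$, and $W=V_b\otimes\mathcal{O}_E(\sigma(b))$ is \emph{locally free} (being the restriction of the vector bundle~$V$), semistable of rank~$r$ and degree~$r$, hence of slope~$1$; by Serre duality $h^1(E,W)=h^0(E,W^\vee)=0$ since the semistable sheaf $W^\vee$ has negative slope, whence $h^0(E,W)=\chi(W)=r$. As $\mathrm{ev}_W$ is a homomorphism of rank-$r$ locally free sheaves, it is an isomorphism at the generic point of~$E$ precisely when the fibre map $H^0(E,W)\to W\otimes k(p)$ is bijective for a general smooth point $p\in E$; both spaces having dimension~$r$, this is equivalent to injectivity, i.e.\ to $H^0(E,W(-p))=0$, i.e.\ to $\mathrm{Hom}_E(\mathcal{O}_E(p),W)=0$. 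Thus it suffices to prove: $\mathrm{Hom}_E(\mathcal{O}_E(p),W)=0$ for all but finitely many smooth points $p\in E$.

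I would prove this by induction on~$r$. For $r=1$, $W$ is a line bundle of degree~$1$, so $W\simeq\mathcal{O}_E(p_0)$ for a unique smooth point $p_0$, and $W(-p)$ is a non-trivial degree-$0$ line bundle — hence without sections — for every $p\neq p_0$. For $r\ge2$, suppose the conclusion fails; then, the locus being closed in the (irreducible) smooth locus of~$E$ by upper semicontinuity, $\mathrm{Hom}_E(\mathcal{O}_E(p),W)\neq0$ for every smooth point~$p$. Fix such a~$p$ and a nonzero morphism $\sigma\colon\mathcal{O}_E(p)\to W$: since $\mathcal{O}_E(p)$ and $W$ are semistable of the same slope~$1$, $\sigma$ is injective and (again by semistability of~$W$) has saturated image, so $Q:=W/\mathcal{O}_E(p)$ is semistable of slope~$1$ and rank $r-1$. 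Tensoring $0\to\mathcal{O}_E(p)\to W\to Q\to0$ by $\mathcal{O}_E(-q)$ for a smooth $q\neq p$ and using $H^0(E,\mathcal{O}_E(p-q))=H^1(E,\mathcal{O}_E(p-q))=0$ (Serre duality, $\omega_E\simeq\mathcal{O}_E$, and $p\neq q$), one gets $H^0(E,W(-q))\xrightarrow{\ \sim\ }H^0(E,Q(-q))$; hence $\mathrm{Hom}_E(\mathcal{O}_E(q),Q)\neq0$ for all smooth $q\neq p$, contradicting the inductive hypothesis applied to~$Q$.

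The step I expect to be the main obstacle is the case of a \emph{singular} (nodal) fibre. There $Q=W/\mathcal{O}_E(p)$, although torsion-free of rank $r-1$ and slope~$1$, need not be locally free at the node — this happens exactly when $\sigma$ vanishes at the node — and the base case $r=1$ of the curve statement genuinely fails for non-locally-free rank-$1$ sheaves such as $\nu_\ast\mathcal{O}_{\mathbb{P}^1}(1)$, with $\nu$ the normalization of~$E$. I would circumvent this either by showing that, for general~$p$, the morphism $\sigma$ can be chosen non-vanishing at the node — so $Q$ stays locally free and the induction proceeds — or by replacing the induction with the classification of semistable sheaves on the Weierstrass cubic, i.e.\ Atiyah's description for smooth fibres and its analogue for the nodal cubic: one decomposes $W$ into indecomposable summands and runs the same comparison along a defining short exact sequence, checking bijectivity at the generic point via the five lemma. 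Everything in the argument apart from the singular fibres is formal.
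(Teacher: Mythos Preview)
The paper does not prove this proposition; it is cited from \cite{mfw}, so there is no in-paper argument to compare against.

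Your base-change reduction and the reformulation as $\mathrm{Hom}_E(\mathcal{O}_E(p),W)=0$ for a general smooth $p$ are correct, and the induction is fine on smooth fibres. Your worry about the nodal fibre, however, rests on a degree miscalculation: since $\deg=\chi$ on a curve of arithmetic genus $1$, the sheaf $\nu_\ast\mathcal{O}_{\mathbb{P}^1}(1)$ has degree $\chi(\mathcal{O}_{\mathbb{P}^1}(1))=2$, not $1$, so it is not of slope $1$ and is not a counterexample. The unique non-locally-free torsion-free sheaf of rank $1$ and degree $1$ on the nodal cubic is $\nu_\ast\mathcal{O}_{\mathbb{P}^1}$, and for it the projection formula gives $\mathrm{Hom}_E(\mathcal{O}_E(p),\nu_\ast\mathcal{O}_{\mathbb{P}^1})\simeq H^0(\mathbb{P}^1,\mathcal{O}_{\mathbb{P}^1}(-1))=0$ for \emph{every} smooth $p$. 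Hence if you state your induction in the abelian category of \emph{torsion-free} semistable sheaves of slope $1$ on $E$ (rather than only locally free ones), both the base case and the inductive step go through unchanged: the image of $\sigma$ is still saturated by the same degree count, $Q=W/\mathcal{O}_E(p)$ is torsion-free and semistable of slope $1$, and the computation of $H^i(\mathcal{O}_E(p-q))$ is unaffected since $\mathcal{O}_E(p-q)$ is a line bundle. No separate ``choose $\sigma$ nonvanishing at the node'' manoeuvre is required. Alternatively, one can bypass the induction: a nonzero map $\mathcal{O}_E(p)\to W$ exhibits the stable object $\mathcal{O}_E(p)$ as a Jordan--H\"older constituent of $W$ in that finite-length abelian category; since $W$ has at most $r$ constituents and distinct smooth points give pairwise non-isomorphic line bundles, at most $r$ values of $p$ can occur.
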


This implies, in particular, that the determinant of \eqref{valua} is a non-zero map of line bundles on $X$, from
$$
\det\pi^\ast\pi_\ast V(\Sigma)\simeq\pi^\ast\det\pi_\ast V(\Sigma)
$$
to
$$
\det V(\Sigma)\simeq\det V\otimes\mathcal{O}_X(r\Sigma)\simeq\pi^\ast\delta\otimes\mathcal{O}_X(r\Sigma);
$$
thus, setting
\begin{equation}
\label{def of mu}
\mu:=(\det\pi_\ast V(\Sigma))^{-1}\otimes\delta=(\det\pi_\ast V(\Sigma))^{-1}\otimes\pi_\ast\det V\in\mathrm{Pic}(B),
\end{equation}
$\mathrm{det}(\mathrm{ev})$ can be considered as a non-zero section of the line bundle $\mathcal{O}_X(r\Sigma)\otimes\pi^\ast\mu$ on $X$, well defined up to multiplication by elements of $k^\times$; or as a curve $C_V$ (which is, in general, neither reduced nor irreducible) belonging to the linear system on $X$
$$
\left|r\Sigma+\pi^\ast\mu\right|.
$$
$C_V$ is called the \emph{spectral curve} of $V$;
the restriction of $\pi$ to $C_V$ is a finite morphism of degree $r$ from $C_V$ to $B$, called the \emph{spectral cover} of $V$. We remark that a slight modification of the previous construction allows one to define a spectral cover also in the case in which $V$ is assumed to be semistable not on every fiber, but only on a general fiber.

\subsubsection{}
 Set $\mathcal{S}_r:=\mathrm{R}^0\pi_\ast\mathcal{O}_X(r\Sigma)\simeq \bigoplus_{i\in\{2,\dots,r,0\}} \mathbb{L}^{-i}$. Then, by virtue of the isomorphism
$$
 H^0(X,\mathcal{O}_X(r\Sigma)\otimes\pi^\ast\mu)\simeq H^0(B,\mathcal{S}_r\otimes\mu),
$$
the map $\mathrm{\det}(\mathrm{ev})$ can also be considered as a
section of the bundle
$$
\mu\otimes \mathcal{S}_r\simeq\bigoplus_{i\in\{2,\dots,r,0\}}\mu\otimes \mathbb{L}^{-i};
$$
or, equivalently, as an $r$-tuple
$$
(s_2,\dots,s_r,s_0)\in\bigoplus_{i\in\{2,\dots,r,0\}} H^0(B,\mu\otimes\mathbb{L}^{-i}),
$$
well defined up to an overall scaling by a non-zero element of $k$.
It follows that the vector space $ H^0(B,\mu\otimes\mathbb{L}^{-i})$ is non-zero for some $i\in\{2,\dots,r,0\}$; thus, setting
$$
e:=\deg(\mu),
$$
we have $0\leqslant \deg(\mu\otimes\mathbb{L}^{-i})=e-id$, or $e\geqslant id$. In particular, $e$ is non-negative. The following lemma shows that this is nothing but a Bogomolov-type inequality (see also Section \ref{ultima sezione}, and \cite{mfw}, where the same result is proved with different techniques under the assumption of fiberwise \emph{regularity} on almost all fibers; we will go back later in this section to the notion of regularity):

\begin{lemma}
\label{e uguale c2}
Let $V$ be a locally free sheaf on $X$, fiberwise semistable and with vertical determinant. Then the degree $e$ of the line bundle $\mu$ defined in \eqref{def of mu} satisfies
$$
e=\mathrm{c}_2(V).
$$
\end{lemma}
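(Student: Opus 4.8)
The plan is to compute the Chern class $\mathrm{c}_2(V)$ by reducing everything to Chern-class identities on $X$, then pushing forward to $B$ and comparing degrees of line bundles. First I would record the Grothendieck--Riemann--Roch (or direct Chern-class) computation for the push-forward $\pi_\ast V(\Sigma)$. Since $V(\Sigma)$ restricts to a semistable degree-$r$ bundle on every closed fiber with $h^1=0$, we have $\mathrm{R}^1\pi_\ast V(\Sigma)=0$, so $\pi_! V(\Sigma) = \pi_\ast V(\Sigma)$ and GRR gives a clean formula for $\mathrm{ch}(\pi_\ast V(\Sigma))$ in terms of $\mathrm{ch}(V)$, $\Sigma$, and the Todd class of $\pi$ (equivalently of $\Omega_\pi \simeq \mathcal I_Z\otimes\pi^\ast\mathbb L$). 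In particular I would extract $\deg(\det\pi_\ast V(\Sigma)) = \mathrm{c}_1(\pi_\ast V(\Sigma))\cdot [\mathrm{pt}]$, which is the degree-$2$ part of the GRR pushforward.

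Next I would plug this into the definition $\mu=(\det\pi_\ast V(\Sigma))^{-1}\otimes\pi_\ast\det V$ from \eqref{def of mu}. Using that $\det V$ is vertical, $\deg(\pi_\ast\det V)=\frac12 \mathrm{c}_1(V)^2\cdot(\text{something})$; actually since $\det V\simeq\pi^\ast\delta$ we get $\mathrm{c}_1(V)^2 = 0$ and $\mathrm{c}_1(V)\cdot\Sigma = \deg\delta$, $\mathrm{c}_1(V)\cdot F = 0$ where $F$ is the fiber class. Then $e=\deg\mu = \deg\delta - \deg\det\pi_\ast V(\Sigma)$, and the GRR formula expresses $\deg\det\pi_\ast V(\Sigma)$ as $\deg\delta$ plus intersection numbers involving $\mathrm{c}_2(V)$, $\Sigma^2$, $\Sigma\cdot K_\pi$, and the length of $Z$. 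The terms not involving $\mathrm{c}_2(V)$ should, after using $\Sigma^2 = -d$ (i.e. $\mathcal O_\Sigma(\Sigma)\simeq\mathbb L^{-1}$), $\Sigma\cdot F=1$, $\mathrm{length}(Z)=12d$, and Kodaira's formula $\omega_X\simeq\pi^\ast(\omega_B\otimes\mathbb L)$, cancel out, leaving precisely $e=\mathrm{c}_2(V)$.

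An alternative, perhaps cleaner, route avoids GRR on the surface entirely: work with the section $\mathrm{det}(\mathrm{ev})$. Recall $\mathrm{det}(\mathrm{ev})$ is a section of $\mathcal O_X(r\Sigma)\otimes\pi^\ast\mu$ whose vanishing is the spectral curve $C_V$. Taking determinants in the restricted map $\mathrm{ev}_b$ fiberwise and analyzing the cokernel of $\mathrm{ev}:\pi^\ast\pi_\ast V(\Sigma)\to V(\Sigma)$, one gets an exact sequence $0\to\pi^\ast\pi_\ast V(\Sigma)\to V(\Sigma)\to Q\to 0$ with $Q$ supported on $C_V$ (a sheaf of rank $r-1$ less, or better a torsion sheaf along $C_V$ in the fiberwise-regular case), and then $\mathrm{ch}(V(\Sigma)) = \mathrm{ch}(\pi^\ast\pi_\ast V(\Sigma)) + \mathrm{ch}(Q)$; comparing second Chern characters and using that $\pi^\ast(\text{anything})$ has $\mathrm{c}_2$ a multiple of $F^2 = 0$ gives $\mathrm{c}_2(V(\Sigma))$ in terms of $[C_V]$-data, hence in terms of $e$ and $d$. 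One then untwists: $\mathrm{c}_2(V(\Sigma)) = \mathrm{c}_2(V) + (r-1)\mathrm{c}_1(V)\cdot\Sigma + \binom{r}{2}\Sigma^2$, and the extra terms are computed via $\mathrm{c}_1(V)\cdot\Sigma$ and $\Sigma^2=-d$. I expect either route to terminate in the desired equality.

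The main obstacle I anticipate is bookkeeping rather than conceptual: getting all signs and coefficients in the Todd/GRR expansion right, correctly handling the twist by $\mathcal O_X(\Sigma)$ (the $\binom{r}{2}\Sigma^2$ term is easy to mis-sign), and — if one uses the cokernel approach — justifying that the cokernel $Q$ of $\mathrm{ev}$ is genuinely a line bundle on $C_V$ (or at least has the expected Chern character) without assuming fiberwise regularity, since the lemma is stated only under fiberwise semistability. To handle the non-regular case I would note that $\mathrm{det}(\mathrm{ev})$ and hence $[C_V]=r\Sigma+\pi^\ast\mu$ is well-defined regardless, so I would prefer to phrase the computation so that only the \emph{class} $[C_V]\in H^2(X)$ enters — for instance by working with determinants of the complex $\pi_\ast V(\Sigma)\to V(\Sigma)$ and the general GRR identity $\det(\mathrm{ev})$ cuts out a divisor in $|r\Sigma+\pi^\ast\mu|$, which sidesteps any regularity hypothesis. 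The genus $g$ of $B$ should drop out entirely because $e$ is a degree on $B$ and the $\omega_B$-contributions to both $\deg\delta$ and $\deg\det\pi_\ast V(\Sigma)$ cancel; verifying this cancellation cleanly is the one place I would be most careful.
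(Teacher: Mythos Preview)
Your primary route --- apply Grothendieck--Riemann--Roch to $V(\Sigma)$ (using $\mathrm{R}^1\pi_\ast V(\Sigma)=0$) to extract $\deg\pi_\ast V(\Sigma)$, then read off $e=\deg\delta-\deg\pi_\ast V(\Sigma)$ --- is exactly the paper's proof, which simply records the GRR identity $\mathrm{ch}(\pi_!V(\Sigma))\mathrm{td}(B)=\pi_\ast(\mathrm{ch}(V(\Sigma))\mathrm{td}(X))$ and states the outcome $\deg\pi_\ast V(\Sigma)=\deg\delta-\mathrm{c}_2(V)$ without spelling out the intermediate cancellations you anticipate. Your alternative cokernel/spectral-curve computation is not used in the paper, and the regularity worries you raise there are precisely why the paper opts for the direct GRR route.
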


\begin{proof}
By \eqref{def of mu}, we have
\begin{equation}
\label{inter 2}
e=\deg(\delta)-\deg(\det\pi_\ast V(\Sigma))=\deg(\delta)-\deg(\pi_\ast V(\Sigma)),
\end{equation}
The degree of $\pi_\ast V(\Sigma)$ can be computed, for example, using Grothendieck-Riemann-Roch theorem: 
$$
\mathrm{ch}(\pi_!V(\Sigma))\mathrm{td}(B)=\pi_\ast(\mathrm{ch}(V(\Sigma))\mathrm{td}(X)).
$$
One finds that $\deg(\pi_\ast V(\Sigma))=\deg(\delta)-\mathrm{c}_2(V)$, which, substituted in \eqref{inter 2}, gives the claim $e=\mathrm{c}_2(V)$.

\end{proof}

Let us remark that if the second Chern number of $V$ satisfies the inequality 
$$
\mathrm{c}_2(V)=e<id
$$
for some $0< i\leqslant r$, then
$$
\deg(\mu\otimes\mathbb{L}^{-j})=e-jd\leqslant e-id<0
$$
for all indexes $j\geqslant i$. Thus, the component $s_j\in H^0(\mu\otimes\mathbb{L}^{-j})$ of $\mathrm{det}(\mathrm{ev})=(s_2,\dots,s_r,s_0)$ vanishes for $j\geqslant i$, and so the spectral curve of $V$ belongs to the image of the canonical injection of linear series
$$
\left|(i-1)\Sigma+\pi^\ast\mu\right|\hookrightarrow \left|r\Sigma+\pi^\ast\mu\right|,\qquad D\mapsto D+(r-i+1)\Sigma.
$$
In particular, for $i=r$, we find that $C_V\geqslant \Sigma$ is non-integral, while for $i=r-1$, $C_V\geqslant 2\Sigma$ is non-reduced. {So we have:}
\begin{lemma}
\label{le prop di spec cover}
The following implications hold:
\begin{enumerate}
\item[(i)] if $C_V$ is integral, then $\mathrm{c}_2(V)\geqslant rd$;
\item[(ii)] if $C_V$ is reduced, then $\mathrm{c}_2(V)\geqslant (r-1)d$.
\end{enumerate}

\end{lemma}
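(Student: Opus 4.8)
The plan is to prove each implication by contraposition, reducing everything to the explicit description of the spectral curve obtained above together with Lemma~\ref{e uguale c2}. Recall that $\det(\mathrm{ev})$ was identified, through $H^0(X,\mathcal{O}_X(r\Sigma)\otimes\pi^\ast\mu)\simeq\bigoplus_{i\in\{2,\dots,r,0\}}H^0(B,\mu\otimes\mathbb{L}^{-i})$, with an $r$-tuple $(s_2,\dots,s_r,s_0)$, and that $\deg\mu=e=\mathrm{c}_2(V)$; we assume $r\geqslant 2$ throughout.

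For (i), assume $\mathrm{c}_2(V)<rd$; the goal is to show $C_V$ is not integral. Then $\deg(\mu\otimes\mathbb{L}^{-r})=\mathrm{c}_2(V)-rd<0$, so $H^0(B,\mu\otimes\mathbb{L}^{-r})=0$, whence $s_r=0$. By the discussion preceding the statement, $C_V$ then lies in the image of the injection of linear series $|(r-1)\Sigma+\pi^\ast\mu|\hookrightarrow|r\Sigma+\pi^\ast\mu|$, $D\mapsto D+\Sigma$; that is, $C_V=\Sigma+D$ for some effective $D\in|(r-1)\Sigma+\pi^\ast\mu|$. The class $(r-1)\Sigma+\pi^\ast\mu$ is numerically non-trivial, since it meets a general fiber in $r-1>0$ points; hence $D\neq 0$, and $C_V$ is the sum of the prime divisor $\Sigma$ with a non-zero effective divisor, so it is reducible or non-reduced, and in either case not integral.

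For (ii) one argues in the same way: assuming $\mathrm{c}_2(V)<(r-1)d$ forces $\deg(\mu\otimes\mathbb{L}^{-j})<0$ for $j\in\{r-1,r\}$, hence $s_{r-1}=s_r=0$, and the same description (now with the injection $|(r-2)\Sigma+\pi^\ast\mu|\hookrightarrow|r\Sigma+\pi^\ast\mu|$, $D\mapsto D+2\Sigma$) gives $C_V\geqslant 2\Sigma$. Thus $\Sigma$ appears in $C_V$ with multiplicity at least two, and $C_V$ is non-reduced.

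I do not expect a real obstacle here: the substantive input is already contained in Lemma~\ref{e uguale c2} and in the structure isomorphism $\pi_\ast\mathcal{O}_X(r\Sigma)\simeq\bigoplus_i\mathbb{L}^{-i}$. The only point requiring a line of care is the passage from the vanishing of the top components $s_r$ (resp.\ $s_{r-1},s_r$) of $\det(\mathrm{ev})$ to the presence of $\Sigma$ (resp.\ $2\Sigma$) as a subdivisor of $C_V$; this is precisely the compatibility of the splitting of $\pi_\ast\mathcal{O}_X(r\Sigma)$ with the tower of inclusions $\mathcal{O}_X(k\Sigma)\hookrightarrow\mathcal{O}_X((k+1)\Sigma)$, each induced by multiplication with the tautological section of $\mathcal{O}_X(\Sigma)$ (whose divisor of zeros is $\Sigma$), i.e.\ the injection of linear series used above.
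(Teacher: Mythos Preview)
Your proof is correct and follows essentially the same route as the paper: the paper proves the lemma in the paragraph immediately preceding its statement, by the identical contrapositive argument (if $e<id$ then $s_j=0$ for $j\geqslant i$, whence $C_V\geqslant (r-i+1)\Sigma$), specialized to $i=r$ and $i=r-1$. Your version is slightly more explicit in verifying that the residual divisor $D$ in part (i) is non-zero and in flagging the assumption $r\geqslant 2$, but there is no substantive difference.
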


\subsection{Fiberwise regular bundles}

In this section we recall two properties, both from \cite{mfw}, of fiberwise regular bundles with vertical determinant on $X$, which will be useful in the following sections. 

First of all, let us recall the definition of \emph{regular bundle} on a fiber of $\pi$. To this end, let us fix an   integral, complete curve $E$ of arithmetic genus $1$ (thus $E$ is either non-singular, or it has exactly one node, or its has exactly one cusp). The degree of a coherent sheaf on $E$ is, by definition, its Euler-Poincar\'e characteristic. Thus one has the notions of slope, and hence of slope-(semi)stability, for coherent sheaves on $E$. 

Now let $\bar{J}$ be the set of isomorphism classes of torsion-free sheaves of rank $1$ and degree $0$ on $E$. If we denote by $J$   the group of isomorphism classes of line bundles of degree $0$ on $E$, then $J=\bar{J}$ if $E$ is non-singular, while in the nodal and cuspidal cases one has $\bar{J}=J\cup\{\mathcal{G}\}$ for some sheaf $\mathcal{G}\not\in J$.

Next, let us fix an element $\lambda$ of $\bar{J}$. A vector bundle $\mathcal{I}$ on $E$ is said to be \emph{strongly indecomposable (with Jordan-H\"older constituent $\lambda$)} if it admits a filtration whose subquotients are all isomorphic to the sheaf $\lambda$, and if, in addition, the equality
 $$
 \dim(\mathrm{Hom}(\lambda,\mathcal{I}))=1
 $$
 holds. As the name suggests, strong indecomposability implies indecomposability (with respect to the operation of direct sum); furthermore, if the sheaf $\lambda$ is invertible, then any indecomposable bundle admitting a filtration with subquotients isomorphic to $\lambda$ turns out to be strongly indecomposable (and, in fact, it is isomorphic to the \emph{Atiyah bundle} $\mathrm{I}_r(\lambda)$ \cite{atiyah} of the appropriate rank).

Finally, a vector bundle of degree $0$ on $E$ is said to be \emph{regular} 
if it isomorphic to a direct sum of strongly indecomposable bundles having pairwise distinct Jordan-Holder constituents. 
 
We are now ready to state the two properties of fiberwise regular sheaves on elliptic surfaces which we mentioned at the beginning of the section. The first of these will be used in the proof of Proposition \ref{uno dei risultati}:

\begin{Prop}
\label{sui fib reg}
Let $V$ be a vector bundle of rank $r$ on $X$, which has vertical determinant and is fiberwise regular. Let $\varphi:C\to B$ be the spectral cover of $V$. Then there is an isomorphism of rank $r$ bundles on $B$ (and of sheaves of $\mathcal{O}_B$-algebras)
$$
\pi_\ast\,\mathit{End}(V)\simeq\varphi_\ast\mathcal{O}_C.
$$
\end{Prop}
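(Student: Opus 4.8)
The plan is to compare the two sheaves of $\mathcal{O}_B$-algebras fiberwise and then globalize. First I would recall the key input from the theory of regular bundles on curves of arithmetic genus $1$: if $E$ is such a curve and $W$ is a regular degree-zero bundle of rank $r$ on $E$, then $\mathrm{End}(W)$ is itself a regular bundle of rank $r^2$ but with a very constrained structure — namely $h^0(E,\mathit{End}(W)) = r$, and in fact $H^0(E,\mathit{End}(W))$ is a commutative $k$-algebra of dimension $r$ (the "centralizer" description: $W$ being a direct sum of strongly indecomposable pieces with pairwise distinct Jordan–Hölder constituents forces its endomorphism algebra to be $\prod_i k[t]/(t^{n_i})$, which has dimension $\sum n_i = r$). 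Crucially, $H^0(E,\mathit{End}(W))$ as a $k$-algebra recovers the scheme-theoretic fiber over the corresponding point of the spectral cover: the spectral data at $b$ is exactly $\mathrm{Spec}$ of this algebra. So on each closed fiber $X_b$ one has a canonical isomorphism of $k$-algebras $H^0(X_b, \mathit{End}(V)_b) \simeq (\varphi_\ast\mathcal{O}_C)(b)$, functorially enough to hope for a global statement.

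Next I would set up the global comparison. Since $V$ has vertical determinant and is fiberwise semistable, $\mathit{End}(V) \simeq V \otimes V^\vee$ has trivial determinant on each fiber and is fiberwise semistable of degree $0$; moreover $h^1(X_b,\mathit{End}(V)_b) = h^0(X_b,\mathit{End}(V)_b) = r$ is constant in $b$ (using fiberwise regularity and the $h^0$ computation above, plus Serre duality on the fiber, noting $\omega_{X_b}\simeq\mathcal{O}_{X_b}$). By Grauert's theorem $\pi_\ast\mathit{End}(V)$ is locally free of rank $r$ and its formation commutes with base change, so $(\pi_\ast\mathit{End}(V))(b) \simeq H^0(X_b,\mathit{End}(V)_b)$ for every closed $b$. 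The sheaf $\pi_\ast\mathit{End}(V)$ carries an obvious $\mathcal{O}_B$-algebra structure from composition of endomorphisms. Now I would construct an explicit algebra homomorphism $\varphi_\ast\mathcal{O}_C \to \pi_\ast\mathit{End}(V)$, or the other way. The natural direction: the spectral cover comes with a tautological construction (via $\det(\mathrm{ev})$) giving an action of $\varphi_\ast\mathcal{O}_C$ on $\pi_\ast V(\Sigma)$, i.e., $\pi_\ast V(\Sigma)$ is a module over $\varphi_\ast\mathcal{O}_C$; this induces an $\mathcal{O}_B$-algebra map $\varphi_\ast\mathcal{O}_C \to \mathcal{E}nd_{\mathcal{O}_B}(\pi_\ast V(\Sigma))$, and one checks its image lands in $\pi_\ast\mathit{End}(V)$ (the "relative" endomorphisms, i.e. those commuting with the $\mathcal{O}_X$-module structure of $V(\Sigma)$ — equivalently coming from $\pi_\ast\mathit{End}(V(\Sigma)) = \pi_\ast\mathit{End}(V)$). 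This gives a morphism of locally free rank-$r$ sheaves of algebras; on the fiber over a general $b$ (where $C\to B$ is étale and $V|_{X_b}$ is a sum of $r$ distinct line bundles) it is visibly an isomorphism, hence it is an isomorphism at the generic point of $B$, hence — being a map of locally free sheaves of the same rank whose determinant is a section of $\mathcal{O}_B$ that is nonvanishing generically — I must still rule out vanishing at special points.

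The main obstacle, as I see it, is precisely the behavior over the \emph{bad} points of $B$: the points $b$ where $C\to B$ is ramified or where $V|_{X_b}$ fails to be a sum of distinct line bundles (e.g. $V|_{X_b}$ involves an Atiyah bundle $\mathrm{I}_n(\lambda)$ with $n\geq 2$, or the fiber $X_b$ is nodal and $\lambda$ is the non-locally-free sheaf $\mathcal{G}$). At such a $b$ I cannot argue by genericity; instead I would argue that the fiber map $(\varphi_\ast\mathcal{O}_C)(b) \to (\pi_\ast\mathit{End}(V))(b)$ is an isomorphism of $r$-dimensional $k$-algebras directly, using the structural description of regular bundles: $H^0(X_b,\mathit{End}(V)_b)$ is the centralizer algebra $\prod_i k[t_i]/(t_i^{n_i})$, which is exactly the local ring structure of $C$ over $b$ as encoded in the spectral construction (the $n_i$ being the multiplicities of the points of $C$ over $b$, read off from $\det(\mathrm{ev})$). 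Establishing that these two algebra structures coincide, and not merely that the vector spaces have the same dimension, is the delicate point — it amounts to checking that the module $\pi_\ast V(\Sigma)$ over $\varphi_\ast\mathcal{O}_C$ is, fiberwise at every $b$, a \emph{faithful} module generated appropriately, so that $\varphi_\ast\mathcal{O}_C \to \mathrm{End}$ is injective with the right image even at the non-reduced/ramified points. Once the fiber maps are all isomorphisms, Nakayama (the source and target being finite locally free of the same rank over $B$) finishes the argument. I would lean on \cite{mfw} and \cite{atiyah} for the fiberwise endomorphism computations and on the flatness/base-change apparatus above for the globalization.
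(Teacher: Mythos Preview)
The paper does not prove this proposition: it is stated in Section~\ref{due} as one of two facts ``both from \cite{mfw}'' that are recalled without argument. So there is no proof in the paper to compare against; your sketch should be measured against the original source (Friedman--Morgan--Witten).

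Your fiberwise analysis is correct and is indeed the heart of the matter: for a regular bundle $W=\bigoplus_i\mathrm I_{n_i}(\lambda_i)$ on a genus-one curve with the $\lambda_i$ pairwise distinct, one has $H^0(\mathit{End}(W))\simeq\prod_i k[t]/(t^{n_i})$, which matches the structure sheaf of the length-$r$ subscheme $\sum n_ip_i$ of the fiber cut out by the spectral curve. Combined with Grauert/base change, this shows both $\pi_\ast\mathit{End}(V)$ and $\varphi_\ast\mathcal O_C$ are locally free of rank~$r$ with isomorphic fibers over every closed point of $B$.

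The gap is in the step where you produce the global algebra map. You write that ``the spectral cover comes with a tautological construction giving an action of $\varphi_\ast\mathcal O_C$ on $\pi_\ast V(\Sigma)$,'' but the spectral curve in this paper is defined only as the divisor of $\det(\mathrm{ev})$; nothing in that definition hands you an $\mathcal O_C$-module structure on $V(\Sigma)$ or on $\pi_\ast V(\Sigma)$. (In the usual Hitchin/BNR story the module structure comes first and the spectral curve is its support; here the logic runs the other way.) What \cite{mfw} actually do is closer to the reverse direction: they first establish, using regularity, that $\pi_\ast\mathit{End}(V)$ is a \emph{commutative} $\mathcal O_B$-algebra (your fiberwise computation gives this), so that $\mathrm{Spec}_B(\pi_\ast\mathit{End}(V))$ is a finite degree-$r$ cover of $B$, and then identify this cover with $C$ inside $X$ by showing that the tautological action of $\pi_\ast\mathit{End}(V)$ on $\pi_\ast V(\Sigma)$ has ``characteristic polynomial'' equal to $\det(\mathrm{ev})$. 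Equivalently, one shows that $V$ (in the regular case) is the pushforward of a line bundle on $C$, after which $\mathit{End}(V)\simeq\varphi_\ast\mathcal O_C$ is immediate. Either way, the missing ingredient in your sketch is a \emph{canonical} morphism between the two algebras, not just a fiberwise dimension count; once you have such a morphism, your Nakayama argument at the end is fine.
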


%
Finally, the following sufficient condition of regularity on a fiber will play a role in the proof Proposition \ref{conj rel}:

\begin{Prop}
\label{sem implies reg}
Let $V$ be a fiberwise semistable bundle on $X$ with vertical determinant, and let $C\to B$ be the spectral cover of $V$. Furthermore, let $b$ be a closed point of the base $B$ such that the curve $C$ is non-singular at all points lying over $b$. Then the restriction of $V$ to the fiber $X_b$ is regular. In particular, if $C$ is non-singular, then the bundle $V$ is fiberwise regular.
\end{Prop}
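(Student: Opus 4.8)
The plan is to reduce the statement to a claim about the restriction $W:=V|_{X_b}$, and to recover the structure of $W$ from the cokernel of the evaluation map used to build the spectral cover. Write $\varphi:C\to B$ for the spectral cover of $V$ (so that $C=C_V$), set $p_0:=\sigma(b)$, and let $\widehat V$ be the cokernel of $\mathrm{ev}:\pi^\ast\pi_\ast V(\Sigma)\to V(\Sigma)$. First I would observe that $\mathrm{ev}$ is injective: it is an isomorphism at the generic point of every closed fiber, hence generically an isomorphism on the integral surface $X$, and its source is locally free; being an injection of locally free sheaves of the same rank $r$, its zeroth Fitting ideal is the principal ideal $(\det\mathrm{ev})=\mathcal{I}_C$. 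Consequently $\widehat V$ is a coherent $\mathcal{O}_C$-module, finite over $B$ through $\varphi$, with $\mathrm{Supp}\,\widehat V=|C|$.

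Next I would record two properties of $\widehat V$. Pushing the sequence $0\to\pi^\ast\pi_\ast V(\Sigma)\xrightarrow{\mathrm{ev}}V(\Sigma)\to\widehat V\to0$ forward to $B$, and using $\pi_\ast\mathcal{O}_X=\mathcal{O}_B$ together with the triangle identities for $\pi^\ast\dashv\pi_\ast$ (which show that the resulting map $\pi_\ast\pi^\ast\pi_\ast V(\Sigma)\to\pi_\ast V(\Sigma)$ is an isomorphism), the vanishing $\mathrm{R}^1\pi_\ast V(\Sigma)=0$, the projection formula, and the isomorphism $\mathrm{R}^1\pi_\ast\mathcal{O}_X\simeq\mathbb{L}^{-1}$, one obtains $\varphi_\ast\widehat V\simeq\pi_\ast V(\Sigma)\otimes\mathbb{L}^{-1}$. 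In particular $\varphi_\ast\widehat V$ is locally free of rank $r$ on $B$, so $\widehat V$ is flat over $B$. On the other hand, since $\mathrm{R}^1\pi_\ast V(\Sigma)=0$, cohomology and base change identify $\widehat V|_{X_b}$ with the cokernel $Q$ of $\mathrm{ev}_b:H^0(X_b,W(p_0))\otimes\mathcal{O}_{X_b}\to W(p_0)$, a length-$r$ sheaf supported on the divisor $D:=C\cap X_b\in|rp_0|$; this $Q$ is exactly the spectral sheaf attached to $W$ on the genus-one curve $X_b$ in \cite{mfw}.

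Now I would use the hypothesis. Choose an affine open $S\ni b$ of $B$ small enough that $C':=\varphi^{-1}(S)$ is regular, which is possible because $C$ is non-singular at each of the finitely many points of $\varphi^{-1}(b)$ and smoothness is an open condition. Over $C'$, $\widehat V$ is coherent, flat and finite of degree $r$ over $S$, and supported on every component of $C'$; comparing generic ranks over $S$ (both $\varphi_\ast\widehat V$ and $\varphi_\ast\mathcal{O}_{C'}$ have rank $r$) forces $\widehat V$ to have generic rank $1$ on each component of $C'$. Since $C'$ is a regular curve, $\widehat V$ splits as the direct sum of its torsion subsheaf $\mathcal{T}$ and an invertible sheaf; but if $\mathcal{T}\neq0$, then $\varphi_\ast\widehat V$ would split off a non-zero torsion $\mathcal{O}_S$-module, contradicting the local freeness of $\varphi_\ast\widehat V$. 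Hence $\widehat V$ is invertible over $C'$, and therefore $Q=\widehat V|_{X_b}$ is an invertible $\mathcal{O}_D$-module.

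Finally I would conclude by the spectral correspondence on $X_b$ recalled from \cite{mfw}: a fiberwise-semistable rank-one datum of this kind corresponds to a regular bundle precisely when the sheaf on the spectral divisor is invertible. Concretely, writing $D=\sum_j e_j q_j$ with the $q_j$ distinct, so that $\mathcal{O}_D=\prod_j\mathcal{O}_{D,q_j}$ with each factor Artinian local of length $e_j$ and $\sum_j e_j=r$, an invertible $\mathcal{O}_D$-module is $\bigoplus_j\mathcal{O}_{D,q_j}$, whose transform is $\bigoplus_j\mathrm{I}_{e_j}(\lambda_{q_j})$, a direct sum of strongly indecomposable bundles whose Jordan-H\"older constituents $\lambda_{q_j}$ are pairwise distinct; thus $W=V|_{X_b}$ is regular. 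The last assertion then follows at once: if $C$ is non-singular it is non-singular over every $b\in B(k)$, and $V$ is fiberwise semistable, so $V$ is fiberwise regular. I expect the crux to be the step extracting invertibility of $\widehat V$ near $X_b$, which genuinely combines the $B$-flatness of $\widehat V$ (hence the identification $\varphi_\ast\widehat V\simeq\pi_\ast V(\Sigma)\otimes\mathbb{L}^{-1}$) with the regularity of $C$ along the fiber, together with making sure the spectral correspondence is invoked in the correct form when $X_b$ is a nodal fiber.
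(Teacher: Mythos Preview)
The paper does not give its own proof of this proposition: it is quoted as a result from \cite{mfw} and no argument is supplied. Your proposal is therefore not competing with a proof in the paper but rather reconstructing the one from \cite{mfw}, and it does so faithfully. The key ingredients --- forming the cokernel $\widehat V$ of the evaluation map, showing that $\varphi_\ast\widehat V\simeq\pi_\ast V(\Sigma)\otimes\mathbb L^{-1}$ is locally free of rank $r$ (whence $\widehat V$ is $B$-flat), using regularity of $C$ near $\varphi^{-1}(b)$ together with the rank count $\sum m_id_i=r=\sum d_i$, $m_i\geqslant 1$, to force $\widehat V$ to be invertible there, and then invoking the spectral correspondence on $X_b$ --- are exactly the steps in \cite{mfw}, and each is justified. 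Two small remarks: the passage from $\mathrm{Fitt}_0(\widehat V)=\mathcal I_C$ to ``$\widehat V$ is an $\mathcal O_C$-module'' uses Cramer's rule ($\det(\mathrm{ev})\cdot V(\Sigma)\subset\mathrm{im}(\mathrm{ev})$), which you might make explicit; and, as you yourself note, when $X_b$ is nodal and one of the points $q_j\in D$ is the node, the identification of the corresponding summand with a strongly indecomposable bundle whose Jordan--H\"older constituent is the non-locally-free sheaf $\mathcal G\in\bar J\setminus J$ requires the nodal version of the correspondence from \cite{mfw} rather than just the Atiyah classification, so the citation there is essential rather than decorative.
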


\medskip
\section{Higgs bundles on elliptic surfaces}
\label{3}

\subsection{Higgs bundles}

Let $Y$ be a non-singular scheme, and let $V$ be a locally free sheaf on $Y$. A Higgs field on $V$ \cite{simp 1, simp 2, hitchin} is a morphism of $\mathcal{O}_Y$-modules $
\phi:V\to V\otimes\Omega_Y$, i.e., a global section of the bundle $\mathit{End}(V)\otimes\Omega_Y$, satisfying the \emph{integrability condition} (this is automatic if $Y$ is a curve)
$$
\phi\wedge\phi =0\quad\mbox{in}\quad\mathrm{Hom}(V,V\otimes\bigwedge^2\Omega_Y),
$$
where $\phi\wedge\phi:V\to V\otimes\bigwedge^2\Omega_Y $ denotes the composition
$$
V\xrightarrow{\phi}V\otimes\Omega_Y\xrightarrow{\phi\otimes 1} V\otimes\Omega_Y\otimes \Omega_Y\xrightarrow{1\otimes\wedge} V\otimes\bigwedge^2\Omega_Y;
$$
a Higgs bundle on $Y$ is a pair $(V,\phi)$, where $V$ is a locally free sheaf on $Y$, and $\phi$ a Higgs field on $V$. The dual of a map $\phi:V\to V\otimes\Omega_Y$ can also be regarded as a map $\Theta_Y\to\mathit{End}(V)$, 
which   induces a morphism of sheaves of (not necessarily commutative) $\mathcal{O}_Y$-algebras from the tensor algebra $\mathrm{T}\Theta_Y:=\bigoplus_{i\in\mathbb{Z}_{\geqslant 0}}\Theta_Y^{\otimes i}$ of $\Theta_Y$ to $\mathit{End}(V)$. The integrability condition is then equivalent to the requirement that the last morphism factors through the projection $\mathrm{T}\Theta_Y\to\mathrm{Sym}\,\Theta_Y$, thus defining a structure of $\mathrm{Sym}\,\Theta_Y$-module on $V$. 

{This} definition can be generalized, in a straightforward way, to that of \emph{Higgs sheaf}. 

\subsubsection{Operations on Higgs bundles}
\label{operations}
We now briefly recall some operations on Higgs bundles which will be useful in Section \ref{ultima sezione}. First of all, one has a natural notion of \emph{pull-back} for Higgs bundles: for a morphism $f:Z\to Y$ of non-singular schemes, and for a Higgs bundle $\mathcal{V}=(V,\phi)$ on $Y$, one sets $f^\ast\mathcal{V}:=(f^\ast V,\psi)$, where $\psi:f^\ast V\to f^\ast V\otimes\Omega_Z$ is the composition
$$
f^\ast V\xrightarrow{f^\ast\phi}f^\ast(V\otimes\Omega_Y)\xrightarrow{\simeq}f^\ast V\otimes f^\ast\Omega_Y\xrightarrow{1\otimes f^\ast}f^\ast V\otimes\Omega_Z.
$$
There is also a natural notion of \emph{tensor product} of two Higgs bundles defined on the same scheme, say $\mathcal{V}_1=(V_1,\phi_1),\mathcal{V}_2=(V_2,\phi_2)$: one sets
$$
\mathcal{V}_1\otimes\mathcal{V}_2:=(V_1\otimes V_2,\phi_1\otimes 1+1\otimes \phi_2).
$$
Finally, one can define the \emph{dual} of a Higgs bundle $\mathcal{V}=(V,\phi)$ on a scheme $Y$ as $\mathcal{V}^\vee:=(V^\vee,-\psi)$, where $\psi:V^\vee\to V^\vee\otimes\Omega_Y$ is the composition
$$
V^\vee\xrightarrow{\simeq}V^\vee\otimes\mathcal{O}_Y\xrightarrow{1\otimes\mathrm{tr}^\vee} V^\vee\otimes\Omega_Y^\vee\otimes\Omega_Y\xrightarrow{\phi^\vee\otimes 1}V^\vee\otimes\Omega_Y.
$$

\subsubsection{The cone of Higgs fields on $V$}
Let $E:=\mathit{End}(V)$. Then the map
$$
 H^0( E\otimes\Omega_Y)\to
 H^0( E\otimes\bigwedge^2\Omega_Y),\qquad\phi\mapsto\phi\wedge\phi,
$$
factors as
$$
 H^0( E\otimes\Omega_Y)\xrightarrow{\phi\mapsto\phi^2}\mathrm{Sym}^2 H^0( E\otimes\Omega_Y)\xrightarrow{\ell} H^0( E\otimes\bigwedge^2\Omega_Y),
$$
where $\ell$ is $k$-linear. The image of the dual map $\ell^\vee: H^0( E\otimes\bigwedge^2\Omega_Y)^\vee\to \mathrm{Sym}^2 H^0( E\otimes\Omega_Y)^\vee$ defines a linear system of quadrics in the projective space $\mathbb{P}( H^0( E\otimes\Omega_Y))$; let $\mathfrak{B}$ be its base scheme. Then the set $\mathcal{H}_V$ of Higgs fields on $V$ is just the set of closed points of the affine cone over the projective scheme $\mathfrak{B}$. The cone $\mathcal{H}_V$ might very well be a vector subspace of $ H^0( E\otimes\Omega_Y)$, and in fact this is what will happen in the cases we will analyze in the following.

\subsubsection{The Hitchin base} Let $(V,\phi)$ be a Higgs bundle on a complete, non-singular scheme $Y$, and let  $r\geqslant 1$ be the rank of $V$. The characteristic polynomial  $\det(T-\phi)$ of the twisted endomorphism $\phi\in H^0(\mathit{End}(V)\otimes\Omega_Y)$ has the form
$$
T^r+a_1T^{r-1}+\cdots+a_{r-1}T+a_r,
$$
with $a_i\in H^0(\mathrm{Sym}^i\Omega_Y)$. The affine space $\mathcal{B}_{Y,r}$   associated to the vector space
$$
\bigoplus_{i=1}^r H^0(\mathrm{Sym}^i\Omega_Y)
$$
is called the \emph{Hitchin base} (for rank $r$ Higgs bundles on $Y$). Let $M^{\mathrm{Higgs}}_{Y,r}$ be the moduli space of rank $r$, semistable Higgs bundles on $Y$ \cite{simp 3}. Then the association $(V,\phi)\mapsto(a_1,\dots,a_r)$ induces a morphism 
$$
M^{\mathrm{Higgs}}_{Y,r}\to \mathcal{B}_{Y,r},
$$
called the \emph{Hitchin fibration}. It is a useful tool for the study of $M^{\mathrm{Higgs}}_{Y,r}$.

Our first result concerning Higgs bundles on elliptic surfaces is the following Proposition, which suggests a strong relation between Higgs bundles on the total space and on the base of an elliptic surface, and whose proof follows immediately from Proposition \ref{le potenze}:

\begin{Prop}
\label{la base}
Let $\pi:X\to B$ be a non-isotrivial Weierstrass fibration with nodal singular fibers. Then, for each $r\geqslant 1$, there is a canonical isomorphism
\begin{equation}
\label{iso basi}
\pi^\ast:\mathcal{B}_{B,r}\to\mathcal{B}_{X,r}.
\end{equation}
\end{Prop}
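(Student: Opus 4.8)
The plan is to deduce Proposition \ref{la base} directly from Proposition \ref{le potenze}. Recall that the Hitchin base $\mathcal{B}_{Y,r}$ is the affine space associated to the vector space $\bigoplus_{i=1}^r H^0(Y,\mathrm{Sym}^i\Omega_Y)$, so it suffices to exhibit, for each $i$ in the range $1\leqslant i\leqslant r$, a canonical isomorphism of $k$-vector spaces
$$
\pi^\ast: H^0(B,\mathrm{Sym}^i\Omega_B)\xrightarrow{\;\simeq\;} H^0(X,\mathrm{Sym}^i\Omega_X),
$$
and then take the direct sum over $i$ and pass to the associated affine spaces.

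First I would recall that $B$ is a smooth projective curve, so $\mathrm{Sym}^i\Omega_B\simeq\omega_B^i$ canonically, and thus $ H^0(B,\mathrm{Sym}^i\Omega_B)= H^0(B,\omega_B^i)$. The map \eqref{la mappa da sym bla a sym bla} furnishes a morphism of $\mathcal{O}_X$-modules $\pi^\ast\omega_B^i\to\mathrm{Sym}^i\Omega_X$, whence, taking global sections and using $ H^0(X,\pi^\ast\omega_B^i)= H^0(B,\pi_\ast\pi^\ast\omega_B^i)= H^0(B,\omega_B^i\otimes\pi_\ast\mathcal{O}_X)= H^0(B,\omega_B^i)$ (the last step by Proposition \ref{immagini multipli sezione}(i), or equivalently the $r=0$ case of Proposition \ref{le potenze}), we obtain a linear map $ H^0(B,\omega_B^i)\to H^0(X,\mathrm{Sym}^i\Omega_X)$. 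By adjunction this is precisely the map on global sections induced by the adjoint $\omega_B^i\to\pi_\ast\mathrm{Sym}^i\Omega_X$ of \eqref{la mappa da sym bla a sym bla}, which Proposition \ref{le potenze} asserts is an isomorphism of sheaves on $B$. Applying $ H^0(B,-)$ to a sheaf isomorphism yields an isomorphism on global sections, so $\pi^\ast: H^0(B,\omega_B^i)\to H^0(X,\mathrm{Sym}^i\Omega_X)$ is an isomorphism for every $i\geqslant 1$ (indeed every $i\geqslant 0$). Summing over $i=1,\dots,r$ gives a canonical isomorphism of vector spaces $\bigoplus_{i=1}^r H^0(B,\mathrm{Sym}^i\Omega_B)\xrightarrow{\simeq}\bigoplus_{i=1}^r H^0(X,\mathrm{Sym}^i\Omega_X)$, and taking associated affine spaces produces the desired isomorphism $\pi^\ast:\mathcal{B}_{B,r}\to\mathcal{B}_{X,r}$. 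Canonicity is clear since every step — the identification $\mathrm{Sym}^i\Omega_B\simeq\omega_B^i$, the map \eqref{la mappa da sym bla a sym bla}, the adjunction, and Proposition \ref{le potenze} — is canonical.

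Frankly, there is no real obstacle here: the entire content has been front-loaded into Proposition \ref{le potenze}, and this proposition is a formal consequence. The only point requiring a word of care is the compatibility claim, namely that the map on $ H^0$ induced by \eqref{la mappa da sym bla a sym bla} agrees with the one coming from the adjoint sheaf morphism of Proposition \ref{le potenze}; this is just the standard adjunction identity $ H^0(X,\pi^\ast\mathcal{F}\otimes\mathcal{G})\supseteq\mathrm{Hom}(\mathcal{F},\pi_\ast\mathcal{G})$-type bookkeeping and can be dispatched in one sentence. I would therefore present the proof as essentially a two-line deduction, perhaps remarking explicitly that under this isomorphism the characteristic polynomial data $(a_1,\dots,a_r)$ of a Higgs bundle on $X$ corresponds, coefficient by coefficient, to a tuple of pluricanonical forms on $B$, which is the structural statement motivating the proposition and foreshadowing the analysis of vertical and scalar Higgs fields in the sections that follow.
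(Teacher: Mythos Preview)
Your proposal is correct and matches the paper's own approach exactly: the paper simply states that the proposition follows immediately from Proposition \ref{le potenze}, and your argument spells out precisely that deduction by taking global sections of the sheaf isomorphism $\omega_B^i\to\pi_\ast\mathrm{Sym}^i\Omega_X$ and summing over $i$.
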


Let us remark that, if we use the isomorphism \eqref{iso basi} to identify $\mathcal{B}_{X,r}$ with $\mathcal{B}_{B,r}$, we have that the operation of pull-back via $\pi$ induces a rational map of $\mathcal{B}_{B,r}$-schemes 
$$
M_{B,r}^{\mathrm{Higgs}}\dashrightarrow M_{X,r}^{\mathrm{Higgs}}.
$$
Moreover, using again Proposition \ref{le potenze}, we get an isomorphism of $\mathcal{O}_B$-algebras
$$
\pi_\ast\mathrm{Sym}\,\Theta_X\simeq\mathrm{Sym}\,\mathbb{L}^{-1};
$$
thus, the operation of push-forward via $\pi$ induces a rational map from the moduli space of semistable Higgs bundles on $X$ to the moduli space of semistable $\mathbb{L}$-valued pairs on $B$ (see, e.g.,  \cite{nitsure} for the notions of semistable pairs and their moduli).

\subsection{Vertical Higgs fields}

From now on, let $(\pi:X\to B,\Sigma)$ be a non-isotrivial Weierstrass fibration with nodal singular fibers. Let $V$ be a vector bundle on $X$. Then, starting from a linear map $\psi:V\to V\otimes\pi^\ast\omega_B$, we get a linear map $\phi:V\to V\otimes\Omega_X$, by composing $\psi$ on the left with
$$
1\otimes\pi^\ast:V\otimes\pi^\ast\omega_B\to V\otimes\Omega_X.
$$
Moreover, the map $\phi$ obtained in this way satisfies the integrability condition, i.e., it is a Higgs field on $V$; and the correspondence $\psi\mapsto\phi$ is injective, since it can be regarded as the map on global sections induced by the injection
\begin{equation}
\label{lisomorphidue}
1\otimes\pi^\ast:\mathit{End}(V)\otimes\pi^\ast\omega_B\to\mathit{End}(V)\otimes\Omega_X.
\end{equation}
It follows that any bundle $V$ on an elliptic fibration admits a family of Higgs fields parametrized by the vector space $\mathrm{Hom}(V,V\otimes\pi^\ast\omega_B)$, which we will refer to as \emph{vertical} Higgs fields. We now show that, under suitable assumptions on $V$, there are no other Higgs fields. More precisely, we have the following:

\begin{Prop}
\label{propo}
Let $V$ be a rank $r$ vector bundle on $X$ with vertical determinant, and let $E$ be its sheaf of endomorphisms. Suppose that $V$ satisfies the following two assumptions:
\begin{enumerate}
\item[(i)] $V$ is semistable on a general closed fiber of $\pi$;
\item[(ii)] the spectral cover of $V$ is reduced.
\end{enumerate}
Then the natural map
\begin{equation}
\label{lisomorphismo}
\pi_\ast(1\otimes\pi^\ast):\pi_\ast(E\otimes\pi^\ast\omega_B)\to \pi_\ast(E\otimes\Omega_X)
\end{equation}
is an isomorphism of rank $r$ bundles on $B$.
\end{Prop}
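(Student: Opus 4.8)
The plan is to prove that the inclusion \eqref{lisomorphismo} is an isomorphism by a rank-plus-injectivity argument, reducing everything to the restriction to a general fiber $X_b$, where the structure of $V_b$ and $\Omega_{X,b}$ is completely understood. First I would observe that $E\otimes\pi^\ast\omega_B$ and $E\otimes\Omega_X$ are torsion-free on $X$ (they are locally free), so by Lemma \ref{lemma utile} their direct images are locally free on $B$, and the map \eqref{lisomorphismo} is a morphism of locally free sheaves; since a morphism of vector bundles on a smooth curve which is injective on the generic fiber (or which is injective with a locally free cokernel of rank $0$) is an isomorphism, it suffices to prove two things: (a) both sheaves have the same rank $r$, and (b) the map is generically an isomorphism, i.e. an isomorphism after restriction to a general closed fiber $X_b$.

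For the rank computation I would restrict to a general $b\in B(k)$. By hypothesis (i), $V_b$ is semistable of degree $0$ on $X_b$; combined with hypothesis (ii) that the spectral cover is reduced, Proposition \ref{sem implies reg}-type reasoning (or directly the reducedness of $C_V$) forces $V_b$ to be regular, hence a direct sum $\bigoplus_j \mathrm{I}_{r_j}(\lambda_j)$ of Atiyah bundles with \emph{pairwise distinct} degree-zero Jordan--Hölder constituents $\lambda_j$. A standard computation with Atiyah bundles on a genus-one curve then gives $h^0(X_b,\mathit{End}(V_b))=\sum_j r_j = r$ and, by Serre duality on the genus-one curve $X_b$, $h^1(X_b,\mathit{End}(V_b))=r$ as well. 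Since $\pi^\ast\omega_B$ restricts trivially to $X_b$, we get $h^0(X_b, E_b\otimes\pi^\ast\omega_B(b))=r$, so $\pi_\ast(E\otimes\pi^\ast\omega_B)$ has rank $r$ by Grauert. For the target, I would use the exact sequence $\mathcal{E}_b$ from Section \ref{uno}: restricting \eqref{exact sequence of kahler differntials} to general $b$ gives $0\to\mathcal{O}_{X_b}\to\Omega_{X,b}\to\mathcal{O}_{X_b}\to 0$ (a non-split self-extension of $\mathcal{O}_{X_b}$, the rank-$2$ Atiyah bundle $\mathrm{I}_2$, by non-isotriviality), so $E_b\otimes\Omega_{X,b}$ sits in $0\to E_b\to E_b\otimes\Omega_{X,b}\to E_b\to 0$; taking cohomology and using $h^0(E_b)=h^1(E_b)=r$ shows $h^0(X_b,E_b\otimes\Omega_{X,b})\leqslant 2r$ with the extreme value attained only if the connecting map $H^0(E_b)\to H^1(E_b)$ vanishes — and the point is to see it does \emph{not} vanish but has rank $r$, so that $h^0(X_b,E_b\otimes\Omega_{X,b})=r$, again giving $\pi_\ast(E\otimes\Omega_X)$ rank $r$.

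The key computation — and the main obstacle — is therefore identifying the connecting homomorphism $H^0(X_b,E_b)\xrightarrow{\partial} H^1(X_b,E_b)$ of the sequence $0\to E_b\to E_b\otimes\mathrm{I}_2\to E_b\to 0$ and showing it is an isomorphism. This map is cup product with the extension class of $\mathrm{I}_2$, i.e. with the image of the Kodaira--Spencer section $\xi(\mathcal{E}/B)(b)\in H^1(X_b,\mathcal{O}_{X_b})$, composed into $\mathit{End}(E_b)$ via the module structure; concretely it is $(-)\mapsto \xi(b)\cup(-)$. On the regular bundle $E_b=\bigoplus_j \mathit{End}(\mathrm{I}_{r_j}(\lambda_j))\oplus\bigoplus_{j\neq l}\mathit{Hom}(\mathrm{I}_{r_j}(\lambda_j),\mathrm{I}_{r_l}(\lambda_l))$, the off-diagonal pieces with $\lambda_j\neq\lambda_l$ contribute nothing to $H^0$ or $H^1$, so it reduces to the case of a single Atiyah block $\mathit{End}(\mathrm{I}_n(\lambda))$, where one checks directly (using $\mathrm{I}_n=\mathrm{Sym}^{n-1}\mathrm{I}_2$ and the explicit description of cup product by $\xi$ on the genus-one curve, cf. Atiyah \cite{atiyah}) that cup product with the generator of $H^1(\mathcal{O}_{X_b})$ is an isomorphism $H^0(\mathit{End}(\mathrm{I}_n(\lambda)))\xrightarrow{\sim}H^1(\mathit{End}(\mathrm{I}_n(\lambda)))$ — both are one-dimensional, spanned by the identity and (dually) by the trace class, and $\xi$ pairs them non-degenerately because $\mathrm{I}_2$ is a \emph{non-split} extension. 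This establishes (b) simultaneously: on the general fiber the inclusion $E_b\otimes\pi^\ast\omega_B(b)\hookrightarrow E_b\otimes\Omega_{X,b}$ induces, after taking $H^0$, the inclusion of the first factor $E_b\hookrightarrow E_b\otimes\mathrm{I}_2$ followed by $H^0$, which is an isomorphism onto all of $H^0(E_b\otimes\mathrm{I}_2)$ precisely because $\partial$ is injective. With ranks equal and the map a generic isomorphism, \eqref{lisomorphismo} is an isomorphism of rank $r$ bundles on $B$.
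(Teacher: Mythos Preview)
Your overall architecture is right, but the argument breaks at the step where you analyze a single Atiyah block. You deduce from reducedness of the spectral cover only that $V_b$ is \emph{regular}, i.e.\ $V_b\simeq\bigoplus_j \mathrm{I}_{r_j}(\lambda_j)$ with the $\lambda_j$ distinct, and then assert that for each block the cup-product map $H^0(\mathit{End}(\mathrm{I}_n(\lambda)))\to H^1(\mathit{End}(\mathrm{I}_n(\lambda)))$ is an isomorphism because ``both are one-dimensional''. This is false as soon as $n\geqslant 2$: on a genus-one curve $h^0(\mathit{End}(\mathrm{I}_n))=n$, and in fact $h^0(\mathit{End}(\mathrm{I}_n)\otimes\mathrm{I}_2)=2n-1$, so the connecting map $\partial$ has a kernel of dimension $n-1$. (This is exactly the computation in the paper's remark following the Proposition, explaining why assumption (ii) cannot be dropped: if $V_b\simeq\mathrm{I}_r$ on a general fiber, then $\pi_\ast(E\otimes\Omega_X)$ has rank $2r-1\neq r$.) So the route through ``regular with arbitrary block sizes'' cannot work.

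The fix --- and this is what the paper does --- is to extract more from hypothesis (ii). A reduced curve over $B$ is generically smooth, hence the degree-$r$ cover $C_V\to B$ is \'etale over a general closed point $b$; its fiber then consists of $r$ \emph{distinct} reduced points, which forces $V_b\simeq\lambda_1\oplus\cdots\oplus\lambda_r$ with pairwise distinct degree-zero line bundles $\lambda_i$ (all $r_j=1$). At that point your connecting-map argument does go through, but it is also unnecessary: one computes directly
\[
h^0\bigl(X_b,(E\otimes\Omega_X)_b\bigr)=\sum_{i,j} h^0\bigl(X_b,\lambda_i\lambda_j^{-1}\otimes\mathrm{I}_2\bigr)=\sum_{i,j}\delta_{ij}=r,
\]
and then the exact sequence $0\to\pi_\ast(E\otimes\pi^\ast\omega_B)\to\pi_\ast(E\otimes\Omega_X)\to\pi_\ast(E\otimes\Omega_\pi)$ is a sequence of rank-$r$ bundles on the curve $B$, whence the first arrow is an isomorphism (its cokernel is torsion and injects into a locally free sheaf).
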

 
This immediately implies the following: 
 
\begin{corollary}
\label{corollll}
Let $V$ be as in Proposition \ref{propo}. Then the only Higgs fields on $V$ are vertical.
\end{corollary}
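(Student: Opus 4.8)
The plan is to deduce the Corollary directly from Proposition \ref{propo} by passing to global sections. Recall that, by the discussion preceding Proposition \ref{propo}, the \emph{vertical} Higgs fields on $V$ are by definition the elements of $H^0(X,E\otimes\Omega_X)$ lying in the image of the $k$-linear map
$$
H^0(X,1\otimes\pi^\ast):\ H^0(X,E\otimes\pi^\ast\omega_B)\longrightarrow H^0(X,E\otimes\Omega_X)
$$
induced on global sections by the sheaf injection \eqref{lisomorphidue}. On the other hand, an arbitrary Higgs field on $V$ is (in particular) an arbitrary element of the target $H^0(X,E\otimes\Omega_X)$. Thus the assertion that every Higgs field is vertical amounts to the surjectivity of the displayed map.

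First I would use the fact that taking global sections factors through $\pi$: for any coherent sheaf $F$ on $X$ there is a natural identification $H^0(X,F)\simeq H^0(B,\pi_\ast F)$, functorial in $F$. Applying it to $F=E\otimes\pi^\ast\omega_B$ and to $F=E\otimes\Omega_X$, and to the morphism $1\otimes\pi^\ast$ between them, the map $H^0(X,1\otimes\pi^\ast)$ above is identified with the map
$$
H^0\bigl(B,\pi_\ast(1\otimes\pi^\ast)\bigr):\ H^0\bigl(B,\pi_\ast(E\otimes\pi^\ast\omega_B)\bigr)\longrightarrow H^0\bigl(B,\pi_\ast(E\otimes\Omega_X)\bigr)
$$
obtained by applying $H^0(B,-)$ to the sheaf morphism \eqref{lisomorphismo}.

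By Proposition \ref{propo}, the morphism \eqref{lisomorphismo} is an isomorphism of locally free sheaves on $B$; hence the induced map on $H^0(B,-)$ is an isomorphism of $k$-vector spaces. Under the identifications of the previous step this says precisely that $H^0(X,1\otimes\pi^\ast)$ is an isomorphism, so in particular it is surjective. Therefore every element of $H^0(X,E\otimes\Omega_X)$ — a fortiori, every Higgs field on $V$ — lies in the image of $H^0(X,1\otimes\pi^\ast)$, i.e. is vertical, which is the claim.

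Since there is no genuine difficulty once Proposition \ref{propo} is in hand, the only point requiring a word of care is the compatibility of the two maps on global sections under the adjunction $H^0(X,-)\simeq H^0(B,\pi_\ast -)$; this is routine functoriality of $\pi_\ast$. I would also remark that integrability plays no role in the argument: we show that the \emph{entire} space $H^0(X,E\otimes\Omega_X)$ consists of vertical sections, each of which automatically satisfies $\phi\wedge\phi=0$ as observed before Proposition \ref{propo}. In particular the cone $\mathcal{H}_V$ of Higgs fields coincides with this vector space, consistent with the earlier remark that $\mathcal{H}_V$ is a linear subspace in the cases under consideration.
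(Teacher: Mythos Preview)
Your proof is correct and follows essentially the same approach as the paper's: both arguments deduce the corollary from Proposition~\ref{propo} by passing to global sections via the identification $H^0(X,-)\simeq H^0(B,\pi_\ast-)$ and observing that the map on global sections induced by the isomorphism~\eqref{lisomorphismo} coincides with that induced by~\eqref{lisomorphidue}. Your version simply spells out the functoriality and the role of integrability in a bit more detail.
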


\begin{proof}
It follows immediately from Proposition \ref{propo} by taking the map on global sections induced by the isomorphism \eqref{lisomorphismo}, and noting that this is the same as the map on global sections induced by \eqref{lisomorphidue}.
\end{proof}

\begin{proof}[Proof of Proposition \ref{propo}]
Tensoring \eqref{exact sequence of kahler differntials} by the locally free sheaf $E$, we get the short exact sequence of torsion free sheaves on $X$
$$
E\otimes \mathcal{E}:\quad 0\to E\otimes\pi^\ast\omega_B\to E\otimes\Omega_X\to E\otimes\Omega_\pi\to 0;
$$
taking direct images, we then get the exact sequence of (locally free, by Lemma \ref{lemma utile}) sheaves on $B$
\begin{equation}
\label{la successione}
0\to\pi_\ast( E\otimes\pi^\ast\omega_B)\to\pi_\ast(E\otimes\Omega_X)\to\pi_\ast(E\otimes\Omega_\pi).
\end{equation}
{The} reducedness of the spectral cover implies that, on a general closed fiber $X_b$ of $\pi$, the bundle $V_b$ is isomorphic to a direct sum of $r$ pairwise distinct line bundles of degree zero on $X_b$, say
$$
V_b\simeq\lambda_1\oplus\cdots\oplus\lambda_r.
$$
This, together with the fact that $\pi^\ast\omega_B$ and $\Omega_\pi$ are trivial on a general closed fiber, implies immediately that $\pi_\ast( E\otimes\pi^\ast\omega_B)$ and $\pi_\ast(E\otimes\Omega_\pi)$ have rank $r$. Finally, using the fact that $\Omega_{X,b}$ is isomorphic to the rank $2$ Atiyah bundle on $X_b$ for $b\in B(k)$ general, we find
$$
(E\otimes\Omega_X)_b\simeq\mathit\bigoplus_{i,j=1}^r\lambda_i\otimes\lambda_j^{-1}\otimes\mathrm{I}_2;
$$
thus, 
$$
 h^0(X_b,(E\otimes\Omega_X)_b)=\sum_{i,j=1}^r h^0(X_b,\lambda_i\otimes\lambda_j^{-1}\otimes\mathrm{I}_2)=\sum_{i,j=1}^r\delta_{ij}=r,
$$
which shows that $\pi_\ast(E\otimes\Omega_X)$ has rank $r$ too. It follows that \eqref{la successione} is an exact sequence of rank $r$ bundles, and this implies immediately that the first map is an isomorphism, as claimed.
\end{proof}

\subsubsection{Remarks on the assumptions of Proposition \ref{propo}}We remark that the assumptions in \ref{propo} are both necessary. To show that assumption (i) is necessary, set $\ell:=\mathcal{O}_X(\Sigma)$, $V_1=\ell\oplus \ell^{-1}$. Then $V_1$ is a rank $2$ bundle with trivial determinant, but it is unstable on each fiber; moreover $
E_1:=\mathit{End}(V_1)$ satisfies $E_1\simeq \ell^{ 2}\oplus\mathcal{O}_X^{\oplus 2}\oplus \ell^{-2}$. Thus by Proposition \ref{immagini multipli sezione}
$$
\pi_\ast( E_1\otimes\pi^\ast\omega_B)\simeq \omega_B\otimes(\mathcal{O}_B^{\oplus 3}\oplus\mathbb{L}^{-2})
$$
has rank $4$; instead, $\pi_\ast(E_1\otimes\Omega_X)$ has rank $6$, since for $b\in B(k)$ general, we have, setting $p:=\sigma(b)\in X_b$,
$$
 h^0((E_1\otimes\Omega_X)_b)= h^0((\mathcal{O}(2p)\oplus\mathcal{O}^{\oplus 2}\oplus\mathcal{O}(-2p))\otimes\mathrm{I}_2)=4+2+0.
$$

To show that assumption (ii) is necessary, consider a rank $r\geqslant 2$ bundle $V_2$ on $X$ whose restriction to a general fiber is isomorphic to the rank $r$ Atiyah bundle $\mathrm{I}_r$ (such bundles exist; for  $r=2$, one could take, e.g., $V_2=\Omega_X$), so that $V_2$ is semistable (in fact, regular) and has trivial determinant on a general fiber, but it has non-reduced spectral cover $r\Sigma$. Let $E_2:=\mathit{End}(V_2)$. Then $\pi_\ast(E_2\otimes\pi^\ast\omega_B)$ has rank $r$, while $\pi_\ast(E_2\otimes\Omega_X)$ has rank $2r-1\neq  r$.

\subsection{Scalar Higgs fields}
\label{scalar Higgs}
Let $Y$ be a non-singular, complete scheme; let $V$ be a locally free sheaf on $Y$ of rank $r>0$, and $E$ its sheaf of endomorphisms. The linear map $\mathcal{O}_Y\to E$ corresponding to the identity section of $E$ is injective; thus, tensoring by $\Omega_Y$, and then taking global sections,  we get an injective $k$-linear map
$$
 H^0(\Omega_Y)\hookrightarrow H^0(E\otimes\Omega_Y),\qquad\alpha\mapsto\phi_\alpha,
$$
which factors through the inclusion $\mathcal{H}_V\hookrightarrow  H^0(E\otimes\Omega_Y)$. In fact, for each global $1$-form $\alpha$ on $Y$, the field $\phi_\alpha$ acts on a local section $s$ of $V$, defined on a Zariski open $\mathcal{U}$ of $Y$, as $\phi_\alpha(s)=s\otimes\alpha|_\mathcal{U}$; from this it follows immediately that $\phi_\alpha$ satisfies the integrability condition $\phi_\alpha\wedge\phi_\alpha=0$, and that, moreover, the matrix of $\phi_\alpha$ with respect to a local frame $(e_1,\dots,e_r):\mathcal{O}_\mathcal{U}^{\oplus r}\xrightarrow{\simeq} V|_\mathcal{U}$ for $V$
is the (scalar) matrix of $1$-forms on $\mathcal{U}$
$$
\mathrm{diag}(\alpha|_\mathcal{U},\dots,\alpha|_\mathcal{U});
$$
thus the Higgs fields on $V$ obtained  in this way might be called \emph{scalar} Higgs fields. This shows that $V$ admits a family of Higgs fields of dimension $ h^0(\Omega_Y)$. For example, for a Weierstrass fibration $\pi:X\to B$ (satisfying the usual assumptions), one has, using the case $r=1$ of Proposition \ref{le potenze},
$$
 h^0(X,\Omega_X)= h^0(B,\pi_\ast\Omega_X)= h^0(B,\omega_B)=g,
$$
where $g$ is the genus of the base curve $B$.

We remark that scalar Higgs fields pull-back to scalar Higgs fields; more precisely, for a morphism $f:Z\to Y$, and a vector bundle $V$ on $Y$, one has a commutative diagram
\begin{equation}\label{scalardiag}
\begin{tikzcd}
 H^0(\Omega_Y)\arrow[hook]{r}\arrow{d}&\mathcal{H}_V
\arrow{d}\\
 H^0(\Omega_Z)
\arrow[hook]{r}& 
\mathcal{H}_{f^\ast V}
\end{tikzcd}
\end{equation}
where the vertical arrows are the pull-back maps on global $1$-forms and on Higgs fields.

 Our next goal is to show that, under suitable assumptions, a vector bundle on a Weierstrass fibration supports only scalar Higgs fields.

\subsubsection{The Universal Spectral Cover}

We fix a Weierstrass fibration $(\pi:X\to B,\Sigma)$, and an integer $r\geqslant 2$. We recall, from Section \ref{uno}, that $\mathcal{S}=\pi_\ast\mathcal{O}_X(r\Sigma)$ is a rank $r$ vector bundle on $B$. The morphism $p:\mathbb{P}_B(\mathcal{S})\to B$ is then a $\mathbb{P}^{r-1}$-bundle on $B$, while $\tilde{p}:\mathbb{P}_X(\pi^\ast \mathcal{S})\to X$ is a $\mathbb{P}^{r-1}$-bundle on $X$; and there is a canonical morphism $\tilde{\pi}:\mathbb{P}_X(\pi^\ast \mathcal{S})\to\mathbb{P}_B(\mathcal{S})$ such that
$$
\tilde{\pi}^\ast\mathcal{O}_{\mathbb{P}_B(\mathcal{S})}(1)\simeq\mathcal{O}_{\mathbb{P}_X(\pi^\ast \mathcal{S})}(1),
$$
and the square
$$
\begin{tikzcd}
\mathbb{P}_X(\pi^\ast \mathcal{S})\arrow{r}{\tilde{\pi}}\arrow{d}{\tilde{p}}
               &\mathbb{P}_B(\mathcal{S})\arrow{d}{p}\\
X\arrow{r}{\pi}& B
\end{tikzcd}
$$
is cartesian. Now let $\mathcal{K}$ be the kernel of the canonical epimorphism $\pi^\ast \mathcal{S}\twoheadrightarrow\mathcal{O}_X(r\Sigma)$. Then $\mathcal{K}$ is a vector bundle on $X$ of rank $r-1$; 
thus $\mathcal{C}:=\mathbb{P}_X(\mathcal{K})$ is a $\mathbb{P}^{r-2}$-bundle on $X$, which is the relative incidence correspondence of the family of complete linear systems on the fibers of $\pi$ associated to the line bundle $\mathcal{O}_X(r\Sigma)$. It is a smooth, prime divisor of $\mathbb{P}_X(\pi^\ast \mathcal{S})$, whose associated invertible sheaf $\mathcal{O}_{\mathbb{P}_X(\pi^\ast \mathcal{S})}(\mathcal{C})$ is given by
\begin{eqnarray*}
\mathcal{O}_{\mathbb{P}_X(\pi^\ast \mathcal{S})}(\mathcal{C}) &\simeq &\mathcal{O}_{\mathbb{P}_X(\pi^\ast \mathcal{S})}(1)\otimes\tilde{p}^\ast\mathcal{O}_X(r\Sigma)\\
&\simeq &\tilde{\pi}^\ast\mathcal{O}_{\mathbb{P}_B(\mathcal{S})}(1)\otimes\tilde{p}^\ast\mathcal{O}_X(r\Sigma).
\end{eqnarray*}
Moreover, $\tilde{p}$ restricts to $\mathcal{C}$ to give the bundle projection $\mathbb{P}_X(\mathcal{K})\to X$, while the restriction of $\tilde{\pi}$ to $\mathcal{C}$ is a finite, degree $r$ morphism
$$
\varphi:\mathcal{C}\to\mathbb{P}_B(\mathcal{S}).
$$
The following commutative diagram summarizes the situation:
$$
\begin{tikzcd}
 & &X\arrow{rd}{\pi}&\\
\mathcal{C}\arrow[bend left]{rru}\arrow[hook]{r}\arrow[bend right]{rrd}{}[swap]{\varphi}&\mathbb{P}_X(\pi^\ast \mathcal{S})\arrow{ru}{\tilde{p}}\arrow{rd}{}[swap]{\tilde{\pi}}&&B\\
 & &\mathbb{P}_B(\mathcal{S})\arrow{ru}{}[swap]{p}& \\
\end{tikzcd}.
$$
The morphism $\varphi$ is called the \emph{universal spectral cover}, for the following reason: let $V$ be a rank $r$ bundle on $X$, with vertical determinant and semistable on a general fiber of $\pi$. We saw that the map $\det(\mathrm{ev}:\pi^\ast\pi_\ast V(\Sigma)\to V(\Sigma))$ can be considered as a section of $\mathcal{S}\otimes\mu$, for a suitable $\mu\in\mathrm{Pic}(B)$, or as a morphism $\mathcal{S}^\vee\to\mu$, surjective at the generic point of $B$. It thus gives rise to a rational section of $p:\mathbb{P}_B(\mathcal{S})\to B$, which then extends to a global section $A_V:B\to\mathbb{P}_B(\mathcal{S})$ (since $B$ is a smooth curve, and $\mathbb{P}_B(\mathcal{S})$ is complete). Then one can show that the spectral cover $\varphi_V:C_V\to B$ of $V$ is isomorphic (as a $B$-scheme) to the base change of the universal spectral cover via the section $A_V$; in other words, there exists a closed immersion $C_V\hookrightarrow\mathcal{C}$ such that the square
$$
\begin{tikzcd}
C_V\arrow[hook]{r}\arrow{d}{}[swap]{\varphi_V}
               &\mathcal{\mathcal{C}}\arrow{d}{\varphi}\\
B\arrow{r}{A_V}& \mathbb{P}_B(\mathcal{S})
\end{tikzcd}
$$
is cartesian.

We will use the following:

\begin{lemma}
\label{il lemma bla bla bla}
There is an exact sequence of vector bundles on $\mathbb{P}_B(\mathcal{S})$:
\begin{equation}
\label{successione esatta}
0\to\mathcal{O}_{\mathbb{P}_B(\mathcal{S})}\to\varphi_\ast\mathcal{O}_\mathcal{C}\to\mathcal{O}_{\mathbb{P}_B(\mathcal{S})}(-1)\otimes p^\ast( \mathcal{S}^\vee\otimes\mathbb{L}^{-1}).
\end{equation}
\end{lemma}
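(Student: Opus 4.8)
The plan is to resolve $\mathcal{O}_\mathcal{C}$ by the Koszul complex of the divisor $\mathcal{C}$ on $\mathbb{P}_X(\pi^\ast\mathcal{S})$ and to push it forward along $\tilde\pi$, exploiting that $\tilde\pi$ is the base change of $\pi$ along the flat morphism $p$. Concretely, since $\mathcal{C}$ is an effective Cartier divisor with $\mathcal{O}_{\mathbb{P}_X(\pi^\ast\mathcal{S})}(\mathcal{C})\simeq\tilde\pi^\ast\mathcal{O}_{\mathbb{P}_B(\mathcal{S})}(1)\otimes\tilde p^\ast\mathcal{O}_X(r\Sigma)$ (recorded in the discussion preceding the lemma), there is a short exact sequence on $\mathbb{P}_X(\pi^\ast\mathcal{S})$
\begin{equation*}
0\to\mathcal{N}\to\mathcal{O}_{\mathbb{P}_X(\pi^\ast\mathcal{S})}\to\iota_\ast\mathcal{O}_\mathcal{C}\to 0,\qquad\mathcal{N}:=\tilde\pi^\ast\mathcal{O}_{\mathbb{P}_B(\mathcal{S})}(-1)\otimes\tilde p^\ast\mathcal{O}_X(-r\Sigma),
\end{equation*}
with $\iota\colon\mathcal{C}\hookrightarrow\mathbb{P}_X(\pi^\ast\mathcal{S})$ the inclusion. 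Applying $R^\bullet\tilde\pi_\ast$, using that $\iota$ is a closed immersion (so $\tilde\pi_\ast\iota_\ast=\varphi_\ast$) and that $\tilde\pi_\ast\mathcal{O}_{\mathbb{P}_X(\pi^\ast\mathcal{S})}\simeq\mathcal{O}_{\mathbb{P}_B(\mathcal{S})}$ (flat base change along the cartesian square of the preamble, together with $\pi_\ast\mathcal{O}_X=\mathcal{O}_B$ from Proposition \ref{immagini multipli sezione}), I get an exact sequence
\begin{equation*}
0\to\tilde\pi_\ast\mathcal{N}\to\mathcal{O}_{\mathbb{P}_B(\mathcal{S})}\to\varphi_\ast\mathcal{O}_\mathcal{C}\to R^1\tilde\pi_\ast\mathcal{N}\to\cdots .
\end{equation*}

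It then remains to evaluate $\tilde\pi_\ast\mathcal{N}$ and $R^1\tilde\pi_\ast\mathcal{N}$. Because $\mathcal{O}_{\mathbb{P}_X(\pi^\ast\mathcal{S})}(-1)=\tilde\pi^\ast\mathcal{O}_{\mathbb{P}_B(\mathcal{S})}(-1)$, the projection formula gives $R^i\tilde\pi_\ast\mathcal{N}\simeq\mathcal{O}_{\mathbb{P}_B(\mathcal{S})}(-1)\otimes R^i\tilde\pi_\ast\tilde p^\ast\mathcal{O}_X(-r\Sigma)$, and flat base change (valid since $p$ is flat, being a projective-bundle projection) identifies $R^i\tilde\pi_\ast\tilde p^\ast\mathcal{O}_X(-r\Sigma)$ with $p^\ast R^i\pi_\ast\mathcal{O}_X(-r\Sigma)$. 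Now Proposition \ref{immagini multipli sezione} gives $\pi_\ast\mathcal{O}_X(-r\Sigma)=0$, hence $\tilde\pi_\ast\mathcal{N}=0$; and relative Serre duality \cite{kleiman}, with $\omega_\pi^\circ\simeq\pi^\ast\mathbb{L}$ and the projection formula, yields
\begin{equation*}
R^1\pi_\ast\mathcal{O}_X(-r\Sigma)\simeq\big(\pi_\ast(\mathcal{O}_X(r\Sigma)\otimes\pi^\ast\mathbb{L})\big)^\vee\simeq(\mathbb{L}\otimes\mathcal{S})^\vee\simeq\mathcal{S}^\vee\otimes\mathbb{L}^{-1}.
\end{equation*}
Substituting, the exact sequence above becomes the asserted
\begin{equation*}
0\to\mathcal{O}_{\mathbb{P}_B(\mathcal{S})}\to\varphi_\ast\mathcal{O}_\mathcal{C}\to\mathcal{O}_{\mathbb{P}_B(\mathcal{S})}(-1)\otimes p^\ast(\mathcal{S}^\vee\otimes\mathbb{L}^{-1});
\end{equation*}
and $\varphi_\ast\mathcal{O}_\mathcal{C}$ is locally free of rank $r$ because $\varphi$ is a finite morphism between smooth varieties of the same dimension, hence flat, so all three terms are vector bundles on $\mathbb{P}_B(\mathcal{S})$.

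I do not expect a genuine obstacle: the argument is essentially Koszul resolution plus base change. The only delicate points are keeping track of the twists in the relative-duality computation of $R^1\pi_\ast\mathcal{O}_X(-r\Sigma)$, and checking that flat base change really applies along the cartesian square (it does, since $p$ is flat). One could avoid relative duality by instead using the sequence $0\to\mathcal{O}_X(-r\Sigma)\to\mathcal{O}_X\to\mathcal{O}_{r\Sigma}\to 0$ and identifying $\pi_\ast\mathcal{O}_{r\Sigma}$, but the route above is shorter given the isomorphisms already assembled in Section \ref{uno}.
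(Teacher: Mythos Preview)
Your proof is correct and follows essentially the same route as the paper: both take the ideal-sheaf sequence of the divisor $\mathcal{C}$ on $\mathbb{P}_X(\pi^\ast\mathcal{S})$, push forward along $\tilde\pi$, and identify the higher direct images via the projection formula, base change along $p$, and relative Serre duality for $\pi$. Your write-up is slightly more explicit in justifying the base-change step and in explaining why $\varphi_\ast\mathcal{O}_\mathcal{C}$ is locally free (via miracle flatness for $\varphi$), but there is no substantive difference in strategy.
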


\begin{proof}
Set $\mathbb{P}:=\mathbb{P}_B(\mathcal{S})$, $\mathbb{P}^\ast:=\mathbb{P}_X(\pi^\ast \mathcal{S})$. We have the short exact sequence of sheaves on $\mathbb{P}^\ast$:
$$
0\to\mathcal{O}_{\mathbb{P}^\ast}(-\mathcal{C})\to\mathcal{O}_{\mathbb{P}^\ast}\to\mathcal{O}_\mathcal{C}\to 0.
$$
Taking direct $\tilde{\pi}$-images we get the exact sequence of sheaves on $\mathbb{P}$:
\begin{equation}
\label{successione del lemma}
0\to\tilde{\pi}_\ast\mathcal{O}_{\mathbb{P}^\ast}(-\mathcal{C})\to\tilde{\pi}_\ast\mathcal{O}_{\mathbb{P}^\ast}\to\tilde{\pi}_\ast\mathcal{O}_\mathcal{C}\to\mathrm{R}^1\tilde{\pi}_\ast\mathcal{O}_{\mathbb{P}^\ast}(-\mathcal{C}).
\end{equation}
Let us compute the sheaves in the last sequence. We have
\begin{eqnarray*}
\mathrm{R}^i\tilde{\pi}_\ast
\mathcal{O}_{\mathbb{P}^\ast}(-\mathcal{C})
&\simeq &
\mathrm{R}^i\tilde{\pi}_\ast
(
\tilde{\pi}^\ast\mathcal{O}_{\mathbb{P}}(-1)\otimes\tilde{p}^\ast\mathcal{O}_X(-r\Sigma)
)\\
&\simeq &
\mathcal{O}_{\mathbb{P}}(-1)\otimes\mathrm{R}^i\tilde{\pi}_\ast
(
\tilde{p}^\ast\mathcal{O}_X(-r\Sigma)
)\\
&\simeq &
\mathcal{O}_{\mathbb{P}}(-1)\otimes
p^\ast\mathrm{R}^i\pi_\ast
\mathcal{O}_X(-r\Sigma),
\end{eqnarray*}
where the sheaf $\mathrm{R}^i\pi_\ast
\mathcal{O}_X(-r\Sigma)$ is zero for $i=0$, while for $i=1$ it is isomorphic to
\begin{eqnarray*}
\mathit{Ext}^0_\pi(\mathcal{O}_X(-r\Sigma),\omega_\pi^\circ)^\vee &\simeq &
(
\pi_\ast(\mathcal{O}_X(r\Sigma)\otimes\pi^\ast\mathbb{L})
)^\vee\\
&\simeq & \mathcal{S}^\vee\otimes\mathbb{L}^{-1};
\end{eqnarray*}
it follows that
$$
\tilde{\pi}_\ast\mathcal{O}_{\mathbb{P}^\ast}(-\mathcal{C})=0,\qquad \mathrm{R}^1\tilde{\pi}_\ast\mathcal{O}_{\mathbb{P}^\ast}(-\mathcal{C})\simeq 
\mathcal{O}_{\mathbb{P}}(-1)\otimes
p^\ast( \mathcal{S}^\vee\otimes\mathbb{L}^{-1}).
$$
Finally, we have
$$
\tilde{\pi}_\ast\mathcal{O}_{\mathbb{P}^\ast}\simeq
\tilde{\pi}_\ast\tilde{p}^\ast\mathcal{O}_X\simeq p^\ast\pi_\ast\mathcal{O}_X\simeq p^\ast\mathcal{O}_B\simeq\mathcal{O}_{\mathbb{P}},
$$
and, denoting by $\iota:\mathcal{C}\hookrightarrow\mathbb{P}^\ast$ the inclusion,
$$
\tilde{\pi}_\ast\mathcal{O}_\mathcal{C}=\tilde{\pi}_\ast\iota_\ast\mathcal{O}_\mathcal{C}=(\tilde{\pi}\circ\iota)_\ast\mathcal{O}_\mathcal{C}=\varphi_\ast\mathcal{O}_\mathcal{C}.
$$
Substituting in \eqref{successione del lemma}, we get the sequence \eqref{successione esatta}.
\end{proof}

\subsubsection{}

Using lemma \ref{il lemma bla bla bla}, we can prove the following proposition, where, as usual, we denote by $(\pi:X\to B,\Sigma)$ a non-isotrivial Weierstrass fibration (with smooth base and total space) with nodal singular fibers; moreover, $d$ is the degree of the fundamental line bundle $\mathbb{L}$ of $(\pi,\Sigma)$, and $g$ the genus of the (complete, irreducible) curve $B$. 

\begin{Prop}
\label{uno dei risultati}
Let $V$ be a vector bundle on $X$ of rank $r\geqslant 2$, which is fiberwise \emph{regular} and has vertical determinant. Let $\varphi_V:C_V\to B$ be the spectral cover of $V$, and let us assume that at least one of the following two assumptions is satisfied:
\begin{enumerate}
\item[(i)] $C_V$ is integral and $d\geqslant 2g-1$;
\item[(ii)] $C_V$ is reduced and $\mathrm{c}_2(V)\geqslant (r-1)d+2g-1$.
\end{enumerate}
Then   {every} Higgs field on $V$ {is} scalar.
\end{Prop}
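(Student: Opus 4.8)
The plan is to show that the entire space $H^0(X,E\otimes\Omega_X)$ of $\Omega_X$-valued endomorphisms of $V$ — where $E:=\mathit{End}(V)$ — coincides with the (sub)space of scalar Higgs fields on $V$; since every Higgs field lies in this space, that suffices, and the integrability condition never needs to be invoked. First I would pass to the vertical part: in both cases $C_V$ is reduced (integral in (i)), and $V$ is fiberwise semistable because it is fiberwise regular, so Proposition \ref{propo} applies and the map $\pi_\ast(E\otimes\pi^\ast\omega_B)\to\pi_\ast(E\otimes\Omega_X)$ is an isomorphism. Combined with the projection formula, and with the fact that every global $1$-form on $X$ is pulled back from $B$ (true because $\pi$ is non-isotrivial, so $\pi_\ast\Omega_\pi\simeq\mathbb{L}^{-11}$ has no global sections by Lemma \ref{immagine diretta del fascio di ideali dello schema singolare}), this reduces the statement to the assertion that the inclusion $H^0(B,\omega_B)\hookrightarrow H^0(B,\pi_\ast E\otimes\omega_B)$ induced by the algebra unit $\mathcal{O}_B\hookrightarrow\pi_\ast E=\pi_\ast\mathit{End}(V)$ is surjective; for under all these identifications the image of that inclusion is exactly the cone of scalar Higgs fields.

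Next I would bring in the spectral cover. Since $V$ is fiberwise regular with vertical determinant, Proposition \ref{sui fib reg} gives an isomorphism of $\mathcal{O}_B$-algebras $\pi_\ast E\simeq\varphi_{V\ast}\mathcal{O}_{C_V}=\pi_\ast\mathcal{O}_{C_V}$ carrying the unit to the structural morphism $\mathcal{O}_B\to\pi_\ast\mathcal{O}_{C_V}$. To compute $\pi_\ast\mathcal{O}_{C_V}$ I would push forward along $\pi$ the ideal sheaf sequence of the divisor $C_V\in|r\Sigma+\pi^\ast\mu|$ on $X$ (equivalently, restrict the universal sequence of Lemma \ref{il lemma bla bla bla} along the classifying section $A_V\colon B\to\mathbb{P}_B(\mathcal{S}_r)$), using $\pi_\ast\mathcal{O}_X(-r\Sigma)=0$, relative Serre duality to get $\mathrm{R}^1\pi_\ast\mathcal{O}_X(-r\Sigma)\simeq\mathbb{L}^{-1}\otimes\mathcal{S}_r^\vee$ with $\mathcal{S}_r=\pi_\ast\mathcal{O}_X(r\Sigma)\simeq\mathcal{O}_B\oplus\mathbb{L}^{-2}\oplus\cdots\oplus\mathbb{L}^{-r}$, and $\mathrm{R}^1\pi_\ast\mathcal{O}_{C_V}=0$ (the fibres of $C_V\to B$ are finite). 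This yields an exact sequence
$$
0\to\mathcal{O}_B\to\pi_\ast\mathcal{O}_{C_V}\to\mu^{-1}\otimes\mathbb{L}^{-1}\otimes\mathcal{S}_r^\vee\to\mathbb{L}^{-1}\to 0
$$
whose first arrow is the structural morphism. Twisting by $\omega_B$ and taking cohomology, the surjectivity I want follows once $H^0(B,\mu^{-1}\otimes\mathbb{L}^{-1}\otimes\mathcal{S}_r^\vee\otimes\omega_B)=0$, since the kernel of the third arrow is a subsheaf of $\mu^{-1}\otimes\mathbb{L}^{-1}\otimes\mathcal{S}_r^\vee$.

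The last step is a degree count. Here $\mathbb{L}^{-1}\otimes\mathcal{S}_r^\vee\simeq\mathbb{L}^{-1}\oplus\mathbb{L}\oplus\mathbb{L}^2\oplus\cdots\oplus\mathbb{L}^{r-1}$ and $\deg\mu=e=\mathrm{c}_2(V)$ by Lemma \ref{e uguale c2}, so (as $d\geqslant 0$) the summand of largest degree in $\mu^{-1}\otimes\mathbb{L}^{-1}\otimes\mathcal{S}_r^\vee\otimes\omega_B$ is $\mu^{-1}\otimes\mathbb{L}^{r-1}\otimes\omega_B$, of degree $(r-1)d-\mathrm{c}_2(V)+2g-2$; hence all summands have negative degree, and the cohomology vanishes, precisely when $\mathrm{c}_2(V)\geqslant(r-1)d+2g-1$, which is hypothesis (ii). In case (i), integrality of $C_V$ yields the stronger bound $\mathrm{c}_2(V)\geqslant rd$ via Lemma \ref{le prop di spec cover}(i), and the vanishing then reduces to $rd>(r-1)d+2g-2$, i.e. to $d\geqslant 2g-1$, which is hypothesis (i).

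I expect the main difficulty to be organizational rather than a single hard estimate: one must thread together several canonical identifications — Proposition \ref{propo}/Corollary \ref{corollll} for verticality, the projection formula, ``all $1$-forms on $X$ come from $B$'', and the algebra isomorphism of Proposition \ref{sui fib reg} — so as to recognize the intrinsic notion of a \emph{scalar} Higgs field as the image of the unit $\mathcal{O}_B\hookrightarrow\pi_\ast E\simeq\pi_\ast\mathcal{O}_{C_V}$, and then pin down $\pi_\ast\mathcal{O}_{C_V}$ precisely enough (getting the $\mathbb{L}$-grading of $\mathcal{S}_r$ and the identity $\deg\mu=\mathrm{c}_2(V)$ exactly right) for the numerics to land on the stated inequalities on the nose. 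A small point worth flagging is that in case (i) integrality of $C_V$ enters only through the auxiliary bound $\mathrm{c}_2(V)\geqslant rd$, whereas its reducedness is what licenses the use of Corollary \ref{corollll}.
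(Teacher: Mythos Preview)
Your proposal is correct and follows essentially the same route as the paper: reduce to vertical Higgs fields via Proposition~\ref{propo}, identify $\pi_\ast\mathit{End}(V)\simeq\varphi_{V\ast}\mathcal{O}_{C_V}$ via Proposition~\ref{sui fib reg}, obtain the exact sequence $0\to\mathcal{O}_B\to\varphi_{V\ast}\mathcal{O}_{C_V}\to\mu^{-1}\otimes\mathbb{L}^{-1}\otimes\mathcal{S}_r^\vee$ (the paper derives it by pulling back Lemma~\ref{il lemma bla bla bla} along $A_V$, which you mention as an alternative to the direct push-forward), twist by $\omega_B$, and conclude by the identical degree count using Lemma~\ref{e uguale c2} and Lemma~\ref{le prop di spec cover}. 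The only cosmetic difference is that the paper finishes with a dimension count ($h^0(\Omega_X)=g=\dim\mathcal{H}_V$) rather than tracking that the first arrow is the algebra unit, but the content is the same.
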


\begin{proof}
Let us start by remarking that, under either of the assumptions (i) or (ii), we can apply to $V$ Proposition \ref{propo} and its Corollary \ref{corollll}. Thus, any map $V\to V\otimes\Omega_X$ is a Higgs field, and these are the same as the global sections of the vector bundle on $B$
$$
\pi_\ast(\pi^\ast\omega_B\otimes\mathit{End}V)\simeq\omega_B\otimes\pi_\ast\mathit{End}\,V.
$$
Moreover, since $V$ is fiberwise regular, the bundle $\pi_\ast\mathit{End}\,V$ is isomorphic to $\varphi_{V\ast}\mathcal{O}_{C_V}$ by Proposition \ref{sui fib reg}. 

Set $\mathcal{S}:=\pi_\ast\mathcal{O}_X(r\Sigma)\simeq\bigoplus_{i\in\{2,\dots,r,0\}}\mathbb{L}^{-i}$, and denote by $p:\mathbb{P}\to B$ the projectivization of $\mathcal{S}$, and by $A_V:B\to\mathbb{P}$ the section of $p$ corresponding to $V$; recall that we have a cartesian square with finite vertical arrows 
$$
\begin{tikzcd}
C\arrow[hook]{r}\arrow{d}{}[swap]{\varphi_V}
               &\mathcal{C}\arrow{d}{\varphi}[swap]{}\\
B\arrow[hook]{r}{A_V}& \mathbb{P}
\end{tikzcd},
$$
where $\varphi:\mathcal{C}\to\mathbb{P}$ is the universal spectral cover. Thus, for each sheaf $F$ on $\mathcal{C}$, the natural map
$$
A_V^\ast\varphi_\ast F\to\varphi_{V\ast}(F\otimes\mathcal{O}_C)
$$
is an isomorphism (see, e.g., Lemma $5.6$ of \cite{mfw}). In particular, taking $F=\mathcal{O}_\mathcal{C}$, we get an isomorphism
$$
A_V^\ast\varphi_\ast\mathcal{O}_\mathcal{C}\simeq \varphi_{V\ast}\mathcal{O}_{C_V}.
$$
It follows that if we pull the exact sequence \eqref{successione esatta} back to $B$ using $A_V:B\to\mathbb{P}$, we get an exact sequence of vector bundles on $B$
$$
0\to\mathcal{O}_B\to\varphi_{V\ast}\mathcal{O}_{C_V}\to\mu^{-1}\otimes \mathcal{S}^\vee\otimes\mathbb{L}^{-1},
$$
where $\mu=A_V^\ast\mathcal{O}_\mathbb{P}(1)$ is the line bundle \eqref{def of mu}; tensoring with $\omega_B$ we obtain an exact sequence
\begin{equation}
\label{successione intermedia}
0\to\omega_B\to\omega_B\otimes\varphi_{V\ast}\mathcal{O}_{C_V}\to \mathcal{S}^\vee\otimes\mathbb{L}^{-1}\otimes\mu^{-1}\otimes\omega_B.
\end{equation}
The last bundle splits as $\lambda_2\oplus\cdots\oplus\lambda_r\oplus\lambda_0$, where
$$
\lambda_i:=\mathbb{L}^{i-1}\otimes\mu^{-1}\otimes\omega_B\in\mathrm{Pic}(B).
$$
Now recall that $d\geqslant 1$, and that $e:=\deg(\mu)$ satisfies $e=\mathrm{c}_2(V)$ by Lemma \ref{e uguale c2},  and the inequalities
$$
e\geqslant
\left\{
\begin{array}{ccc}
rd & , &\mbox{if }C_V\mbox{ is integral}\\
(r-1)d & , &\mbox{if }C_V\mbox{ is reduced}
\end{array}
\right.
$$
by Lemma \ref{le prop di spec cover}. It follows that, for each $i\in\{0,2,\dots,r\}$, the degree
$$
\deg(\lambda_i)=(i-1)d-e+2g-2
$$
satisfies
$$
\deg(\lambda_i)\leqslant (r-1)d-e+2g-2,
$$
and the right hand side of this inequality is negative under any of the assumptions (i) and (ii). Thus
$$
 H^0(\mathcal{S}^\vee\otimes\mathbb{L}^{-1}\otimes\mu^{-1}\otimes\omega_B)=\bigoplus_i H^0(\lambda_i)=0,
$$
and the arrow $\omega_B\to\omega_B\otimes\varphi_{V\ast}\mathcal{O}_{C_V}$ in \eqref{successione intermedia} induces an isomorphism on global sections. It follows that
the natural map $ H^0(\Omega_X)\hookrightarrow\mathcal{H}_V=\mathrm{Hom}(V,V\otimes\Omega_X)$ is an injection between vector spaces of the same dimension $g$, and thus an isomorphism.
\end{proof}

Proposition \ref{uno dei risultati}  is useful in conjunction with the following sufficient condition for the base-point freeness of linear systems on $X$ of the form $|r\Sigma+\pi^\ast\mu|$ ($r\in\mathbb{Z}$, $\mu\in\mathrm{Pic}(B)$):

\begin{Prop}
\label{base point free}
Let $r$ be an integer $\geqslant 2$, and let $\mu$ be a line bundle on $B$ of degree $e\geqslant rd+2g$. Then the linear system on $X$
$$
|r\Sigma+\pi^\ast\mu|
$$
is base-point free.
\end{Prop}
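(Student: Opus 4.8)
The plan is to prove the (a priori stronger) statement that the line bundle $\mathcal{L}:=\mathcal{O}_X(r\Sigma)\otimes\pi^\ast\mu$ is globally generated, splitting the task along the fibration into two parts: (a) the \emph{relative} global generation of $\mathcal{O}_X(r\Sigma)$, i.e.\ the surjectivity of the evaluation map $\mathrm{ev}\colon\pi^\ast\pi_\ast\mathcal{O}_X(r\Sigma)\to\mathcal{O}_X(r\Sigma)$; and (b) the global generation on $B$ of the rank-$r$ bundle $\pi_\ast\mathcal{L}$. Granting these, the projection formula gives $\pi_\ast\mathcal{L}\simeq\pi_\ast\mathcal{O}_X(r\Sigma)\otimes\mu$ and $\pi^\ast\pi_\ast\mathcal{L}\simeq\pi^\ast\pi_\ast\mathcal{O}_X(r\Sigma)\otimes\pi^\ast\mu$, compatibly with the counit maps; so tensoring the surjection of (a) by $\pi^\ast\mu$, precomposing with the $\pi$-pull-back of the surjection $H^0(B,\pi_\ast\mathcal{L})\otimes\mathcal{O}_B\twoheadrightarrow\pi_\ast\mathcal{L}$ of (b), and using the identification $H^0(B,\pi_\ast\mathcal{L})=H^0(X,\mathcal{L})$, one obtains a surjection $H^0(X,\mathcal{L})\otimes\mathcal{O}_X\twoheadrightarrow\mathcal{L}$, which is exactly base-point freeness of $|r\Sigma+\pi^\ast\mu|$.

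Part (b) is immediate from the description of $\pi_\ast\mathcal{O}_X(r\Sigma)$ in Proposition~\ref{immagini multipli sezione}: by the projection formula $\pi_\ast\mathcal{L}\simeq\bigoplus_{i\in\{2,\dots,r,0\}}\mu\otimes\mathbb{L}^{-i}$, and each summand has degree $e-id\geqslant e-rd\geqslant 2g$ (using $0\leqslant i\leqslant r$, $d\geqslant 0$, and the hypothesis $e\geqslant rd+2g$). A line bundle of degree at least $2g$ on a smooth projective curve of genus $g$ is globally generated, so $\pi_\ast\mathcal{L}$, a direct sum of such, is globally generated on $B$.

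Part (a) is the heart of the proof. Since $h^0(X_b,\mathcal{O}_{X_b}(r\sigma(b)))=r$ and $h^1(X_b,\mathcal{O}_{X_b}(r\sigma(b)))=0$ for every closed point $b\in B$, cohomology and base change show that $\pi_\ast\mathcal{O}_X(r\Sigma)$ is locally free of rank $r$ and its formation commutes with base change; hence the restriction of $\mathrm{ev}$ to a fiber $X_b$ is the evaluation map of the complete linear system $|\mathcal{O}_{X_b}(r\sigma(b))|$ on the integral, Gorenstein curve $X_b$ of arithmetic genus $1$. It therefore suffices to show that a line bundle $L$ of degree $r\geqslant 2$ on such a curve $C$ is base-point free. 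For this I would use $\omega_C\simeq\mathcal{O}_C$ (the relative dualizing sheaf $\omega_\pi^\circ\simeq\pi^\ast\mathbb{L}$ restricts trivially to $X_b$): for any point $x\in C$, Serre duality gives $h^1(C,L\otimes\mathcal{I}_x)=h^0(C,(L\otimes\mathcal{I}_x)^\vee)$, and $(L\otimes\mathcal{I}_x)^\vee$ is torsion-free of degree $-(r-1)<0$, hence has no nonzero section; the sequence $0\to L\otimes\mathcal{I}_x\to L\to L|_x\to 0$ then yields the surjectivity of $H^0(C,L)\to L|_x$, i.e.\ $x$ is not a base point. Thus $\mathrm{ev}$ is surjective on every fiber; since $\mathrm{coker}(\mathrm{ev})$ is a quotient of the line bundle $\mathcal{O}_X(r\Sigma)$, it has the form $\mathcal{O}_X(r\Sigma)|_{Z'}$ for a closed subscheme $Z'\subseteq X$ with $Z'\cap X_b=\varnothing$ for every $b$; as the fibers cover $X$, this forces $Z'=\varnothing$, so $\mathrm{ev}$ is surjective.

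The point I expect to require most care is the base-point freeness of $|\mathcal{O}_{X_b}(r\sigma(b))|$ at the \emph{nodes} of the singular fibers, where $\mathcal{I}_x$ fails to be invertible and one cannot simply ``twist down''; this is exactly what forces the appeal to Gorensteinness and Serre duality above, and it is the borderline case $r=2$ that makes such an argument necessary (for $r\geqslant 3$ one could instead invoke the relative very ampleness of $3\Sigma$ from Proposition~\ref{sull w fib}). Note finally that the hypothesis $r\geqslant 2$ cannot be dropped, since $|\mathcal{O}_{X_b}(\sigma(b))|$ always has $\sigma(b)$ as a base point.
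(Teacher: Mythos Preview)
Your argument is correct and follows the same two-step strategy as the paper's proof: the complete linear series $|r\sigma(b)|$ is base-point free on each fibre, and global sections of $\mathcal{L}$ surject onto the sections on each fibre. Your packaging of the second step is slightly different---you phrase it as global generation on $B$ of $\pi_\ast\mathcal{L}\simeq\bigoplus_i\mu\otimes\mathbb{L}^{-i}$ (each summand having degree $\geqslant 2g$), whereas the paper shows $H^1(X,\mathcal{L}(-X_b))=0$ via the Leray spectral sequence and Proposition~\ref{immagini multipli sezione}; these are equivalent computations. One point where you go beyond the paper: the paper simply asserts that $|r\sigma(b)|$ is base-point free for $r\geqslant 2$, while you supply the Serre-duality argument (using that $X_b$ is Gorenstein with trivial dualizing sheaf, and that a torsion-free rank-one sheaf of negative degree on an integral curve has no sections) making this precise even at the nodes.
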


\begin{proof}
For each $b\in B(k)$, the line bundle $\lambda:=\mathcal{O}_X(r\Sigma)\otimes\pi^\ast\mu$ on $X$ restricts on $X_b$ to $\mathcal{O}_{X_b}(r\sigma(b))\in\mathrm{Pic}(X_b)$, and the linear system $|r\sigma(b)|$ on $X_b$ is base-point free, since $r$ is assumed to be $\geqslant 2$. It is then enough to check that the restriction map
$$
\mathrm{res}: H^0(X,r\Sigma+\pi^\ast\mu)\to H^0(X_b,r\sigma(b))
$$
is surjective. Its cokernel injects into $
 H^1(X,\lambda(-X_b))= H^1(X,r\Sigma+\pi^\ast(\mu-b))$;
by the Leray spectral sequence we have
$$
 h^1(X,\lambda(-X_b))= h^1(B,\mathrm{R}^0\pi_\ast(\lambda(-X_b)))+ h^0(B,\mathrm{R}^1\pi_\ast(\lambda(-X_b))),
$$
with $
\mathrm{R}^i\pi_\ast(\lambda(-X_b))=\mathrm{R}^i\pi_\ast\mathcal{O}_X(r\Sigma)\otimes\mu(-b)$; the last sheaf is zero for $i=1$, while for $i=0$ it is isomorphic to $\bigoplus_{j\in\{2,\dots,r,0\}}\mathbb{L}^{-j}\mu(-b)$  by Proposition \ref{immagini multipli sezione}(ii) (here and in the following we omit some $\otimes$'s for simplicity of notation). Thus we find
$$
 h^1(X,\lambda(-X_b))=\sum_j h^1(\mathbb{L}^{-j}\mu(-b))=\sum_j h^0(\omega_B\mathbb{L}^j\mu^{-1}\otimes\mathcal{O}_B(b)),
$$
and each of the summands in the last sum is zero, since
$$
\deg(\omega_B\mathbb{L}^j\mu^{-1}\otimes\mathcal{O}_B(b))=2g-2+jd-e+1\leqslant rd+2g-e-1\leqslant -1<0.
$$
This shows that $\mathrm{res}$ is surjective, as claimed.
\end{proof}

\medskip
\section{Sheaves with vanishing discriminant}
\label{ultima sezione}
\subsection{Stability}

Let us recall that the notions of degree, slope, and slope-semistability for a sheaf on a projective scheme $Y$ require the choice of an ample divisor $H$ on $Y$ (or of an open ray $\mathbb{R}_{>0}\cdot H$ in the ample cone of $Y$), which in this context is called a \emph{polarization} on $Y$. A \emph{polarized variety} is then a pair $(Y,H)$, where $Y$ is a non-singular, irreducible, projective scheme, and $H$ a polarization on $Y$.

Let $(Y,H)$ be a polarized variety, with $n:=\dim(Y)\geqslant 1$. The \emph{degree} of a coherent sheaf $F$ on $Y$ is the integer
 $$
 \deg(F):=\mathrm{c}_1(F)\cdot  H^{n-1};
 $$
 if $F$ has positive rank, the \emph{slope} of $F$ is the rational integer
 $$
 \mu(F):=\frac{\deg(F)}{\mathrm{rk}(F)}.
 $$
 A torsion-free sheaf $F$ on $Y$ is said to be \emph{slope-semistable} if, for each non-zero subsheaf $S$ of $F$, one has
 \begin{equation}
 \label{stab cond}
 \mu(S)\leqslant\mu(F)
 \end{equation}
 (or, equivalently, if $\mu(F)\leqslant\mu(Q)$ for each quotient $Q$ of $F$ with $\mathrm{rk}(Q)\neq 0$). Since this is the only notion of stability that we shall consider, we will sometimes shorten slope-semistable to \emph{semistable}.

\subsection{The discriminant and Bogomolov inequality} For the reader's convenience this and the next subsection will repeat some definitions and statements already given in the Introduction.
Let $Y$ be an irreducible, non-singular, projective scheme, and let $F$ be a rank $r$ sheaf on $Y$. The \emph{discriminant} of $F$ is the characteristic class
$$
\Delta(F):=2r\mathrm{c}_2(F)-(r-1)\mathrm{c}_1(F)^2\in\mathrm{A}^2(Y).
$$
(here $\mathrm{A}^2(Y)$ is the Chow group of codimension $2$ cycles on $Y$ modulo rational equivalence) If $F$ is locally free, this is the same as $\mathrm{c}_2(\mathit{End}(F))=\mathrm{c}_2(F^\vee\otimes F)$, whose expansion in terms of the Chern roots of $F$ might help explaining the choice of the name discriminant. If $Y$ has dimension $n\geqslant 2$ and is polarized by the ample divisor $H$, one can multiply the discriminant of $F$ by a suitable power of $H$, to obtain an integer, called the \emph{Bogomolov number} of $F$ (with respect to $H$), and denoted by $\mathrm{B}(F)$:
$$
\mathrm{B}(F):=\Delta(F)\cdot  H^{n-2}.
$$
Of course {when $Y$ is a surface  $\mathrm{B}(F)$} is  independent {of} the polarization, and it is just the image of $\Delta(F)$ in $\mathbb{Z}$ under the degree homomorphism $\mathrm{A}^2(Y)=\mathrm{A}_0(Y)\to \mathbb{Z}$.

The Bogomolov number is the subject of a remarkable theorem, the \emph{Bogomolov inequality}, proved for the first time in \cite{bog} for a vector bundle on a complex surface (see \cite{langer} for a proof of the general statement):

\begin{thm}[Bogomolov inequality]
Let $(Y,H)$ be a polarized variety, and let $F$ be a torsion-free coherent sheaf on $Y$. Then, if $F$ is slope-semistable, the Bogomolov number of $F$ is non-negative.
\end{thm}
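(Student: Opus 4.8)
\emph{Strategy.} I would prove the theorem in three moves: (1) reduce, by a restriction theorem, to the case $\dim Y=2$; (2) reduce further to the case of a \emph{locally free} sheaf on a surface; (3) settle the locally free surface case by a tensor-power argument, comparing Hirzebruch--Riemann--Roch against an a~priori bound for the number of sections of a semistable sheaf.

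\emph{Reductions.} Assume $n=\dim Y\geqslant 3$. Fix $m\gg 0$ and let $S\subset Y$ be the complete intersection of $n-2$ general members of $|mH|$; then $S$ is a smooth irreducible surface, $F|_S$ is torsion-free, and by the Mehta--Ramanathan restriction theorem (valid over any field of characteristic $0$; see \cite{langer}) $F|_S$ is slope-semistable with respect to $H|_S$ once $m$ is large enough. Since $\mathrm{c}_i(F|_S)=\mathrm{c}_i(F)\cdot[S]$, the degree of the $0$-cycle $\Delta(F|_S)$ on $S$ equals $\Delta(F)\cdot[S]=m^{n-2}\,\Delta(F)\cdot H^{n-2}=m^{n-2}\,\mathrm{B}(F)$, so it suffices to prove $\deg\Delta(F)\geqslant 0$ for $F$ torsion-free slope-semistable on a polarized surface $(S,H)$. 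On a surface one has $0\to F\to F^{\vee\vee}\to T\to 0$ with $F^{\vee\vee}$ locally free (reflexive sheaves on a regular surface are locally free) and $T$ supported in dimension $0$; then $F^{\vee\vee}$ is still slope-semistable, with the same rank and $\mathrm{c}_1$ as $F$ and $\mathrm{c}_2(F^{\vee\vee})\leqslant\mathrm{c}_2(F)$, so that $\deg\Delta(F)\geqslant\deg\Delta(F^{\vee\vee})$. Thus we may assume $F$ locally free.

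\emph{The locally free surface case.} Set $E:=\mathit{End}(F)=F^\vee\otimes F$, a locally free sheaf with $\mathrm{rk}(E)=r^2$, $\mathrm{c}_1(E)=0$, and (by a Chern-root computation) $\mathrm{ch}_2(E)=(r-1)\mathrm{c}_1(F)^2-2r\,\mathrm{c}_2(F)=-\Delta(F)$. Since $F$ and $F^\vee$ are slope-semistable with opposite slopes, and in characteristic $0$ tensor products of slope-semistable torsion-free sheaves are slope-semistable, for every $k\geqslant 1$ the sheaf $E^{\otimes k}$ is slope-semistable with $\mu(E^{\otimes k})=0$, $\mathrm{rk}(E^{\otimes k})=r^{2k}$, $\mathrm{c}_1(E^{\otimes k})=0$ and $\mathrm{ch}_2(E^{\otimes k})=k\,r^{2(k-1)}\,\mathrm{ch}_2(E)$. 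Hirzebruch--Riemann--Roch on $S$ then gives
$$\chi(S,E^{\otimes k})=\deg\mathrm{ch}_2(E^{\otimes k})+r^{2k}\chi(\mathcal{O}_S)=-k\,r^{2(k-1)}\deg\Delta(F)+r^{2k}\chi(\mathcal{O}_S).$$
On the other hand $\chi(S,E^{\otimes k})\leqslant h^0(S,E^{\otimes k})+h^2(S,E^{\otimes k})$, and by Serre duality $h^2(S,E^{\otimes k})=h^0\big(S,(E^\vee)^{\otimes k}\otimes\omega_S\big)$, where $(E^\vee)^{\otimes k}\otimes\omega_S$ is slope-semistable of slope $\deg_H\omega_S$. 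I would now invoke the standard estimate that \emph{on a fixed polarized surface $(S,H)$, for a fixed bound $\mu_0$ on the slope there is a constant $c$ with $h^0(S,G)\leqslant c\,\mathrm{rk}(G)$ for every torsion-free slope-semistable $G$ with $\mu_H(G)\leqslant\mu_0$}. Applied to $E^{\otimes k}$ (slope $0$) and to $(E^\vee)^{\otimes k}\otimes\omega_S$ (slope $\deg_H\omega_S$), this yields $\chi(S,E^{\otimes k})\leqslant c\,r^{2k}$ with $c$ independent of $k$. Combining, $-k\,r^{2(k-1)}\deg\Delta(F)\leqslant(c-\chi(\mathcal{O}_S))\,r^{2k}$, hence $-\deg\Delta(F)\leqslant(c-\chi(\mathcal{O}_S))\,r^{2}/k$; letting $k\to\infty$ forces $\deg\Delta(F)\geqslant 0$, i.e. $\mathrm{B}(F)\geqslant 0$ on surfaces.

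\emph{The auxiliary estimate, and the main obstacle.} The estimate used above is itself proved by restriction to curves: for a general smooth $C\in|m'H|$ with $m'$ fixed and large, $G|_C$ is semistable of slope $\mu_H(G)$, so $h^0(C,G|_C)$ is bounded by a linear function of $\mathrm{rk}(G)$ with coefficients depending on the slope; feeding this into the cohomology sequences of $0\to G(-(j+1)C)\to G(-jC)\to G(-jC)|_C\to 0$ and summing over $j$ (the terms vanish once the slope becomes negative) gives the bound, with the constant quadratic in $\mu_0$. I expect the genuinely hard inputs — and hence the main obstacle — to be precisely the restriction theorems (Mehta--Ramanathan/Flenner), used both to cut $Y$ down to a surface and to control $h^0$ on the surface, together with the fact that semistability is preserved by tensor products in characteristic $0$; everything else is formal. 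Alternatively, over $k=\mathbb{C}$ one may bypass the last step via the Kobayashi--Hitchin correspondence: a slope-stable bundle carries a Hermitian--Einstein metric, and Chern--Weil theory applied to its curvature gives $\mathrm{B}(F)\geqslant 0$ directly (the semistable case following from approximate Hermitian--Einstein structures), and the general characteristic-$0$ case then follows by the Lefschetz principle.
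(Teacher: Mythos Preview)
The paper does not actually prove this theorem: it is stated as background, with the proof attributed to \cite{bog} (surfaces, locally free, $k=\mathbb C$) and \cite{langer} (general case). So there is no ``paper's own proof'' to compare against, and your proposal should be assessed on its own merits.

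Your outline is the classical tensor-power argument and is essentially correct, but there is one genuine soft spot in the auxiliary estimate. You justify the bound $h^0(S,G)\leqslant c\cdot\mathrm{rk}(G)$ by choosing a general $C\in|m'H|$ ``with $m'$ fixed and large'' so that $G|_C$ is semistable, and then apply this to $G=E^{\otimes k}$ for all $k$. But Mehta--Ramanathan (and Flenner) give a threshold $m'$ depending on the sheaf --- in particular on its rank --- so a single $m'$ will not a~priori make all the $E^{\otimes k}|_C$ semistable simultaneously; and a Grauert--M\"ulich bound on $\mu_{\max}(E^{\otimes k}|_C)$ would involve $\mathrm{rk}=r^{2k}$, which is too weak. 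The fix is simple and worth making explicit: apply the restriction theorem \emph{once}, to $E$ (equivalently to $F$), obtaining a fixed smooth curve $C$ with $E|_C$ semistable of degree $0$; then in characteristic $0$ every $(E|_C)^{\otimes k}=E^{\otimes k}|_C$ is semistable of degree $0$, hence $h^0(C,E^{\otimes k}|_C)\leqslant r^{2k}$ (twist by $\mathcal O_C(-p)$ to get slope $<0$). Combined with $h^0(S,E^{\otimes k}(-C))=0$ (semistable of negative slope) this gives $h^0(S,E^{\otimes k})\leqslant r^{2k}$, and the $h^2$ term is handled the same way after finitely many twists by $-C$ to make the slope of $(E^\vee)^{\otimes k}\otimes\omega_S(-jC)$ negative. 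With this adjustment your argument goes through; the alternative analytic route via (approximate) Hermitian--Einstein metrics that you mention at the end is also valid over $\mathbb C$.
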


It is then natural to try to find a characterization of the slope-semistable sheaves with vanishing Bogomolov number. The main result in this direction is the following \cite{nakayama, bruzzo herna} (this is Theorem \ref{caratt} in the Introduction):

\begin{thm}
\label{caratterizzazione dei bandoli con...}
Let $(Y,H)$ be a \emph{complex} polarized variety. Then, for a locally free sheaf $F$ on $Y$, the following are equivalent:
\begin{enumerate}
\item[(i)] $F$ is slope-semistable and has vanishing discriminant {in $H^4(Y,\mathbb Q)$};
\item[(ii)] for every irreducible, non-singular, projective curve $C$, and for every morphism $f:C\to Y$, the pull-back $f^\ast F$ of $F$ to $C$ via $f$ is semi-stable.
\end{enumerate}
\end{thm}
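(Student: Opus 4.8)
The plan is to recast the statement in terms of the \emph{normalized tautological class} on the projectivization of $F$, so that (ii) $\Rightarrow$ (i) becomes a nefness estimate played against the Bogomolov inequality, while (i) $\Rightarrow$ (ii) is reduced, via Jordan--H\"older and the Kobayashi--Hitchin correspondence, to the fact that a projectively flat bundle is curve-semistable. Write $p\colon\mathbb{P}(F)\to Y$ for the bundle of one-dimensional quotients of $F$, let $\xi$ be its tautological quotient line bundle (so $p_\ast\xi^{r-1}=1$, $p_\ast\xi^{r}=\mathrm{c}_1(F)$, $p_\ast\xi^{r+1}=\mathrm{c}_1(F)^2-\mathrm{c}_2(F)$), and set $\xi_{\mathrm{nor}}:=\xi-\tfrac1r\,p^\ast\mathrm{c}_1(F)$, a $\mathbb{Q}$-divisor class. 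Two facts would be recorded first: (a) a routine Segre-class computation gives $p_\ast\bigl(\xi_{\mathrm{nor}}^{\,r+1}\bigr)=-\tfrac1{2r}\,\Delta(F)$; and (b) for any morphism $f\colon C\to Y$ from a smooth irreducible projective curve, $f^\ast F$ is semistable if and only if $\xi_{\mathrm{nor}}$ restricts to a nef class on $\mathbb{P}(f^\ast F)$ (the classical nef criterion for semistability on curves), and since every curve in $\mathbb{P}(F)$ either lies in a fibre, where $\xi$ is very ample, or maps finitely onto a curve in $Y$ and so is dominated by one in some $\mathbb{P}(f^\ast F)$, condition (ii) is equivalent to the nefness of $\xi_{\mathrm{nor}}$ on the whole of $\mathbb{P}(F)$. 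If $\dim Y=1$ the theorem is vacuous in the discriminant and trivial in the semistability, so I assume $n:=\dim Y\ge2$.

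For (ii) $\Rightarrow$ (i): first one shows $F$ is semistable. If it were not, its maximal destabilizing subsheaf would restrict, by the Mehta--Ramanathan theorem, to a destabilizing subsheaf on a general complete-intersection curve of sufficiently high degree, which by Bertini may be taken smooth and irreducible, contradicting (ii). Hence $\Delta(F)\cdot H^{n-2}\ge0$ by the Bogomolov inequality \cite{langer}. On the other hand, (ii) makes $\xi_{\mathrm{nor}}$ nef on $\mathbb{P}(F)$, so intersecting the two nef classes $\xi_{\mathrm{nor}}$ and $p^\ast H$ in complementary degree and using (a) gives $0\le\xi_{\mathrm{nor}}^{\,r+1}\cdot p^\ast H^{\,n-2}=-\tfrac1{2r}\,\Delta(F)\cdot H^{\,n-2}$, whence $\Delta(F)\cdot H^{n-2}=0$. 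Finally, over $\mathbb{C}$ and for $F$ locally free and semistable, the vanishing of $\Delta(F)\cdot H^{n-2}$ upgrades to $\Delta(F)=0$ in $H^4(Y,\mathbb{Q})$, as can be extracted from Theorem~2 of \cite{simp 2}; this is (i).

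For (i) $\Rightarrow$ (ii): take a Jordan--H\"older filtration $0=F_0\subset F_1\subset\cdots\subset F_k=F$ of the semistable bundle $F$, with stable quotients $G_i=F_i/F_{i-1}$, all of slope $\mu(F)$. By downward induction along the filtration, using the additivity of $\Delta(\cdot)/\mathrm{rk}(\cdot)$ along an extension of equal-slope sheaves (where the correction term is a divisor class orthogonal to $H^{n-1}$, hence of non-positive square against $H^{n-2}$ by the Hodge index theorem) and the Bogomolov inequality for each stable factor, the hypothesis $\Delta(F)\cdot H^{n-2}=0$ forces $\Delta(F_i)\cdot H^{n-2}=\Delta(G_i)\cdot H^{n-2}=0$ for all $i$. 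By the Kobayashi--Hitchin correspondence each stable $G_i$ then admits a Hermite--Einstein metric, and the equality case of L\"ubke's inequality shows that the induced connection on $\mathbb{P}(G_i)$ is flat and unitary; in particular $G_i$ is locally free and its projectivization is associated to a projective unitary representation of $\pi_1(Y)$. Hence every $F_i$ is locally free, the filtration is by subbundles, and for any $f\colon C\to Y$ the pulled-back filtration exhibits $f^\ast F$ as an iterated extension of the bundles $f^\ast G_i$, each of which is associated to a projective unitary representation of $\pi_1(C)$, hence polystable, and all of which have the common slope $(\deg f)\,\mu(F)=\mu(f^\ast F)$; since an iterated extension of semistable bundles of equal slope is semistable, $f^\ast F$ is semistable, which is (ii).

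The step I expect to be the real obstacle is the one in (i) $\Rightarrow$ (ii) that passes from the purely numerical vanishing $\Delta(G_i)\cdot H^{n-2}=0$ to the \emph{geometric} conclusion that $G_i$ is projectively flat: this is exactly where one must invoke the complex-analytic machinery --- the Kobayashi--Hitchin correspondence for stable sheaves and the rigidity built into L\"ubke's inequality --- and it is the reason the statement is restricted to the ground field $\mathbb{C}$. A subordinate technical point, which the hypothesis on the discriminant conveniently disposes of, is that the Jordan--H\"older factors, a priori only torsion-free, must be seen to be locally free in order for the filtration to pull back as a filtration by subbundles to an arbitrary curve.
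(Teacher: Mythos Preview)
The paper does not prove this theorem; it is quoted from the literature, with attribution to \cite{nakayama} and \cite{bruzzo herna} (see the remarks preceding Theorem~\ref{caratt} in the Introduction and its restatement as Theorem~\ref{caratterizzazione dei bandoli con...}). There is thus no in-paper proof to compare your proposal against.

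That said, your argument is correct and in fact tracks the two cited sources closely. The direction (ii) $\Rightarrow$ (i), via nefness of the normalized tautological class $\xi_{\mathrm{nor}}$ on $\mathbb{P}(F)$, the Segre identity $p_\ast(\xi_{\mathrm{nor}}^{\,r+1})=-\Delta(F)/(2r)$, and the squeeze against the Bogomolov inequality, is Nakayama's proof; the upgrade from $\Delta(F)\cdot H^{n-2}=0$ to $\Delta(F)=0$ in $H^4(Y,\mathbb{Q})$ via \cite{simp 2} is exactly the step the paper itself singles out. The direction (i) $\Rightarrow$ (ii), via Jordan--H\"older, the Hodge-index splitting of the discriminant among the stable factors, and the passage to projective flatness through Kobayashi--Hitchin and the equality case of L\"ubke's inequality, is the argument of \cite{bruzzo herna}. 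Your identification of the analytic input and of the local-freeness issue for the stable graded pieces is accurate. One cosmetic slip: the expression ``$(\deg f)\,\mu(F)$'' has no meaning for a morphism $f\colon C\to Y$ with $\dim Y\ge 2$; what your Hodge-index step actually delivers, and what you need, is that the classes $\mathrm{c}_1(G_i)/\mathrm{rk}(G_i)$ are all numerically equal, so that their pullbacks to $C$ have the same degree.
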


Following \cite{bruzzo valeriano}, we call a sheaf $F$ satisfying (ii) of Theorem \ref{caratterizzazione dei bandoli con...} \emph{curve-semistable}. Then Theorem \ref{caratterizzazione dei bandoli con...} can be paraphrased by saying that, on a complex polarized variety, the semistable locally free sheaves with vanishing discriminant are exactly the locally free sheaves that are curve-semistable.

\subsection{The Higgs case and the conjecture}

There is a notion of slope-semistabilty adapted to torsion-free Higgs sheaves on a polarized variety $(Y,H)$:
a torsion-free Higgs sheaf $(F,\phi)$ on $Y$ is slope-semistable (with respect to $H$) if it satisfies the inequality \eqref{stab cond}  for {the non-zero subsheaves $S$ of $F$ that} are \emph{invariant} under the action of the Higgs field $\phi$ (i.e., such that the restriction of $\phi$ to $S$ factors through the injection $(S\hookrightarrow F)\otimes\Omega_Y$). Then one has a version of Bogomolov inequality for Higgs sheaves (proved in \cite{simp 1} for \emph{stable} Higgs bundles over \emph{complex} varieties, and in \cite{langer} in the general case):

\begin{thm}[Bogomolov inequality for Higgs sheaves]
Let $(Y,H)$ be a polarized variety, and let $(F,\phi)$ be a torsion-free Higgs sheaf on $Y$. Then, if $(F,\phi)$ is slope-semistable, the Bogomolov number of $F$ is non-negative.
\end{thm}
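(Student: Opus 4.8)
The plan is to follow the route of Simpson and Langer: reduce to a (semi)stable Higgs \emph{bundle} on a smooth projective \emph{surface}, and there invoke the Kobayashi--Hitchin correspondence together with a Chern--Weil computation. Three reductions come first. (a) Since the data $(Y,H,F,\phi)$ is defined over a finitely generated subfield $k_0\subseteq k$ over which Higgs-semistability already holds --- the maximal $\phi$-invariant destabilizing subsheaf being unique, hence Galois-stable, hence defined over $k_0$ --- embedding $k_0$ into $\mathbb{C}$ and base-changing (which preserves the polarization, the Chern classes, and Higgs-semistability) reduces us to $k=\mathbb{C}$. (b) By the Higgs version of the Mehta--Ramanathan restriction theorem \cite{simp 2}, if $n=\dim Y\geqslant 3$ then for $m\gg 0$ and a general complete intersection surface $S=H_1\cap\cdots\cap H_{n-2}$ with $H_i\in|mH|$, the restriction $(F|_S,\phi|_S)$ is a slope-semistable Higgs sheaf on $S$ and the Bogomolov numbers satisfy $\mathrm{B}(F|_S)=m^{n-2}\,\mathrm{B}(F)$; since $m^{n-2}>0$ this reduces us to $n=2$. (c) On the surface $S$ I would replace $F|_S$ by its double dual, which is locally free: the Higgs field extends uniquely to a Higgs field on $(F|_S)^{\vee\vee}$ (both the factorization through $(F|_S)^{\vee\vee}\otimes\Omega_S$ and the integrability $\phi\wedge\phi=0$ extend across the finitely many points where $F|_S$ fails to be reflexive), the pair stays semistable (intersecting a $\phi$-invariant subsheaf of $(F|_S)^{\vee\vee}$ with $F|_S$ yields one of $F|_S$ of the same rank and degree), and $\mathrm{c}_2$ can only decrease under double-dualization, so $\mathrm{B}(F|_S)\geqslant\mathrm{B}((F|_S)^{\vee\vee})$. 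We are thus reduced to proving $\mathrm{B}\geqslant 0$ for a semistable Higgs bundle on a smooth projective surface.

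For a \emph{stable} Higgs bundle $(F,\phi)$ on a compact K\"ahler surface $(Y,\omega)$, Simpson's generalized Kobayashi--Hitchin correspondence \cite{simp 1} furnishes a harmonic (Hermitian--Yang--Mills--Higgs) metric $h$, i.e. one for which the total curvature $G_h:=F_h+[\phi,\phi^{\ast_h}]$ satisfies the Einstein equation $\sqrt{-1}\,\Lambda_\omega G_h=c\cdot\mathrm{id}_F$. I would then represent $\mathrm{c}_1(F)$, $\mathrm{c}_2(F)$, and hence $\Delta(F)=\mathrm{c}_2(\mathit{End}(F))$, by the Chern--Weil forms built from $G_h$, and apply the pointwise algebraic inequality behind the classical L\"ubke estimate: for a Hermitian, pure-type-$(1,1)$, $\mathit{End}(F)$-valued form $G$ satisfying the Einstein condition, $2r\,\mathrm{tr}(G\wedge G)-(r-1)\,\mathrm{tr}(G)\wedge\mathrm{tr}(G)$ equals, up to a positive constant, $|G_0|^2$ times the volume form, where $G_0$ is the trace-free part of $G$. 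The content of the Higgs case is that this survives the replacement of the Chern curvature by $G_h$, because $[\phi,\phi^{\ast_h}]$ is $h$-self-adjoint and of pure type $(1,1)$, so it enters the relevant quadratic form with the sign that keeps it non-negative; integrating over $Y$ then gives $\mathrm{B}(F)\geqslant 0$.

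To remove stability I would use that a slope-semistable Higgs bundle on a projective surface carries an \emph{approximate} Hermitian--Yang--Mills--Higgs structure \cite{62,li}: a family of metrics $h_\varepsilon$ with $\|\sqrt{-1}\,\Lambda_\omega G_{h_\varepsilon}-c\cdot\mathrm{id}\|_{C^0}<\varepsilon$ and with a uniform $L^2$-bound on $G_{h_\varepsilon}$ (forced by the fixed cohomology classes). Running the Chern--Weil computation with $h_\varepsilon$, the error terms --- all of which involve $\Lambda_\omega G_{h_\varepsilon}-c\cdot\mathrm{id}$ --- are $O(\varepsilon)$ uniformly, so $\mathrm{B}(F)\geqslant -C\varepsilon$ with $C$ independent of $\varepsilon$, and letting $\varepsilon\to 0$ finishes the proof. (A purely algebraic variant, working over any $k$ of characteristic $0$ and directly for torsion-free sheaves, is Langer's: after the reduction to surfaces one reduces modulo a large prime, the one genuinely new point being that Higgs-semistability has to be followed through the Frobenius operations, which Langer controls via his bound on the instability of iterated Frobenius pull-backs, applied here to $F$ endowed with its $\mathrm{Sym}\,\Theta_Y$-module structure.)

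The main obstacle is the analytic heart of the stable case: establishing the existence of the (approximate) harmonic metric for a (semi)stable Higgs bundle, and checking that the additional term $[\phi,\phi^{\ast_h}]$ does not spoil the pointwise L\"ubke inequality. This is exactly the step at which the Higgs-bundle case diverges from Bogomolov's original argument, and it is the reason one must pass through Simpson's correspondence --- or, on the algebraic side, through Langer's rather delicate positive-characteristic estimates --- rather than through a direct adaptation of the classical proof.
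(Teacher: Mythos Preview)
The paper does not give its own proof of this theorem: it is stated as background, with the stable complex case attributed to \cite{simp 1} and the general case to \cite{langer} (and \cite{62,li} for the passage from stable to semistable). So there is no ``paper's own proof'' to compare against; what you have written is precisely a sketch of the argument the paper is \emph{citing}.

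That said, your outline is essentially correct and matches the cited route. Two small points worth tightening. First, in the Chern--Weil step you say you will represent the Chern classes by forms ``built from $G_h$''; strictly speaking $G_h=F_h+[\phi,\phi^{\ast_h}]$ is only the $(1,1)$-component of the curvature of the connection $D=\nabla_h+\phi+\phi^\ast$, and it is $D^2$ (which also has the $(2,0)$ and $(0,2)$ pieces $\partial\phi$ and $\bar\partial\phi^\ast$) that represents the Chern classes via Chern--Weil. The actual computation separates $\mathrm{tr}(D^2\wedge D^2)$ into the $G_h$-part and a $\partial\phi\wedge\bar\partial\phi^\ast$ cross term, and one checks that \emph{both} contribute with the correct sign. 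Your heuristic ``$[\phi,\phi^\ast]$ is self-adjoint of type $(1,1)$, so the sign is preserved'' is morally right but skips this decomposition. Second, for the reduction to surfaces you invoke a Higgs Mehta--Ramanathan for restriction to a complete-intersection \emph{surface}; Simpson's statement in \cite{simp 2} is for restriction to curves, so you should either iterate hyperplane sections (checking the Higgs field restricts compatibly at each step) or cite Langer's restriction theorems directly.
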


Once again, it is then natural to try to characterize the torsion-free, slope-semistable Higgs sheaves with vanishing Bogomolov number. In view of Theorem \ref{caratterizzazione dei bandoli con...}, the following statement (this is Conjecture \ref{conjec} from the Introduction) appears to be natural. In it we use the notion of curve-semistability for a Higgs sheaf $(F,\phi)$ on a polarized variety $(Y,H)$: $(F,\phi)$ is \emph{curve-semistable} if, for each pair $(C,f)$, where $C$ is an irreducible, non-singular, projective curve, and $f:C\to Y$ a morphism, the pull-back Higgs sheaf $f^\ast(F,\phi)$ is semistable.

\begin{conj}
\label{conjecture}
Let $(Y,H)$ be a \emph{complex} polarized variety, and let $(F,\phi)$ be a Higgs bundle on $Y$. Then the following are equivalent:
\begin{enumerate}
\item[(i)] $(F,\phi)$ is semistable with vanishing discriminant {in $H^4(Y,\mathbb Q)$};
\item[(ii)] $(F,\phi)$ is curve-semistable.
\end{enumerate}
\end{conj}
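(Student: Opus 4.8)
The implication (i) $\Rightarrow$ (ii) is already known in the stated generality (\cite{bruzzo herna, bruzzo beatriz}), and by the Higgs version of the Mehta--Ramanathan theorem \cite{simp 2} any curve-semistable Higgs bundle is semistable; so the only thing left is the implication \emph{curve-semistability $\Rightarrow$ vanishing discriminant}, and this is what I would attack in the setting of this paper ($X$ a non-isotrivial Weierstrass fibration with nodal fibers over a curve $B$, eventually with $k=\mathbb{C}$). The first step is a reduction: using the coincidence of ordinary and Higgs semistability on rational and elliptic curves, together with pull-back along a multisection, one shows that it is enough to treat Higgs bundles $(V,\phi)$ whose underlying bundle $V$ has vertical --- indeed trivial --- determinant and is fiberwise semistable (this is Proposition \ref{Prop 56}, announced in the Introduction). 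To such a $V$ one attaches, as in Section \ref{due}, its spectral curve $C_V\in|r\Sigma+\pi^\ast\mu|$, with $\deg\mu=e=\mathrm{c}_2(V)$ by Lemma \ref{e uguale c2}.

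The heart of the argument is to force $\phi$ to be \emph{scalar}. I would impose the numerical hypothesis $e\geqslant rd+2g$; then Proposition \ref{base point free} makes the system $|r\Sigma+\pi^\ast\mu|$ base-point free, so by Bertini its general member is smooth, hence integral, and $V$ is then fiberwise regular by Proposition \ref{sem implies reg}. Assuming $C_V$ general enough to lie in this locus, the inequality $e\geqslant rd+2g>(r-1)d+2g-1$ (which uses $d\geqslant 1$) puts us in case (ii) of Proposition \ref{uno dei risultati}, so every Higgs field on $V$ is scalar: $\phi=\phi_\alpha$ for some $\alpha\in H^0(X,\Omega_X)\cong H^0(B,\omega_B)$.

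It then remains to observe that for a scalar Higgs field the Higgs problem collapses to the ordinary one: since $\phi_\alpha$ acts locally by $s\mapsto s\otimes\alpha$, every subsheaf of $V$ is $\phi_\alpha$-invariant, so $(V,\phi_\alpha)$ is (semi)stable if and only if $V$ is; and the same argument applied on $C$, together with the commutative square \eqref{scalardiag} which identifies $f^\ast(V,\phi_\alpha)$ with $(f^\ast V,\phi_{f^\ast\alpha})$, shows that $f^\ast(V,\phi_\alpha)$ is (semi)stable if and only if $f^\ast V$ is, for every $f:C\to X$. Hence $(V,\phi_\alpha)$ is curve-semistable $\iff$ $V$ is curve-semistable as a bundle $\iff$ (by Theorem \ref{caratterizzazione dei bandoli con...}, where $k=\mathbb{C}$ enters) $V$ is semistable with $\Delta(V)=0$ $\iff$ $(V,\phi_\alpha)$ is semistable with vanishing discriminant, the discriminant of a Higgs bundle being that of its underlying bundle. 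This establishes the conjecture for $(V,\phi)$ under the hypotheses above, which is essentially Proposition \ref{conj rel}.

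The main obstacle is precisely the genericity assumption on $C_V$. When the spectral curve is non-reduced or non-integral the route above fails at its crucial step: $V$ can then carry Higgs fields that are not scalar --- the examples following Proposition \ref{propo}, such as $V=\Omega_X$, already exhibit this --- and the comparison of the previous paragraph no longer applies. Proving the conjecture for arbitrary fiberwise-semistable $V$ with vertical determinant would require, for a Higgs bundle with $\Delta(V)\neq 0$, directly producing a curve $f:C\to X$ along which $f^\ast(V,\phi)$ is Higgs-unstable; I expect this to be the genuinely difficult point, and it is the reason the result is stated only for sufficiently general spectral curves.
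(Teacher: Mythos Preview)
The statement is a \emph{conjecture}; the paper does not prove it in full, and neither does your proposal --- which you acknowledge explicitly. What you outline is precisely the paper's partial result, Proposition~\ref{conj rel}: reduce via Proposition~\ref{Prop 56} to fiberwise semistable bundles with vertical determinant, impose $e\geqslant rd+2g$ and genericity of the spectral curve, use Propositions~\ref{base point free}, \ref{sem implies reg}, \ref{uno dei risultati}(ii) to force $\phi$ scalar, and then collapse to Theorem~\ref{caratterizzazione dei bandoli con...}. Your identification of the obstacle (non-reduced or special spectral curves, where non-scalar Higgs fields exist) is exactly the gap the paper leaves open.

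One small inaccuracy: in the reduction step you mention ``pull-back along a multisection''. The paper's proof of Proposition~\ref{Prop 56} does not use a multisection; it passes to the endomorphism bundle $W=V\otimes V^\vee$ (to trivialize the determinant), and if $W$ is unstable on some fiber $X_b$ one pulls back along the normalization $C\to X_b\hookrightarrow X$, where $C$ has genus $\leqslant 1$ so that ordinary and Higgs instability coincide. Otherwise your summary matches the paper's argument.
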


The implication (i) $\Rightarrow$ (ii) of this conjecture {was} proved in \cite{bruzzo beatriz, bruzzo herna}. Moreover, the Higgs version of  the Mehta-Ramanathan theorem \cite{simp 2} implies that a curve-semistable Higgs bundle is semistable. So, what is left to be proved is the statement that a curve-semistable Higgs bundle has vanishing discriminant, or, equivalently, that a Higgs bundle with non-zero discriminant is unstable (i.e., non-semistable) when pulled back to a suitable curve.

{ It has been proved over the last few years} that the conjecture is true for some classes of varieties, including those with nef tangent bundle \cite{bruzzo alessio}, K3 surfaces \cite{bruzzo valeriano}, and, more generally, Calabi-Yau varieties \cite{bruzzo capasso}. Thus, if we restrict ourselves to the case of surfaces, we can consider the conjecture as proved for surfaces of Kodaira dimension $\leqslant 0$. The next case which is then natural to examine is that of surfaces of Kodaira dimension $1$, the so-called honest elliptic surfaces. In the next section, we will use the results on Higgs bundles proved in Section \ref{3} to make some progress in the study of the conjecture in the case of elliptic surfaces.

\subsection{Study of the conjecture on elliptic surfaces}

Assume $k=\mathbb{C}$ in this section. Let $(\pi:X\to B,\Sigma)$ be a non-isotrivial Weierstrass fibration with nodal singular fibers. As always, we denote by   $\mathbb{L}\in\mathrm{Pic}^d(B)$ ($d\geqslant 1$) the fundamental line bundle of $X$, and by $g$ the genus of $B$. Fix a polarization $H$ on $X$.
For each Higgs bundle $\mathcal{V}=(V,\phi)$ on $X$, let us denote by $\mathcal{P}_{\mathcal{V}}$ the following claim: 
\begin{center}
\emph{if $\mathcal V$ is semistable, and $\Delta(V)$ is non-zero, then there exists a pair $(C,f)$, where $C$ is a smooth, irreducible, projective curve, and $f:C\to X$ a morphism, such that the pull-back $f^\ast\mathcal{V}$ is unstable}.
\end{center}
 Then, by our previous remarks, Conjecture \ref{conjecture} holds if and only if claim $\mathcal{P}_\mathcal{V}$ is true for every Higgs bundle $\mathcal{V}$ on $X$. Our first result states that it is enough to check the validity of $\mathcal{P}_\mathcal{V}$ for Higgs bundles $\mathcal{V}$ on $X$ whose underlying locally free sheaf has trivial determinant and is semistable on the fibers of $\pi$:

\begin{Prop}
\label{Prop 56}
Assume claim $\mathcal{P}_{\mathcal{V}}$ to be true for  Higgs bundles $\mathcal{V}=(V,\phi)$ on $X$ such that $V$ is fiberwise semistable and has trivial determinant. Then claim $\mathcal{P}_\mathcal{V}$ is true for every Higgs bundle $\mathcal{V}$ on $X$.
\end{Prop}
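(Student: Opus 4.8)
The plan is to reduce $\mathcal{P}_{\mathcal{V}}$ for an arbitrary Higgs bundle $\mathcal{V}=(V,\phi)$ on $X$ to the case treated in the hypothesis in two independent steps: first arranging that $V$ be fiberwise semistable, and then that its determinant be trivial. We will use only two external facts: that on a smooth, irreducible, complete curve of genus $\leqslant 1$ a Higgs bundle is semistable if and only if its underlying vector bundle is \cite{bruzzo alessio, garcia-prada}; and that semistability of Higgs bundles is preserved under tensor products and duals, hence under $\mathit{End}(-)$, both on $X$ and on an arbitrary smooth projective curve (for $k=\mathbb{C}$ this follows from Simpson's correspondence, cf. \cite{simp 1}, together with the Higgs Mehta--Ramanathan theorem \cite{simp 2}; see also \cite{langer}). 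We will also use that pull-back of Higgs bundles commutes with $\mathit{End}(-)$, and that $\Delta(\mathit{End}(V))=2r^{2}\,\Delta(V)$ for $r=\mathrm{rk}(V)$, since $\mathrm{c}_{1}(\mathit{End}(V))=0$ and $\mathrm{c}_{2}(\mathit{End}(V))=\Delta(V)$.

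\emph{Step 1 (reduction to the fiberwise semistable case).} If $V$ fails to be semistable on a general closed fiber of $\pi$, then we are already done: a general fiber $X_{b}$ is a smooth, irreducible, projective curve of genus one on which $V|_{X_{b}}$ is unstable, so by the first fact above the pull-back $\iota^{\ast}\mathcal{V}=(V|_{X_{b}},\phi|_{X_{b}})$ along the inclusion $\iota\colon X_{b}\hookrightarrow X$ is unstable, and the pair $(X_{b},\iota)$ witnesses $\mathcal{P}_{\mathcal{V}}$. (If one works with the stronger requirement of semistability on every closed fiber, the finitely many remaining fibers are handled in the same way after passing to their normalizations, which are smooth rational curves, using the analogue of that fact on $\mathbb{P}^{1}$.) Hence we may assume from now on that $V$ is fiberwise semistable.

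\emph{Step 2 (reduction to trivial determinant).} Suppose now that $V$ is fiberwise semistable, that $\mathcal{V}$ is semistable with respect to $H$, and that $\Delta(V)\neq 0$; we must produce a curve on which $\mathcal{V}$ becomes unstable. Consider the Higgs bundle $\mathcal{W}:=\mathit{End}(\mathcal{V})$. Its underlying bundle $W=\mathit{End}(V)$ has trivial determinant; it is semistable on a general fiber, being there a tensor product of semistable bundles; one has $\Delta(W)=2r^{2}\,\Delta(V)\neq 0$; and $\mathcal{W}$ is $H$-semistable, since $\mathcal{V}$ is and semistability is stable under $\mathit{End}(-)$. Thus $\mathcal{W}$ satisfies the hypothesis of the Proposition, so $\mathcal{P}_{\mathcal{W}}$ holds; applied to the semistable $\mathcal{W}$, which we have just seen has non-zero discriminant, it provides a smooth, irreducible, projective curve $C$ and a morphism $f\colon C\to X$ such that $f^{\ast}\mathcal{W}$ is unstable. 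But $f^{\ast}\mathcal{W}=\mathit{End}(f^{\ast}\mathcal{V})$, and the $\mathit{End}$ of a semistable Higgs bundle on a curve is again semistable; hence $f^{\ast}\mathcal{V}$ cannot be semistable, so $(C,f)$ witnesses $\mathcal{P}_{\mathcal{V}}$, completing the proof.

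The argument is essentially an assembly of known results, and the only genuinely non-formal ingredient is the stability of $\mathit{End}$ of a semistable Higgs bundle — invoked once on $X$, to know that $\mathit{End}(\mathcal{V})$ is $H$-semistable, and once on the test curve $C$, to deduce instability of $f^{\ast}\mathcal{V}$ from that of $\mathit{End}(f^{\ast}\mathcal{V})$; this is also the step where the assumption $k=\mathbb{C}$ is genuinely needed. On the more elementary side, the only point requiring a little care is Step 1 in the ``every fiber'' formulation, where one has to check that instability of $V$ on a nodal fiber is inherited by the pull-back to its normalization; on a general (smooth, elliptic) fiber there is nothing to verify.
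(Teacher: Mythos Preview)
Your proof is correct and follows essentially the same strategy as the paper's: reduce to the fiberwise semistable, trivial-determinant case by passing to $\mathcal{W}=\mathit{End}(\mathcal{V})=\mathcal{V}\otimes\mathcal{V}^\vee$, using that $\Delta(W)=2r^2\Delta(V)$, that tensor products and duals preserve (Higgs) semistability, and that on curves of genus $\leqslant 1$ Higgs semistability coincides with ordinary semistability.

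The only noteworthy difference is the order of the two reductions. The paper first forms $\mathcal{W}$ and then asks whether $W$ is fiberwise semistable (deducing instability of $V_b$ from instability of $W_b$); you first dispose of the case where $V$ itself fails fiberwise semistability, and then form $\mathcal{W}$ (deducing semistability of $W_b$ from that of $V_b$). These are contrapositive uses of the same tensor-product fact, so the arguments are equivalent. You are in fact slightly more explicit than the paper in one place: you check that $\mathcal{W}$ is $H$-semistable before invoking $\mathcal{P}_{\mathcal{W}}$, which the paper leaves implicit. The point you flag as ``requiring a little care'' --- that instability of $V$ on a nodal fiber passes to the normalization $\mathbb{P}^1$ --- is genuine and true (any torsion-free destabilizing quotient on the nodal curve yields, after pulling back and killing torsion, a destabilizing locally free quotient of $\nu^\ast V$ on $\mathbb{P}^1$); the paper glosses over the same point with the word ``analogously.''
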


\begin{proof}
Let $\mathcal{V}=(V,\phi)$ be a rank $r\geqslant 1$ Higgs bundle on $X$ satisfying $\Delta(V)\neq 0$. Let $\mathcal{W}=(W,\psi)=\mathcal{V}\otimes\mathcal{V}^\vee$ (see Section \ref{operations} for the definition of tensor product of Higgs bundles). Then the sheaf $W= V\otimes V^\vee\simeq \mathit{End}(V)$ is locally free or rank $r^2\geqslant 1$ and trivial determinant, and so
$$
\Delta(W)=2r^2\mathrm{c}_2(W)=2r^2\Delta(V)\neq 0.
$$
If the restriction $W_b\simeq V_b\otimes V_b^\vee$ of $W$ to some closed fiber $X_b$ of $\pi$ is unstable, then so is $V_b$, since it is well-known that the slope-semistability of a bundle is equivalent to that of its dual, and the tensor product of two slope-semistable bundles is slope-semistable; analogously, the pull-back $f^\ast V$ is unstable, where $f:C\to X$ is the composition
$$
C\xrightarrow{\nu}X_b\hookrightarrow X,
$$
$\nu:C\to X_b$ being the normalization of $X_b$. But, since $C$ has genus $\leqslant 1$, the instability of $f^\ast V$ implies that of $f^\ast\mathcal{V}$
\cite{bruzzo alessio, garcia-prada}, and claim $\mathcal{P}_\mathcal{V}$ is true. 

Let us then assume that $W$ is fiberwise semistable. By our assumption, claim $\mathcal{P}_\mathcal{W}$ is true, and hence the pull-back $f^\ast\mathcal{W}\simeq f^\ast\mathcal{V}\otimes f^\ast\mathcal{V}^\vee$ is unstable for some curve $C$ and morphism $f:C\to X$; thus, so is $f^\ast\mathcal{V}$ (by the properties of duals and tensor products of semistable \emph{Higgs bundles} analogous to those valid for semistable bundles), showing that claim $\mathcal{P}_\mathcal{V}$ is true in this case too.
\end{proof}

Let us then focus our attention on Higgs bundles $\mathcal{V}=(V,\phi)$ on $X$ such that $V$ has rank $r\geqslant 2$ and trivial (or, more generally, vertical) determinant, and is fiberwise semistable. Such a $V$ has $\Delta(V)=2r\mathrm{c}_2(V)$, and $\mathrm{c}_2(V)\geqslant 0$ (by Proposition \ref{e uguale c2}). Thus, the assumption $\Delta(V)\neq 0$ {in} claim $\mathcal{P}_\mathcal{V}$ is actually equivalent to $\Delta(V)>0$, or to $\mathrm{c}_2(V)>0$. 
 Using the results from the previous section, we are able to prove that  claim $\mathcal{P}_\mathcal{V}$ is true, as soon as $\mathrm{c}_2(V)$ (or $\Delta(V)$) is  big enough, and the spectral cover of $V$ is sufficiently general. We see this as a strong indication that Conjecture \ref{conjecture} (or, at least, a \emph{generic} version of it) is true.

\begin{Prop}
\label{conj rel}
Let $(r,e)$ be a pair of integers satisfying $r\geqslant 2$ and $e\geqslant rd+2g$. Fix a line bundle $\mu$ of degree $e$ on $B$, and let $C$ be a general element of the linear series $\mathbb{P}:=\left|r\Sigma+\pi^\ast\mu\right|$ on $X$. Let $\mathcal{V}=(V,\phi)$ be a Higgs bundle on $X$, whose underlying locally free sheaf $V$ has rank $r$, vertical determinant, $\mathrm{c}_2(V)=e$, and is fiberwise semistable with spectral curve $C_V=C$. Then claim $\mathcal{P}_\mathcal{V}$ is true.
\end{Prop}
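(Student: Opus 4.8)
The plan is to deduce the statement from the already-established bundle version, Theorem \ref{caratterizzazione dei bandoli con...}, by exploiting the structural results of Section \ref{3}: the key point is that for a general $C$ the Higgs field $\phi$ must be \emph{scalar}, and for a scalar Higgs field both the semistability of $(V,\phi)$ on $X$ and the semistability of all of its curve pull-backs reduce to the corresponding statements for the underlying vector bundle $V$. This turns $\mathcal P_\mathcal V$ into the assertion that a semistable bundle with nonzero discriminant fails curve-semistability, which is exactly (the contrapositive direction of) Theorem \ref{caratterizzazione dei bandoli con...}.

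First I would observe that, since $e=\deg(\mu)\geqslant rd+2g$, Proposition \ref{base point free} applies, so the linear system $\mathbb P=|r\Sigma+\pi^\ast\mu|$ is base-point free; as $X$ is a smooth projective surface over $\mathbb C$, Bertini's theorem guarantees that a general member $C\in\mathbb P$ is smooth. Fix such a $C$. Then the spectral cover $\varphi_V:C_V=C\to B$ of $V$ has non-singular source, so Proposition \ref{sem implies reg} shows that $V$ is fiberwise regular; moreover $C_V$ is reduced, and by Lemma \ref{e uguale c2} one has $\mathrm c_2(V)=e\geqslant rd+2g\geqslant (r-1)d+2g-1$ (using $d\geqslant 1$). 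Hence assumption (ii) of Proposition \ref{uno dei risultati} is satisfied, and we conclude that every Higgs field on $V$, in particular $\phi$, is scalar; write $\phi=\phi_\alpha$ with $\alpha\in H^0(X,\Omega_X)$.

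Next I would use that a scalar field $\phi_\alpha$ leaves every subsheaf of $V$ invariant, so that $(V,\phi_\alpha)$ is semistable if and only if $V$ is; and, by the compatibility \eqref{scalardiag} of scalar fields with pull-back, for every morphism $f:C'\to X$ from a smooth irreducible projective curve the Higgs bundle $f^\ast(V,\phi_\alpha)=(f^\ast V,\phi_{f^\ast\alpha})$ is semistable if and only if $f^\ast V$ is. Assuming now $\mathcal V$ semistable, we get that $V$ is semistable. Since $V$ has vertical determinant, $\mathrm c_1(V)^2=0$, so $\Delta(V)=2r\,\mathrm c_2(V)=2re$, which is positive because $e\geqslant rd+2g\geqslant 2$; in particular the hypothesis $\Delta(V)\neq 0$ of claim $\mathcal P_\mathcal V$ is automatically in force. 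Working over $k=\mathbb C$, Theorem \ref{caratterizzazione dei bandoli con...} then applies: a semistable $V$ with $\Delta(V)\neq 0$ is not curve-semistable, so there is a pair $(C',f)$ with $f^\ast V$ unstable; by the remark above $f^\ast\mathcal V=(f^\ast V,\phi_{f^\ast\alpha})$ is unstable as well, which is precisely claim $\mathcal P_\mathcal V$.

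The only step that requires genuine care is the reduction to the scalar case, i.e.\ making sure that the general $C\in\mathbb P$ is smooth so that Propositions \ref{sem implies reg} and \ref{uno dei risultati} can be invoked; once that is granted, everything else is an assembly of results already in hand (the arithmetic inequality $e\geqslant (r-1)d+2g-1$ being immediate from $e\geqslant rd+2g$ and $d\geqslant 1$), and no input beyond Theorem \ref{caratterizzazione dei bandoli con...} and the vertical/scalar structure theory of Section \ref{3} is needed.
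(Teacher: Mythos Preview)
Your proof is correct and follows essentially the same route as the paper: base-point freeness (Proposition~\ref{base point free}) plus Bertini gives a smooth $C$, Proposition~\ref{sem implies reg} yields fiberwise regularity, Proposition~\ref{uno dei risultati}(ii) forces $\phi$ to be scalar, and then Theorem~\ref{caratterizzazione dei bandoli con...} together with the compatibility \eqref{scalardiag} finishes the argument. The only differences are cosmetic: you spell out the inequality $e\geqslant (r-1)d+2g-1$, the computation $\Delta(V)=2re>0$, and the passage from semistability of $(V,\phi_\alpha)$ to that of $V$, all of which the paper leaves implicit.
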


\begin{proof}
By Proposition \ref{base point free}, the linear series $\mathbb{P}$ is base-point free. Thus, $C$ can be assumed to be smooth (and, in particular, reduced). By Proposition \ref{sem implies reg}, $V$ is fiberwise regular. Point (ii) of Proposition \ref{uno dei risultati} then allows   us to conclude that the Higgs field $\phi$ is scalar. Moreover, by Theorem \ref{caratterizzazione dei bandoli con...}, $V$ is not curve-semistable. We can then pick a curve $D$, and a morphism $f:D\to X$, such that $f^\ast V$ is unstable; it follows that the Higgs bundle $f^\ast\mathcal{V}$ is also unstable by the remarks of Section \ref{scalar Higgs}, in particular the commutativity of diagram \eqref{scalardiag}. Thus $\mathcal{P}_\mathcal{V}$ holds true, as {claimed}.
\end{proof}

\medskip
	 
\medskip

\medskip

\bigskip

\frenchspacing

\end{document}